\numberwithin{equation}{section}
\theoremstyle{plain}
\newtheorem{theorem}{Theorem}[section]
\newtheorem{corollary}[theorem]{Corollary}
\newtheorem{lemma}[theorem]{Lemma}
\newtheorem{proposition}[theorem]{Proposition}
\theoremstyle{definition}
\newtheorem{definition}[theorem]{Definition}
\theoremstyle{remark}
\newtheorem{assumption}[theorem]{Assumption}
\newtheorem{remark}[theorem]{Remark}
\newcommand{\lmt}[2]{\mathop{\lim}_{{#1} \rightarrow {#2}} }
\newcommand{\lip}[1]{{\mathrm{lip}}({#1})}
\newcommand{\lmts}[2]{\mathop{\overline{\lim}}_{{#1} \rightarrow {#2}} }
\newcommand{\tr}{{\rm{tr}}}
\renewcommand{\H}{{\mathrm{Hess}}}
\newcommand{\mm}{\mathfrak m}
\newcommand{\ms}{(X,\d,\mm)}
\newcommand{\cdkn}{{\rm CD}(K, N)}
\newcommand{\rcdkn}{{\rm RCD}(K, N)}
\newcommand{\rcd}{{\rm RCD}(K, \infty)}
\newcommand{\be}{{\rm BE}(K, \infty)}
\newcommand{\bekn}{{\rm BE}(K, N)}
\newcommand{\BE}{{\rm BE}}
\newcommand{\B}{\mathrm{B}}
\newcommand{\E}{\mathcal{E}}
\renewcommand{\P}{\mathrm{P}}
\newcommand{\N}{\mathbb{N}}
\newcommand{\V}{\mathbb{V}}
\newcommand{\R}{\mathbb{R}}
\newcommand{\llip}{{\mathrm{lip}}}
\newcommand{\supp}{\mathop{\rm supp}\nolimits}   
\newcommand{\Lip}{\mathop{\rm Lip}\nolimits}
\newcommand{\loc}{{\rm loc}}
\renewcommand{\d}{{\mathrm d}}
\newcommand{\D}{\mathrm{D}}
\newcommand{\dbdelta}{{\rm D}({\bf \Delta})}
\newcommand{\restr}[1]{\lower3pt\hbox{$|_{#1}$}}
\newcommand{\la}{{\langle}}
\newcommand{\ra}{{\rangle}}
\newcommand{\nchi}{{\raise.3ex\hbox{$\chi$}}}
\title{\large{\bf Curvature-dimension conditions  
under time change}
}
\begin{document}
\author{Bang-Xian Han\thanks{Technion-Israel Institute of Technology, hanbangxian@gmail.com.
}\and Karl-Theodor Sturm\thanks
{Universit\"{a}t Bonn,   sturm@uni-bonn.de }
}

\date{}
\maketitle

\begin{abstract}
We derive precise transformation formulas for synthetic lower Ricci bounds under time change. 
More precisely,  for local Dirichlet forms we study how
the curvature-dimension condition in the sense of Bakry-\'Emery will transform under time change. Similarly,  for metric measure spaces we
study how the curvature-dimension condition in the sense of Lott-Sturm-Villani will transform under time change.
\end{abstract}

\textbf{Keywords}: metric measure space, curvature-dimension condition, time change, Bakry-\'Emery theory, Dirichlet form.\\
\tableofcontents

\section{Introduction}
\paragraph{A.}
Bakry and \'Emery \cite{BE-D} formulated a powerful criterion for obtaining equilibration and regularity results for the Markov semigroups associated with local Dirichlet forms. Let us briefly recall their concept.
A Dirichlet form    $\E$, densely defined  on some $L^2(X,\mm)$, satisfies the    BE$(k,N)$ condition with some function $k \in L_\loc^\infty(X,\mm)$ and some  number $N\in [1,\infty]$   if 
\begin{equation}\label{eq1-intro}
\frac12 \int \Gamma(f) \Delta \varphi\,\d\mm-\int \Gamma(f, \Delta f) \varphi\,\d\mm \geq \int \Big (k\, \Gamma(f)+ \frac1N (\Delta f)^2 \Big )\varphi\,\d\mm.
\end{equation}
for all suitable functions $f$ and $\varphi\ge0$ on $X$. Here $\Delta$ denotes the generator associated with $\E$ and $\Gamma$ the carr\'e du champ operator. Estimate  \eqref{eq1-intro} can be regarded as an 
 abstract formulation of  Bochner's inequality on Riemannian manifolds. Thus, in this Eulerian approach to curvature-dimension conditions, $k(x)$ will be considered as  a synthetic lower bound for the ``Ricci curvature at $x\in X$'' and $N$  as an upper bound for the ``dimension'' of $X$.

From the very beginning of this theory, the transformation formula for the Bakry-\'Emery condition $\BE(k,N)$ under {\em drift transformation} played  a key role. Most importantly in the case $N=\infty$, this states that the Dirichlet form 
$$\E^*(u):=\int\Gamma(u)\,\d\mm^*\text{ on }L^2(X,\mm^*)\text{ with }\mm^*:=e^{-V}\,\mm$$
satisfies $\BE(k^*,\infty)$ with $k^*:=k+h_V$ where
$h_V(x):=\inf_f \frac1{\Gamma(f)}\big[\Gamma\big(\Gamma(V,f),f\big)-\frac12\Gamma\big(\Gamma(f),V\big)\big]$
denotes the lower bound for the Hessian of $V$ at $x\in X$ for any sufficiently regular function $V$ on $X$.

The goal of this paper now is to analyze the transformation property of the Bakry-\'Emery condition under {\em time change}. That is, we will pass from the original Dirichlet form $\E$ on $L^2(X,\mm)$ to a new one defined as
$$\E'(u):=\int \Gamma(u)\,\d\mm\text{ on }L^2(X,\mm')\text{ with }\mm':=e^{2w}\,\mm$$
for some $w\in L^\infty_\loc(X,\mm)$.
Our main result provides a Bakry-\'Emery condition for this transformed Dirichlet form provided the original Dirichlet form satisfies a Bakry-\'Emery condition with finite $N$.

\begin{theorem}\label{Thm A} Assume that  $\E$ satisfies  the ${\rm BE}(k,N)$  condition for some  $k \in L^\infty_\loc$ and some  $N\in [1,\infty)$, and that  $w\in  {\rm D}_\loc({ \bf \Delta}) \cap L_\loc^\infty$  with ${\bf \Delta} w={\bf \Delta}_{sing} w+{ \Delta}_{ac} w \, \mm$ and ${\bf \Delta}_{sing} w  \leq 0$. 
Then for any $N'\in (N, \infty]$ and $k' \in L_\loc^\infty$, the time-changed Dirichlet form  $\E'$ on $L^2(X,\mm')$ satisfies the ${\rm BE}(k', N')$ condition  provided
\begin{equation}
k'\le e^{-2w}\Big [k- \frac{(N-2)(N'-2)}{N'-N} \Gamma(w)- {\Delta}_{ac} w  \Big]. 
\end{equation}
%
\end{theorem}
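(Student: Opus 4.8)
The plan is to compute the $\Gamma_2$‑operator of the time‑changed form explicitly and then feed in the hypothesis through its self‑improvement. Because $\E'$ carries the same energy as $\E$, only the reference measure being replaced by $\mm'=e^{2w}\mm$, the associated objects transform transparently: the carré du champ rescales as $\Gamma'=e^{-2w}\Gamma$, the pointwise Laplacian as $\Delta'=e^{-2w}\Delta$, and the measure‑valued Laplacian of $\E'$ literally equals that of $\E$ (since $\E'(\varphi,g)=\E(\varphi,g)$ does not involve the reference measure). Applying the Leibniz and chain rules for $\mathbf{\Delta}$ to $e^{-2w}\,\Gamma(f)$ and to $e^{-2w}$, together with $\Gamma(e^{-2w},\Gamma(f))=-2e^{-2w}\Gamma(w,\Gamma(f))$, one first derives, for $f$ in a suitable test‑function algebra, the transformation formula for the $\Gamma_2$‑measure $\mathbf{\Gamma}_2'(f)$ of $\E'$:
\[
\mathbf{\Gamma}_2'(f)=e^{-2w}\Big(\mathbf{\Gamma}_2(f)-\Gamma(f)\,\mathbf{\Delta}w+\big[\,2\Gamma(f)\Gamma(w)-2\Gamma(w,\Gamma(f))+2\Delta f\,\Gamma(w,f)\,\big]\mm\Big).
\]
I would cross‑check this against the Riemannian model in dimension $n=2$, where a time change is a genuine conformal change of metric and the formula reduces to the classical conformal transformation of $|\Hess f|^2$ together with $\widetilde{\mathrm{Ric}}=-\Delta w\,\mathrm g$.

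The singular‑part hypothesis then enters trivially: from $\mathbf{\Delta}w=\mathbf{\Delta}_{sing}w+\Delta_{ac}w\,\mm$ with $\mathbf{\Delta}_{sing}w\le0$ and $\Gamma(f)\ge0$ one gets $-\Gamma(f)\,\mathbf{\Delta}w\ge-\Gamma(f)\Delta_{ac}w\,\mm$. Substituting the desired estimate $\mathbf{\Gamma}_2'(f)\ge k'\Gamma'(f)\,\mm'+\tfrac1{N'}(\Delta'f)^2\mm'$ into the transformation formula, using $\mm'=e^{2w}\mm$, and taking the extremal weight $k'e^{2w}=k-\tfrac{(N-2)(N'-2)}{N'-N}\Gamma(w)-\Delta_{ac}w$ (so that the $\Delta_{ac}w$‑terms cancel), I would reduce the statement to the measure inequality
\[
\mathbf{\Gamma}_2(f)-k\Gamma(f)\,\mm-\tfrac1{N'}(\Delta f)^2\,\mm\ \ge\ \Big[\,2\Gamma(w,\Gamma(f))-2\Delta f\,\Gamma(w,f)-2\Gamma(f)\Gamma(w)-\tfrac{(N-2)(N'-2)}{N'-N}\Gamma(w)\Gamma(f)\,\Big]\mm .
\]
Here $\nu:=\mathbf{\Gamma}_2(f)-k\Gamma(f)\,\mm-\tfrac1N(\Delta f)^2\,\mm$ is a nonnegative measure by $\mathrm{BE}(k,N)$, so the left‑hand side equals $\nu+\big(\tfrac1N-\tfrac1{N'}\big)(\Delta f)^2\mm$; the right‑hand side is conveniently rewritten through the traceless quantity $\Theta:=\tfrac12\Gamma(w,\Gamma(f))-\tfrac1N\Delta f\,\Gamma(w,f)$, using $2\Gamma(w,\Gamma(f))-2\Delta f\,\Gamma(w,f)=4\Theta-\tfrac{2(N-2)}{N}\Delta f\,\Gamma(w,f)$.

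The crux — and the step I expect to require the most care — is the completion of the square, whose key input is the self‑improved Bochner inequality available under $\mathrm{BE}(k,N)$. Writing $\nu=\gamma\,\mm+\nu_{\mathrm{sing}}$ with $\gamma\ge0$, $\nu_{\mathrm{sing}}\ge0$, the self‑improvement delivers the directional bound $\Theta^2\le\tfrac{N-1}{N}\,\Gamma(w)\Gamma(f)\,\gamma$ holding $\mm$‑a.e. (with harmless caveats at points where $\Gamma(f)$ or $\Gamma(w)$ vanishes, since the corresponding terms then vanish too). Combining it with $|\Gamma(w,f)|\le\Gamma(w)^{1/2}\Gamma(f)^{1/2}$ and writing $a=\gamma^{1/2}$, $b=|\Delta f|$, $c=\Gamma(w)^{1/2}\Gamma(f)^{1/2}$ (the singular part $\nu_{\mathrm{sing}}$ only helping), the required inequality follows once the quadratic form
\[
a^2+\Big(\tfrac1N-\tfrac1{N'}\Big)b^2+\Big(2+\tfrac{(N-2)(N'-2)}{N'-N}\Big)c^2-4\sqrt{\tfrac{N-1}{N}}\,ac-\tfrac{2|N-2|}{N}\,bc
\]
is nonnegative on $a,b,c\ge0$. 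A computation of the associated $3\times3$ matrix shows its determinant vanishes \emph{exactly}, the governing identity being
\[
\Big(\tfrac1N-\tfrac1{N'}\Big)\Big(2+\tfrac{(N-2)(N'-2)}{N'-N}-\tfrac{4(N-1)}{N}\Big)=\tfrac{(N-2)^2}{N^2},
\]
which is precisely what pins down the constant $\tfrac{(N-2)(N'-2)}{N'-N}$; the remaining principal minors are nonnegative exactly because $N'>N$ (they degenerate as $N'\downarrow N$), so the form is positive semidefinite.

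It then remains to dispose of the routine matters: the case $N'=\infty$ (drop the $(\Delta'f)^2$‑term and pass to the limit $N-2$ in the constant), the fact that an estimate proved for the extremal $k'$ holds a fortiori for smaller $k'$, the localization forced by the $L^\infty_\loc$ and ${\rm D}_\loc(\mathbf{\Delta})$ hypotheses, and the extension from the test‑function algebra to the full domain on which $\mathrm{BE}(k',N')$ is formulated.
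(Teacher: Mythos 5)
Your argument is logically sound and reaches the same transformation formula, the same cancellation of the absolutely continuous part of $\mathbf{\Delta}w$, and the same constant $\tfrac{(N-2)(N'-2)}{N'-N}$ via the same mechanism of a degenerate (determinant-zero) quadratic form. It is, however, organized around a different key lemma than the paper's. The paper (Theorem~\ref{th-weakbe} together with Lemma~\ref{lemma:timechangehessian}) never isolates a scalar ``directional'' inequality: it works directly with the matrix $H_{ij}=(\H_f)_{ij}-w_if_j-w_jf_i+\Gamma(f,w)\delta_{ij}$ in the local orthonormal frame from Proposition~\ref{decomposition}, and uses $\|H\|_{\rm HS}^2\ge \tfrac1{\dim_{\rm loc}}(\tr H)^2$ followed by $\tfrac{a^2}{m}+\tfrac{b^2}{n}\ge\tfrac{(a+b)^2}{m+n}$, plus a separately-checked positive semi-definite remainder $(IV)$. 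You instead first extract a scalar ``directional Bochner'' estimate
$\Theta^2\le \tfrac{N-1}{N}\,\Gamma(w)\Gamma(f)\,\gamma$ (with $\Theta=\H_f(\nabla w,\nabla f)-\tfrac1N\Delta f\,\Gamma(w,f)$ and $\gamma$ the density of the a.c.\ part of the Bochner defect $\nu$) and only then do a $3\times3$ Gram-matrix check in the scalar variables $\sqrt{\gamma}$, $|\Delta f|$, $\sqrt{\Gamma(w)\Gamma(f)}$. Both packagings yield the same algebra: your determinant identity $\bigl(\tfrac1N-\tfrac1{N'}\bigr)\bigl(2+\tfrac{(N-2)(N'-2)}{N'-N}-\tfrac{4(N-1)}{N}\bigr)=\tfrac{(N-2)^2}{N^2}$ is correct, the two $2\times2$ minors reduce to $\tfrac{N'-N}{NN'}$ and $\tfrac{(N-2)^2N'}{N(N'-N)}$, both nonnegative precisely because $N'>N$.

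The one place you should not wave your hands is the directional bound itself: it is true, but it is not an off-the-shelf corollary of $\mathrm{BE}(k,N)$, and deriving it requires exactly the Hessian machinery the paper deploys. Concretely, you need the pointwise self-improved form (the paper's Proposition~\ref{prop:finitedim}) $\gamma\ge |\H_f|_{\rm HS}^2+\tfrac1{N-n}(\tr\H_f-\Delta f)^2-\tfrac1N(\Delta f)^2$ with $n=\dim_{\rm loc}$, then split $\H_f=M+\tfrac{\tr\H_f}{n}\mathrm{Id}$ with $M$ traceless, use $\|M\|_{\rm op}\le\sqrt{\tfrac{n-1}{n}}\|M\|_{\rm HS}$, and complete a square in $(\|M\|_{\rm HS},\,|\tfrac{\tr\H_f}{n}-\tfrac{\Delta f}{N}|)$ whose discriminant vanishes exactly; only then does the $\tfrac{N-1}{N}$ emerge (with $N$, not $n$). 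This is essentially the content of the paper's Lemma~\ref{lemma:timechangehessian}, so you have moved the pointwise local-coordinate computation rather than avoided it. You should also be explicit that the inequality must be proved separately on each dimensional stratum $E_n$, including $\mm(E_n)=0$ for $n>N$, and for $n<N$ the extra trace term in $\gamma$ is what absorbs the cross term $s\,\Gamma(w,f)$; without that term the constant would come out wrong. Finally, the localization and approximation issues you defer (``routine matters'') are genuinely the bulk of the paper's Theorem~\ref{th:bekn} proof, in particular extending from ${\rm TestF}_\loc(\E)$ to the class ${\rm D}_\loc(\Delta^w)\cap L^\infty_\loc$ with $\Delta^w f\in\V_\loc^\infty$ via Lemma~\ref{2diff}, so they deserve at least a sentence acknowledging the needed convergence of $\Gamma^w\bigl(\Gamma^w(P_{1/n}f),\varphi\bigr)$.
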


\begin{corollary} If in addition $k'$ is bounded from below, say $k'\ge K'$ for some $K'\in \R$, then the time changed Dirichlet form $\E'$ and the associated heat semigroup $(\P'_t)_{t\ge0}$ satisfy the  following gradient estimate
\begin{equation}
 \Gamma'(P'_t f) +\frac{1-e^{-2K't}}{N'K'}  (\Delta' P'_t f)^2\leq e^{-2K't} P'_t \big (\Gamma'(f)\big ).
\end{equation}
\end{corollary}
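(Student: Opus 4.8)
The plan is to reduce the statement to the classical self-improvement of a \emph{constant} Bakry--\'Emery bound into a dimensional gradient estimate. First I would observe that, since $k'\ge K'$ holds $\mm'$-a.e.\ and $\Gamma'(f)\ge 0$, $\varphi\ge 0$, the defining inequality \eqref{eq1-intro} for $\BE(k',N')$ immediately implies the corresponding one with $k'$ replaced by the constant $K'$; hence, by Theorem~\ref{Thm A}, the time-changed form $\E'$ satisfies $\BE(K',N')$ on $L^2(X,\mm')$. It then remains to invoke the well-known fact that a Dirichlet form with carr\'e du champ satisfying a constant condition $\BE(K',N')$ has a heat semigroup obeying the asserted estimate (Bakry--\'Emery--Ledoux; for the Dirichlet-form / metric-measure version see e.g.\ Ambrosio--Gigli--Savar\'e, Savar\'e, Erbar--Kuwada--Sturm), applied with the primed data.

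For completeness I would recall the interpolation argument. Fix $t>0$ and $f$ in a suitable core, and for $s\in[0,t]$ set $g_s:=P'_{t-s}f$ and $\Phi(s):=e^{-2K's}P'_s\big(\Gamma'(g_s)\big)$. Differentiating in $s$ --- using $\tfrac{\d}{\d s}g_s=-\Delta'g_s$, the chain rule $\tfrac{\d}{\d s}\Gamma'(g_s)=-2\,\Gamma'(g_s,\Delta'g_s)$ and $\tfrac{\d}{\d s}P'_s=\Delta'P'_s$ --- gives
\begin{equation}
\Phi'(s)=e^{-2K's}\,P'_s\!\Big(\Delta'\Gamma'(g_s)-2\,\Gamma'(g_s,\Delta'g_s)-2K'\,\Gamma'(g_s)\Big).
\end{equation}
Since $P'_s$ is positivity preserving and self-adjoint on $L^2(X,\mm')$, I would then plug in the integrated $\BE(K',N')$ inequality for $g_s$ and use Jensen, $P'_s\big((\Delta'g_s)^2\big)\ge\big(P'_s\Delta'g_s\big)^2=(\Delta'P'_tf)^2$, to obtain $\Phi'(s)\ge\tfrac{2}{N'}e^{-2K's}(\Delta'P'_tf)^2$. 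Integrating over $[0,t]$, together with $\Phi(0)=\Gamma'(P'_tf)$, $\Phi(t)=e^{-2K't}P'_t(\Gamma'(f))$ and $\int_0^t e^{-2K's}\,\d s=\tfrac{1-e^{-2K't}}{2K'}$ (read as $t$ if $K'=0$, and the coefficient $\tfrac{1-e^{-2K't}}{N'K'}$ as $0$ if $N'=\infty$), would yield
\begin{equation}
e^{-2K't}\,P'_t\big(\Gamma'(f)\big)-\Gamma'(P'_tf)\ \ge\ \frac{1-e^{-2K't}}{N'K'}\,(\Delta'P'_tf)^2 ,
\end{equation}
and the general case $f\in L^2(X,\mm')$ follows by approximation.

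I expect the only genuine obstacle to be the functional-analytic bookkeeping in the interpolation step: verifying that $s\mapsto\Phi(s)$ is absolutely continuous, that $P'_{t-s}f$ stays in a class where the $\Gamma'$-chain rule applies, and --- crucially --- that the \emph{weak} (integrated) form of $\BE(K',N')$ delivered by Theorem~\ref{Thm A} suffices to push the Bochner inequality through $P'_s$. This is precisely the content of the standard equivalence ``$\BE(K,N)\Leftrightarrow$ dimensional gradient estimate'' in the Dirichlet-form setting, so once the reduction to constant $K'$ is in place the conclusion follows.
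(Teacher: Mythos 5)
Your proof is correct and follows essentially the same route as the paper: first pass from $\BE(k',N')$ with $k'\ge K'$ to the constant condition $\BE(K',N')$ (equivalently, note that the constant $K'$ already satisfies the pointwise hypothesis of Theorem~\ref{Thm A}, so that theorem directly yields $\BE(K',N')$), and then invoke the standard equivalence between $\BE(K',N')$ and the dimensional gradient estimate; the paper simply cites \cite{EKS-O} for this last step, whereas you reproduce the classical interpolation argument with $\Phi(s)=e^{-2K's}P'_s(\Gamma'(P'_{t-s}f))$, differentiation, and Jensen, which is exactly the content of that reference.
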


\begin{remark}
Generator and carr\'e du champ operator of the time-changed Dirichlet form $\E'$ on $L^2(X,\mm')$ are given by
\[
\Delta'=e^{-2w} \Delta, ~~ \Gamma'=e^{-2w} \Gamma.
\]
Moreover, 
the associated Brownian motion $(\mathbb P'_x, \B'_t)$  (c.f. Chapter 6 \cite{FOT}) is given by $\mathbb P'_x=\mathbb P_x$ and
\begin{equation}\label{time-change:BM}
\B'_t=\B_{\tau_t},\quad \tau_t=\int_0^t e^{-2w(\B'_s)}ds,\qquad  \sigma_t=\int_0^t e^{2w (\B_s)}\, \d s,\quad
\B_t=\B'_{\sigma_t}.
\end{equation}
Note that heat semigroup $(P'_t)_{t\ge0}$ and Brownian motion $(\mathbb P'_x, \B'_t)$ are linked to each other by
$$P'_t f(x)
=\mathbb E'_x[f(\B'_{2t})].$$ 
\end{remark}

\paragraph{B.}
A different approach, the so-called Lagrangian approach, to synthetic lower Ricci bounds was proposed in the works of  Lott, Villani \cite{Lott-Villani09} and Sturm \cite{S-O1}. 
Here the objects under consideration are metric measure spaces. Such a space  $(X,\d,\mm)$ satisfies  the curvature-dimension condition CD$(K,\infty)$ -- meaning that its  Ricci curvature is bounded from below by $K$ --
if the Boltzmann entropy $\operatorname{Ent}(.,\mm)$ is weakly $K$-convex on the Wasserstein space ${\mathcal P}_2(X)$. More refined curvature-dimension conditions CD$(K,N)$ and $\text{CD}^*(K,N)$ with finite $N\in[1,\infty)$ were introduced in \cite{S-O2} and \cite{S-O1}. Combined with the requirement of Hilbertian energy functional,
this led to the conditions RCD$(K,N)$ and $\text{RCD}^*(K,N)$ \cite{AGS-M}, which fortunately turned out to be equivalent to each other \cite{Cavalletti-Milman16}.

Also from the very beginning of this theory, the transformation formula for the curvature-dimension conditions CD$(K,N)$, $\text{CD}^*(K,N)$ RCD$(K,N)$ under {\em drift transformation} played  a key role. Most easily formulated in the case $N=\infty$, it states that the condition CD$(K,\infty)$ for  a given metric measure space $(X,\d,\mm)$ and the $L$-convexity of $V$ on $X$ imply the condition CD$(K+L,\infty)$ for  the transformed  metric measure space $(X,\d,e^{-V}\mm)$. The same holds with RCD in the place of CD.

Subject of the investigations in this paper is the {\em time-changed metric measure space} $(X,\d',\mm')$ where $\mm'=e^{2w}\mm$ for some $w\in L^\infty_\loc(X,\mm)$ and 
\begin{equation*}
\d'(x, y):=\sup \big \{\phi(x)-\phi(y): \phi \in {\rm D}_\loc(\E) \cap C(X),  |\D\phi| \leq e^w \ \mm\text{-a.e. in}~ X \big \}
\end{equation*}
for $x, y \in X$.
Assuming that $w$ is continuous $\mm$-a.e.~on $X$ this allows for a dual representation as
\begin{equation*}
\d'(x, y)=\inf\Big\{\int_0^1 e^{\bar w (\gamma_s)}\, |\dot\gamma_s|\,\d s:  \gamma \in {\rm AC}([0,1], X), \gamma_0=x,\gamma_1=y\Big\}
\end{equation*}
where 
$\bar w(x):=\limsup_{y\to x}w(y)$ denotes the  upper semicontinuous envelope of  $w$. Our main result provides the transformation formula for the curvature-dimension condition under time change.

\begin{theorem}\label{Thm B}
Let  $\ms$ be a $\rcdkn$ space and let $w\in  {\rm D}_\loc({\bf \Delta})\cap  L^\infty_\loc(X)$ be continuous  $\mm$-a.e.~with ${\bf \Delta} w={\bf \Delta}_{sing} w+{ \Delta}_{ac} w \, \mm$ and ${\bf \Delta}_{sing} w  \leq 0$.  
Then the time-changed  metric measure space $(X, \d', \mm')$ satisfies the
 ${\rm RCD}(K', N')$ condition
 for any  $N'\in (N, +\infty]$ and $K'\in\R$ such that
\[K'\le
 e^{-2w}\Big [K- \frac{(N-2)(N'-2)}{N'-N} |\D w|^2- { \Delta}_{ac} w \Big].
\]
\end{theorem}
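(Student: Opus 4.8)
The strategy is to transfer Theorem~\ref{Thm B} to the Eulerian side, apply Theorem~\ref{Thm A}, and transfer back. Since $\ms$ is $\rcdkn$ it is infinitesimally Hilbertian, its Cheeger energy $\E(u)=\int\Gamma(u)\,\d\mm$ is a strongly local Dirichlet form with carr\'e du champ $\Gamma$, and by the equivalence between the Lott--Sturm--Villani and the Bakry--\'Emery formulations of lower Ricci bounds on such spaces, $\E$ satisfies ${\rm BE}(K,N)$. Thus the hypotheses of Theorem~\ref{Thm A} hold with $k\equiv K\in L^\infty_\loc$ and the given $w$. Observing that on the original space $\Gamma(w)=|\D w|^2$ $\mm$-a.e., Theorem~\ref{Thm A} yields that the time-changed Dirichlet form $\E'$ on $L^2(X,\mm')$ (for which $\Gamma'=e^{-2w}\Gamma$ and $\E'(u)=\int\Gamma(u)\,\d\mm$) satisfies ${\rm BE}(K',N')$ for every $N'\in(N,\infty]$ and every constant $K'$ with
\[
K'\le e^{-2w}\Big[K-\frac{(N-2)(N'-2)}{N'-N}|\D w|^2-\Delta_{ac}w\Big],
\]
which is exactly the bound in the statement.

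It remains to see that this Bakry--\'Emery bound for $\E'$ is a Bakry--\'Emery bound for the Cheeger energy of the time-changed metric measure space $(X,\d',\mm')$. First, $\d'$ is precisely the intrinsic distance of $\E'$: since $\Gamma'=e^{-2w}\Gamma$, a function $\phi$ satisfies $\Gamma'(\phi)\le1$ $\mm'$-a.e.\ if and only if $\Gamma(\phi)\le e^{2w}$, that is $|\D\phi|\le e^w$ $\mm$-a.e., so the supremum defining $\d'$ in the excerpt coincides with the intrinsic distance of $\E'$. Because $w\in L^\infty_\loc$ and is $\mm$-a.e.\ continuous, $\d'$ is locally bi-Lipschitz equivalent to $\d$, hence induces the original topology, while the length-type representation of $\d'$ through the upper semicontinuous envelope $\bar w$ shows that $(X,\d')$ is a length space (and, modulo the usual completeness and local-compactness bookkeeping, geodesic) and that $\mm'$ is a Borel measure, finite on bounded sets, with at most exponential growth. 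The general correspondence between a strongly local Dirichlet form with carr\'e du champ and the metric measure structure generated by its intrinsic distance then gives that the Cheeger energy $\mathrm{Ch}'$ of $(X,\d',\mm')$ coincides with $\E'$; in particular $(X,\d',\mm')$ is infinitesimally Hilbertian, since $\E'$ is quadratic.

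Granting $\mathrm{Ch}'=\E'$, the proof closes by the characterization of $\rcdkn$ spaces through Bochner's inequality: an infinitesimally Hilbertian metric measure space whose Cheeger energy satisfies ${\rm BE}(K',N')$, which has the Sobolev-to-Lipschitz property and whose measure grows at most exponentially, is an ${\rm RCD}(K',N')$ space (see \cite{AGS-M, Cavalletti-Milman16} and the references therein). The Sobolev-to-Lipschitz property for $(X,\d',\mm')$ is built into the definition of $\d'$: any $f$ in the domain of $\E'$ with $\Gamma'(f)\le1$ $\mm'$-a.e.\ is an $\mm$-a.e.\ competitor in the supremum defining $\d'$, hence has a $1$-Lipschitz representative with respect to $\d'$. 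This yields ${\rm RCD}(K',N')$ for $(X,\d',\mm')$, which is the assertion.

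Since the curvature computation is entirely absorbed into Theorem~\ref{Thm A}, the main obstacle lies in the middle step --- establishing that $\d'$ is a bona fide distance inducing the right topology and, crucially, that the Cheeger energy of $(X,\d',\mm')$ is $\E'$ rather than an a priori strictly larger functional. This identification needs the $\mm$-a.e.\ continuity of $w$ in order to pass from $\d'$ to its length representation via $\bar w$, some care with the mismatch between $\d$-bounded and $\d'$-bounded sets when transferring local boundedness and volume-growth information, and the identification of the minimal weak upper gradient of $(X,\d',\mm')$ with $\sqrt{\Gamma'}$ --- essentially the same package of facts that underlies the construction of the Lagrangian theory on $\rcd$ spaces.
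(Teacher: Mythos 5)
Your overall strategy matches the paper's: pass from RCD$(K,N)$ to BE$(K,N)$ for the Cheeger energy, apply Theorem~\ref{Thm A} to obtain BE$(K',N')$ for $\E'$, identify $\E'$ with the Cheeger energy of $(X,\d',\mm')$ via Lemma~\ref{Gamma^w}, and close by the BE--to--RCD direction of the equivalence, which requires the Sobolev-to-Lipschitz property and squared-exponential volume growth. However, both of these last two ingredients are left in an incomplete state, and one of them is argued circularly.

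The Sobolev-to-Lipschitz claim is not ``built into the definition of $\d'$.'' The supremum in the definition of $\d'$ is taken over \emph{continuous} functions $\phi\in\V_\loc\cap C(X)$ with $|\D\phi|\le e^w$. An arbitrary Sobolev function $f$ with $\Gamma'(f)\le 1$ (equivalently $|\D f|\le e^w$ $\mm$-a.e.) has no a priori continuous representative, so it is not a competitor in that supremum; asserting that it is exactly begs the question the Sobolev-to-Lipschitz property is meant to answer. The paper's Lemma~\ref{SobLip} avoids this by truncating to a bounded set, using $w\in L^\infty_\loc$ to get $\Gamma(f_B)\le C$ for the truncation, and then invoking the Sobolev-to-Lipschitz property of the \emph{original} space $(X,\d,\mm)$ to produce a Lipschitz (hence continuous) representative $\bar f_B$, which at that point \emph{is} a competitor. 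That reduction to the original space's property is the essential step you are missing.

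The volume-growth bound is acknowledged (``modulo the usual completeness and local-compactness bookkeeping,'' ``some care with the mismatch between $\d$-bounded and $\d'$-bounded sets'') but not established. For $w\in L^\infty(X)$ the bound transfers immediately from $(X,\d,\mm)$, but for $w\in L^\infty_\loc$ that argument fails: $e^{2w}$ may be unbounded and $\d'$-balls need not be contained in comparable $\d$-balls. The paper handles this with a cut-off argument: replace $w$ by bounded truncations $w_\ell=w\cdot\chi_\ell$ that agree with $w$ on $B_\ell(z)$, note that each $(X,\d^{w_\ell},\mm^{w_\ell})$ is RCD$(K'-1,N')$ by the bounded case (with a curvature bound and hence a volume-growth constant \emph{uniform in} $\ell$), and pass to the limit using that $B_r^{w_\ell}(z)=B_r^w(z)$ and $\mm^{w_\ell}=\mm^w$ there for $r\le\ell$. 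Without this (or an equivalent approximation), the BE-to-RCD implication you invoke cannot be applied to unbounded $w$.
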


Theorem \ref{Thm B} is a more or less immediate consequence of Theorem \ref{Thm A} and the fact that the Eulerian and the Lagrangian curvature-dimension conditions, BE$(K,N)$ and RCD$(K,N)$, are equivalent to each other
as proven in \cite{EKS-O}.

\begin{remark}
The first derivation of the transformation formula for the (Eulerian) curvature-dimension condition  BE$(K,N)$ under conformal transformation as well as under time change was presented
in  \cite{S-R} by the second  author in the setting of regular Dirichlet forms admitting a nice core of sufficiently smooth functions (``$\Gamma$-calculus in the sense of Bakry-\'Emery-Ledoux'').

Combining the techniques and results in \cite{G-N} and \cite{S-S},  the first author \cite{H-C, H-R}  proved the transformation formula for the Lagrangian    curvature-dimension condition $\rcdkn$  under conformal transformation when the reference function $w$ is bounded and smooth enough.  Together with the well-known transformation formula for $\rcdkn$ under drift transformations, this result also provides a transformation formula for $\rcdkn$ under time change.

The focus of the current paper is on proving the transformation formula for the (Eulerian or Lagrangian) curvature-dimension condition under time change in a setting of great generality (Dirichlet forms or metric measure spaces) and with minimal regularity and boundedness assumptions on $w$. 
\end{remark}

\paragraph{C.}
One of the  important applications of  time-change is the ``convexification'' of non-convex subsets $\Omega\subset X$ of an RCD$(K,N)$-space $(X,\d,\mm)$ as introdudced by the second author and Lierl  \cite{LierlSturm2018}. 
For sublevel sets  
 of regular semi-convex functions $V$,  they proved convexity  after suitable conformal transformations while control of the curvature bound under these transformations follows from the work  \cite{H-C} of the first author.  Unfortunately, these previous results do not apply to the most natural potential, the signed distance function $V=\d(.,\Omega)-\d(.,X\setminus\Omega)$ due to lack of regularity.
The more general results of the current paper, 
will apply to a suitable truncation  of the signed distance function and thus provide the following Convexification Theorem.

\begin{theorem}\label{Thm C}
Let  $\ms$ be a $\rcdkn$ space and $\Omega$ be a bounded  $\ell$-convex domain  in $(X, \d)$ with $\mm(\partial \Omega)=0$ and $\mm^+(\partial  \Omega)<\infty$.  
Then for any $N'\in (N, +\infty]$,  there exists a  Lipschitz  function  $w$  such that  the time-changed  metric measure space  $(\overline \Omega, \d^{w}, \mm^w)$ is a 
 ${\rm RCD}(K', N')$ space for some $K' \in \R$. 
\end{theorem}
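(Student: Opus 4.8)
The plan is to construct the weight $w$ as a suitable truncation and regularization of the signed distance function $V:=\d(\cdot,\Omega)-\d(\cdot,X\setminus\Omega)$, and then to invoke Theorem \ref{Thm B} on $\overline\Omega$. First I would recall why the signed distance function is the natural candidate: on an $\ell$-convex domain, $V$ is $(-\ell)$-concave from inside (more precisely $\mathbf{\Delta}V\le (N-1)\cdot(\text{something})$ in the appropriate comparison sense, with the singular part of $\mathbf{\Delta}V$ having the correct sign along $\partial\Omega$ because $\Omega$ is convex, i.e. the boundary ``curves away'' so that the distributional Laplacian of the distance to the complement carries a nonpositive singular part). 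The issue flagged in paragraph C is that $V$ itself is merely Lipschitz and a priori not in $\mathrm{D}_\loc(\mathbf{\Delta})$ with a signed-measure Laplacian, so one cannot feed it directly into Theorem \ref{Thm B}. The remedy is to set $w:=\psi\circ V$ for a smooth, bounded, nondecreasing, concave cutoff $\psi:\R\to\R$ that is the identity near $0$ and constant for large negative and large positive values; equivalently $w$ is a truncation of $V$ pushed through a concave reparametrization. Concavity of $\psi$ together with the chain rule for the measure-valued Laplacian (available on $\rcdkn$ spaces, cf. the calculus developed in \cite{G-N}) gives $\mathbf{\Delta}w=\psi'(V)\,\mathbf{\Delta}V+\psi''(V)\,|\D V|^2\,\mm$, and since $\psi''\le 0$, $|\D V|\le 1$, and $\psi'(V)\ge 0$, the singular part $\mathbf{\Delta}_{sing}w=\psi'(V)\,\mathbf{\Delta}_{sing}V\le 0$ inherits the good sign from convexity of $\Omega$, while the absolutely continuous part $\Delta_{ac}w$ is bounded on the bounded set $\overline\Omega$ using $\mm(\partial\Omega)=0$ and $\mm^+(\partial\Omega)<\infty$ to control the contribution near the boundary. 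Thus $w$ is Lipschitz, bounded on $\overline\Omega$, lies in $\mathrm{D}_\loc(\mathbf{\Delta})\cap L^\infty_\loc$, and satisfies the hypotheses of Theorem \ref{Thm B}.

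Next I would verify that, with $w$ chosen so that $w\equiv V$ in a neighborhood of $\partial\Omega$ (inside $\overline\Omega$), the time-changed metric $\d^{w}$ makes $\partial\Omega$ ``infinitely far'' or at least makes geodesics of $(\overline\Omega,\d^w)$ stay in the interior — this is precisely the convexification mechanism of \cite{LierlSturm2018}: near $\partial\Omega$ one has $V\to 0^-$ from inside and the conformal factor $e^{\bar w}=e^{\psi(V)}$ is arranged (by choosing $\psi$ with the right behavior, e.g. $\psi(r)\sim\log|r|$ or a milder blow-up as $r\to 0^-$, truncated to keep $w$ bounded — here a careful balance is needed) to penalize paths approaching the boundary enough that $(\overline\Omega,\d^w)$ becomes a geodesic space in which the relevant optimal transport geodesics do not feel $\partial\Omega$. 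In fact for the conclusion we only need $(\overline\Omega,\d^w,\mm^w)$ to be an $\rcdkn$-type space; the convexity of $\overline\Omega$ in the new metric is what guarantees that the curvature-dimension condition, which Theorem \ref{Thm B} establishes on $X$ (or on a neighborhood of $\overline\Omega$), localizes to $\overline\Omega$. I would cite the locality of the $\rcd$ condition and the gluing/restriction results for $\rcd$ spaces to pass from the time-changed space built on a neighborhood of $\overline\Omega$ to the space $(\overline\Omega,\d^w,\mm^w)$ itself, using that $w$ can be taken constant outside a neighborhood of $\overline\Omega$ so that away from $\partial\Omega$ nothing changes.

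Finally, with $w$ in hand and bounded on $\overline\Omega$, Theorem \ref{Thm B} yields that the time-changed space satisfies $\mathrm{RCD}(K',N')$ for any $N'\in(N,+\infty]$ with
\[
K'\le e^{-2w}\Big[K-\tfrac{(N-2)(N'-2)}{N'-N}|\D w|^2-\Delta_{ac}w\Big],
\]
and since the bracket is bounded below on $\overline\Omega$ (because $K$ is a constant, $|\D w|\le\Lip(\psi)$ is bounded, $\Delta_{ac}w$ is bounded by the argument above, and $e^{-2w}$ is bounded on the bounded set) there is indeed a finite real number $K'$ for which the condition holds. I expect the main obstacle to be the second step: making rigorous the claim that $\mathbf{\Delta}_{sing}V\le 0$ on an $\ell$-convex domain in the generality of $\rcdkn$ spaces (this requires the Laplacian comparison / Gauss–Green machinery for distance functions on $\rcd$ spaces and the measure-theoretic meaning of $\ell$-convexity of $\Omega$), and simultaneously choosing the cutoff profile $\psi$ so that $w$ is bounded yet still forces convexity of $\overline\Omega$ in $\d^w$ — these two desiderata pull in opposite directions (boundedness of $w$ versus sufficient blow-up of the conformal factor near $\partial\Omega$), and reconciling them with the finiteness of $\mathbf{\Delta}_{sing}w$ and $\Delta_{ac}w$ near the boundary, where one uses $\mm(\partial\Omega)=0$ and $\mm^+(\partial\Omega)<\infty$, is the delicate technical heart of the proof.
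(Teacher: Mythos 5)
Your overall strategy---truncate the signed distance function $V$ by a concave cutoff, apply the chain rule for the measure-valued Laplacian, and feed the result into Theorem~\ref{Thm B}---is the same skeleton used in the paper, but there are two substantive problems. First, you misread the convexification mechanism of \cite{LierlSturm2018}: you imagine needing the conformal factor $e^{\bar w}$ to blow up near $\partial\Omega$ (e.g.\ $\psi(r)\sim\log|r|$) so that paths are ``penalized,'' and then worry about reconciling that with boundedness of $w$. No such blow-up is used or needed. In Proposition~\ref{th:convexify} the weight is simply $w=\phi(-\ell' V)$ with $\phi$ a bounded, Lipschitz cutoff that equals the identity on a small interval around $0$; $w$ is globally Lipschitz and bounded, and convexity of $\overline\Omega$ in $\d^w$ is obtained from the Lierl--Sturm Convexification Theorem (their Theorem~2.17), not from pushing the boundary to infinity. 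So the ``tension'' you flag as the main obstacle is illusory.

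Second, and more importantly, you write $\mathbf{\Delta}w=\psi'(V)\,\mathbf{\Delta}V+\psi''(V)|\D V|^2\mm$ and assert that $\mathbf{\Delta}_{sing}w=\psi'(V)\,\mathbf{\Delta}_{sing}V\le 0$ ``inherits the good sign from convexity of $\Omega$.'' This presupposes exactly what has to be proved: that $V$ (and hence $w$) lies in $\mathrm{D}_\loc(\mathbf{\Delta})$ \emph{across} $\partial\Omega$ and that the singular mass there is nonpositive. Away from $\partial\Omega$ the Laplacian comparison for distance functions \cite{CM-Laplacian} does give the sign, but at $\partial\Omega$ the signed distance has a kink ($\nabla V$ flips direction) and one must show that the would-be surface contribution to $\mathbf{\Delta}w$ is a nonpositive finite measure. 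The paper's proof of Theorem~\ref{th:final} does this not by a formal chain rule but by a limit argument: it approximates with cutoffs $\phi_\epsilon(\d(\cdot,\partial\Omega))$ supported away from $\partial\Omega$, uses the needle decomposition from $L^1$-optimal transport (Lemma~\ref{prop-l1}) to disintegrate $\mm$ along transport rays of $V$, controls the error term $\int\varphi\,\Gamma(\phi_\epsilon(\d(\cdot,\partial\Omega)),w)\,\d\mm$ via the finite Minkowski content $\mm^+(\partial\Omega)<\infty$, and shows via Fatou that its limit has the right sign. Then Riesz--Markov yields $w\in\dbdelta$ with $\mathbf{\Delta}w\le\mu$. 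You flag this as ``the delicate technical heart'' but leave it unresolved, and the ingredient you are missing is precisely the localization/needle-decomposition machinery of Cavalletti--Mondino that makes the boundary term computable. Without it the claim $\mathbf{\Delta}_{sing}w\le0$ is unsupported.
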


 \paragraph
{Acknowledgement.}  The authors gratefully acknowledge support by the European Union through the ERC-Advanced
Grant ``Metric measure spaces and Ricci curvature -- analytic, geometric, and probabilistic challenges" (``RicciBounds").

\section{Time change and the Bakry-\'Emery condition}\label{sec:be}

This section is devoted to  study  synthetic lower Ricci bounds under time change in the setting of Dirichlet forms. More precisely, we will 
derive the transformation formula for the Bakry-\'Emery condition under time change.

\subsection{Dirichlet forms and the $\bekn$ condition}
In this part, we recall some basic facts about Dirichlet form theory and the Bakry-\'Emery theory. Firstly we make some basic assumptions on the Dirichlet form, see also \cite{Sturm96II}  
for examples satisfying these conditions.

\begin{assumption}\label{assumption1} We   assume that
\begin{itemize}
\item [a)] $(X, \tau)$ is a topological space, $(X, \mathcal{B})$  is a  measurable space and $\mm$ is a $\sigma$-finite Radon measure with full support (i.e. $\supp \mm=X$);
 ${\mathcal{B}}$ is  the $\mm$-completion of the Borel $\sigma$-algebra generated by $\tau$; and $L^p(X, \mm)$ will denote the space of  $L^p$-integrable functions on $(X, \mathcal{B}, \mm)$;
\item [b)]  $\mathcal{E}(\cdot): L^2(X, \mm) \mapsto [0,\infty]$  
is a strongly local, quasi-regular, symmetric Dirichlet form 
with domain   ${\mathbb V}:={\rm D}(\E)=\big \{ f\in L^2(X, \mm):  \E(f) < \infty \big\}$;  denote by $({P}_t)_{t>0}$ the heat semi-group generated by $\E$;
\item [c)] 
there exists an increasing sequence of (``cut-off'') functions with compact  support  $(\nchi_\ell)_{\ell\geq 1}\subset \V_\infty$ such that
 $0\leq \nchi_\ell \leq 1$, $\Gamma(\nchi_\ell)\le C$ for all $\ell$ and 
 $\nchi_\ell\to 1$, 
 $\Gamma(\nchi_\ell) \to0$ as $\ell\to\infty$, cf. \cite{Sturm95};
%
%
\item [d)]  $\E$ satisfies the Bakry-\'Emery condition $\be$ for some $K\in \R$. 
\end{itemize}
\end{assumption}
To formulate the latter, recall that 
$\V_\infty:={\rm D}(\E) \cap L^\infty(X, \mm)$ is an algebra with respect to pointwise multiplication. We say that $\mathcal{E}$ admits 
a carr\'e du champ if there exists a quadratic continuous map $\Gamma: \V\to L^1(X, \mm)$ such that
\[
\int_X\Gamma(f) \varphi\,\d\mm=\E(f, f\varphi)-\frac12\E(f^2,\varphi)\qquad\text{for all }f\in \V,\varphi\in \V_\infty.
\]
By polarization, we define $\Gamma(f, g) :=\frac14 \big ( \Gamma(f+g)-\Gamma(f-g)\big )$ and obtain $\E(f,g)=\int \Gamma(f,g)\,\d\mm$
for all $f, g\in \V$.
 It is  known that  $\Gamma$ is   local in the sense that   $\Gamma(f-g)=0$ $\mm$-a.e. on the set $\{f=g\}$.

The Dirichlet form $\E$ induces a densely defined selfadjoint operator $\Delta:{\rm D}(\Delta)\subset \V \mapsto L^2$ satisfying
$\E(f,g)=-\int g\Delta f\,\d\mm$
for all $g\in \V$. 
Put 
\begin{equation*}
 \Gamma_2(f; \varphi):=\frac12 \int \Gamma(f) \Delta \varphi\,\d\mm-\int \Gamma(f, \Delta f) \varphi\,\d\mm
\end{equation*}
and 
$
{\rm D}(\Gamma_2):=\Big\{ (f, \varphi): f,\varphi\in {\rm D}(\Delta), \
\Delta f\in\V, \
\varphi,\Delta\varphi \in {L^\infty}\Big\}
$.

%

\begin{definition}[Bakry-\'Emery condition]\label{def-bevar}
Given a function ${k} \in L^\infty$ and a  number $N\in [1,\infty]$, we say that the Dirichlet form    $\E$ satisfies the    BE$({k},N)$ condition   if it admits a carr\'e du champ and if
\begin{equation}\label{eq-bevar}
\frac12 \int \Gamma(f) \Delta \varphi\,\d\mm-\int \Gamma(f, \Delta f) \varphi\,\d\mm \geq \int \Big ({k} \Gamma(f)+ \frac1N (\Delta f)^2 \Big )\varphi\,\d\mm.
\end{equation}
for all $(f, \varphi)\in {\rm D}(\Gamma_2)$, $\varphi \geq 0$.
\end{definition}

\begin{remark} Since by our standing assumption the Dirichlet form $\E$ satisfies $\be$ for some $K\in\R$,
the ``space  of test functions''  
$$
{\rm TestF}(\E):=
\big\{f\in {\rm D}(\Delta): \ \Delta f\in \V^\infty, \, \Gamma(f)\in L^\infty\big\}$$  
is dense in $\V$   (c.f.  Section 2  \cite{AGS-B} and  Remark 2.5 therein).
Hence,  the BE$({k},N)$  condition will follow if 
\eqref{eq-bevar} holds true for all $f\in {\rm TestF}(\E)$ and all non-negative $\varphi\in {\rm D}(\Delta)\cap L^\infty$ with $\Delta\varphi \in {L^\infty}$.
\end{remark}

\begin{lemma}\label{2diff} For every $f\in{\rm D}(\Delta)$, we have $\Gamma(f)^{1/2}\in\V$ and
$$\E\Big(\Gamma(f)^{1/2}\Big)\le \int (\Delta f)^2\,d\mm- K\cdot \E(f).$$
\end{lemma}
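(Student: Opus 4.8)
The plan is to obtain the inequality as a limiting/integrated form of the Bakry-Émery condition $\be$, using the standard "self-improvement"-type argument that controls $\E(\Gamma(f)^{1/2})$ by $\Gamma_2$. First I would establish the estimate for $f$ in the dense class ${\rm TestF}(\E)$, where $\Gamma(f)\in L^\infty$ and $\Delta f\in\V$, so that all manipulations are legitimate, and then pass to general $f\in{\rm D}(\Delta)$ by approximation. For $f\in{\rm TestF}(\E)$, the key computational identity is that, for a suitable nonnegative test function $\varphi$ and a small regularization parameter $\eps>0$, one has the chain-rule expression for $\Gamma\big((\Gamma(f)+\eps)^{1/2}\big)$, which after multiplying by $\varphi$, integrating, and using $\int \Gamma(g)\varphi\,\d\mm = -\int g\,\Gamma(g,\varphi)\,\d\mm - \int g\varphi\,\Delta g\,\d\mm$ type identities, reduces to terms involving $\Gamma_2(f;\cdot)$. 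Concretely, writing $G_\eps:=(\Gamma(f)+\eps)^{1/2}$, one uses $\Gamma(G_\eps)=\frac{\Gamma(\Gamma(f))}{4(\Gamma(f)+\eps)}$ and the Bochner-type bound $\frac14\Gamma(\Gamma(f))\le \Gamma(f)\big(\Gamma_2\text{-density}\big)$ — more precisely the pointwise consequence of $\be$ that $\frac14\Gamma(\Gamma(f))\le \Gamma(f)\cdot\big[\text{``}\Gamma_2(f)\text{'' density}\big]$ in the appropriate weak sense — to get $\Gamma(G_\eps)\le \Gamma_2$-density pointwise, and then integrate against $\varphi=\nchi_\ell$.

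More carefully, I would run the argument at the level of integrated inequalities to avoid needing pointwise densities. Take $\varphi=\nchi_\ell^2$ (or $\nchi_\ell$) from the cut-off sequence in Assumption \ref{assumption1}(c). Apply \eqref{eq-bevar} with $N=\infty$, $K$ in place of $k$, to the function $f$: this gives $\Gamma_2(f;\varphi)\ge \int K\,\Gamma(f)\,\varphi\,\d\mm$. Separately, expand $\frac12\int \Gamma(f)\Delta\varphi\,\d\mm$ and, using the diffusion/chain rule together with the Leibniz rule for $\Gamma$, rewrite $\Gamma_2(f;\varphi) = \int \big[\tfrac12\Gamma(\Gamma(f),\varphi) \text{-type terms}\big]$; the point is that choosing $\varphi$ to approximate $1$ and using $\Gamma(\nchi_\ell)\to 0$, the "boundary" contributions vanish in the limit, and what remains on one side is $\ge \E(\Gamma(f)^{1/2})$ after applying Cauchy-Schwarz in the form $\Gamma(\Gamma(f),\varphi)$ combined with $\Gamma(G_\eps)^{1/2}$ manipulations — equivalently, one invokes the known fact (already implicit in $\be$, cf. the cited works \cite{AGS-B, Sturm96II}) that $\be$ implies $\Gamma(f)^{1/2}\in\V$ with $4\int \varphi\,\Gamma\big(\Gamma(f)^{1/2}\big)\,\d\mm \le 2\Gamma_2(f;\varphi) - 2\int K\Gamma(f)\varphi\,\d\mm + (\text{error}_\ell) + \frac{1}{\text{stuff}}\int (\Delta f)^2\varphi\,\d\mm$. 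Letting $\ell\to\infty$ with monotone/dominated convergence (using $\Gamma(f),(\Delta f)^2\in L^1$ and $\varphi\uparrow 1$, $\Gamma(\varphi)\to0$) yields $\E(\Gamma(f)^{1/2})\le \int(\Delta f)^2\,\d\mm - K\,\E(f)$ for $f\in{\rm TestF}(\E)$.

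Finally I would remove the restriction to test functions: given $f\in{\rm D}(\Delta)$, approximate by $f_n:=P_{1/n}f$ or by a test-function sequence with $f_n\to f$ in ${\rm D}(\Delta)$ (so $\Delta f_n\to\Delta f$ in $L^2$ and $\E(f_n)\to\E(f)$), note $\Gamma(f_n)\to\Gamma(f)$ in $L^1$, hence (up to subsequence) the nonnegative functions $\Gamma(f_n)^{1/2}$ converge to $\Gamma(f)^{1/2}$ in $L^2$; the uniform bound $\E(\Gamma(f_n)^{1/2})\le \int(\Delta f_n)^2\,\d\mm - K\E(f_n)\le C$ together with lower semicontinuity of $\E$ gives $\Gamma(f)^{1/2}\in\V$ and the desired inequality by passing to the limit. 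The main obstacle I anticipate is the first step: justifying the chain-rule manipulations for $(\Gamma(f)+\eps)^{1/2}$ and controlling the $\eps\to0$ passage with only the integrated form of $\be$ (no a priori regularity of $\Gamma(f)$ beyond $L^\infty$), and making sure the cut-off error terms involving $\Gamma(\nchi_\ell)$ genuinely vanish — this is where the precise version of the self-improvement estimate from \cite{AGS-B} (or \cite{Sturm96II}) needs to be invoked rather than re-derived from scratch.
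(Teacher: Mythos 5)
Your plan is correct in substance and rests on the same underlying tool as the paper's proof — the self-improvement of $\be$ — but the implementation is genuinely different. The paper invokes the \emph{already-integrated} $L^1$-version of the Bochner inequality (quoted from \cite{S-S}),
\[
\int \Gamma(f)^{1/2}\,\Delta\varphi\,\d\mm - \int \frac{\Gamma(f,\Delta f)}{\Gamma(f)^{1/2}}\,\varphi\,\d\mm \;\ge\; K\int \Gamma(f)^{1/2}\varphi\,\d\mm,
\]
valid for $f,\varphi\in{\rm D}(\Delta)$ with $\Delta f\in\V$, and then simply plugs in $\varphi=P_t\big(\Gamma(f)^{1/2}\big)$. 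Since $\int \Gamma(f)^{1/2}\,\Delta P_t\big(\Gamma(f)^{1/2}\big)\,\d\mm=-\E\big(P_{t/2}\Gamma(f)^{1/2}\big)$, and since the other two terms converge as $t\to0$ (by dominated convergence, using $|\Gamma(f,\Delta f)|/\Gamma(f)^{1/2}\le\Gamma(\Delta f)^{1/2}\in L^2$), the uniform upper bound on $\E\big(P_{t/2}\Gamma(f)^{1/2}\big)$ both forces $\Gamma(f)^{1/2}\in\V$ and gives the inequality in one stroke; the density step from $\{f:\Delta f\in\V\}$ to all of ${\rm D}(\Delta)$ is the same lower-semicontinuity argument you describe. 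Your route instead re-derives the pointwise bound $\Gamma(\Gamma(f))\le 4\Gamma(f)\big(\gamma_2(f)-K\Gamma(f)\big)$, applies the chain rule to $(\Gamma(f)+\eps)^{1/2}$, and integrates against cutoffs $\nchi_\ell$. This works but is heavier: you have to manage two limits ($\eps\to0$ and $\ell\to\infty$), and the cutoff error $\frac12\int\Gamma(f)\Delta\nchi_\ell\,\d\mm=-\frac12\int\Gamma(\Gamma(f),\nchi_\ell)\,\d\mm$ requires an integrability argument (Cauchy--Schwarz against $\Gamma(\nchi_\ell)^{1/2}$, together with an $L^2$ bound on $\Gamma(\Gamma(f))^{1/2}$) that the semigroup test function neatly avoids. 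Also, by starting from the $L^1$ inequality rather than the pointwise one, the paper only needs the class $\{f\in{\rm D}(\Delta):\Delta f\in\V\}$ in the first step, whereas your first step in ${\rm TestF}(\E)$ makes the subsequent approximation via $f_n=P_{1/n}f$ slightly delicate (these need not have $\Gamma(P_{1/n}f)\in L^\infty$, so they need not land in ${\rm TestF}(\E)$); you would want to weaken the regularity class in step one to match. None of this is a conceptual gap — you flag the right issues — but the paper's choice of test function makes the argument two lines instead of a page.
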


\begin{proof} By self-improvement, the Bakry-\'Emery inequality BE$(K,\infty)$ as introduced above implies the stronger $L^1$-version 
\begin{equation*}
 \int \Gamma(f)^{1/2} \Delta \varphi\,\d\mm- \int \frac1{\Gamma(f)^{1/2}}
\Gamma(f, \Delta f) \varphi\,\d\mm \geq K\,\int \Gamma(f)^{1/2}\varphi\,\d\mm.
\end{equation*}
for all $f,\varphi\in {\rm D}(\Delta)$ with $\Delta f\in\V$, see \cite{S-S}.
Choosing $\varphi=P_t(\Gamma(f)^{1/2})$ and then letting $t\to0$ yields the claim for $f\in {\rm D}(\Delta)$ with $\Delta f\in\V$. Since the class of these $f$'s is dense in ${\rm D}(\Delta)$, the claim follows.
\end{proof}

\begin{definition} i) We say that $f \in \V^e$ if
there exists a Cauchy sequence $( f_n )_n \subset \V$   w.r.t. the semi-norm  $\E( \cdot)$ and  such that   $f_n \to f$ $\mm$-a.e. Then we 
define $\E(f):=\lmt{n}{\infty}\E(f_n)$. Similarly, $\Gamma$ can be extended to $\V^e$.

ii) We say that $f\in \V_\loc$ if for any bounded open set $U$, there is $\bar f\in \V$ such that $f=\bar f$ on $U$. Then a function $\Gamma(f)\in L^1_\loc(X, \mm)$ can be defined unambiguously by $\Gamma(f):=\Gamma(\bar f)$ on $U$.

Similarly, we define the spaces ${\rm D}_\loc(\Delta)$ and ${\rm TestF}_\loc(\E)$.
\end{definition}

\begin{definition}[Local weak Bakry-\'Emery condition]\label{def-bevar-loc}
Given a function ${k} \in L^\infty_\loc$ and a  number $N\in [1,\infty]$, we say that     the Dirichlet form $\E$ satisfies the   ${\rm BE}_\loc({k},N)$  condition if it admits a carr\'e du champ and if
\begin{equation}\label{eq-bevar-loc}
-\frac12 \int \Gamma\big(\Gamma(f), \varphi\big)\,\d\mm-\int \Gamma(f, \Delta f) \varphi\,\d\mm \geq \int \Big ({k} \Gamma(f)+ \frac1N (\Delta f)^2 \Big )\varphi\,\d\mm.
\end{equation}
for all $f\in {\rm D}_\loc(\Delta)\cap L^\infty_\loc$ with $\Delta f\in\V_\loc$
and all non-negative $\varphi\in \V^\infty$ with compact support and $\Gamma(\varphi)\in L^\infty$.
\end{definition}
Note that our standing assumption $\be$ implies that $\Gamma(f)^{1/2}\in\V_\loc$ for each $f\in{\rm D}_\loc(\Delta)$. Thus for functions $f$ and $\varphi$ as above, the term
$-\frac12 \int \Gamma\big(\Gamma(f),\varphi\big)\,\d\mm$ is well-defined.

\begin{lemma}\label{weak-strong be} $\E$ satisfies ${\rm BE}({k},N)$ for  ${k} \in L^\infty$  if and only if it satisfies ${\rm BE}_\loc({k},N)$.
\end{lemma}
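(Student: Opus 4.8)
The plan is to prove the equivalence ${\rm BE}(k,N)\iff{\rm BE}_{\loc}(k,N)$ for $k\in L^\infty$ in two directions, the non-trivial one being that the local form implies the global form.

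\textbf{From ${\rm BE}(k,N)$ to ${\rm BE}_{\loc}(k,N)$.} Given $f\in{\rm D}_\loc(\Delta)\cap L^\infty_\loc$ with $\Delta f\in\V_\loc$ and a non-negative $\varphi\in\V^\infty$ with compact support and $\Gamma(\varphi)\in L^\infty$, I would first note that on a bounded open set $U$ containing $\supp\varphi$ we may replace $f$ by a global representative $\bar f\in\V$ agreeing with $f$ on $U$; by locality of $\Gamma$ and $\Delta$ all the integrands appearing in \eqref{eq-bevar-loc} are unchanged. The real issue is that $\bar f$ need not lie in ${\rm D}(\Delta)$ and $\varphi$ need not lie in ${\rm D}(\Delta)$. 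I would handle $\varphi$ by the standard integration-by-parts identity valid for $g\in\V$ with $\Gamma(g)^{1/2}\in\V$ (which holds here by Lemma \ref{2diff}, at least after the localization) and $\varphi\in\V^\infty$: $\int\Gamma(g)\Delta\varphi\,\d\mm=-\int\Gamma(\Gamma(g),\varphi)\,\d\mm$ whenever the left side makes sense, so that the ``$\Gamma_2$ with test function $\varphi$'' expression equals the ``carré-du-champ'' expression $-\frac12\int\Gamma(\Gamma(f),\varphi)\,\d\mm-\int\Gamma(f,\Delta f)\varphi\,\d\mm$. In other words, \eqref{eq-bevar} and \eqref{eq-bevar-loc} have the same left-hand side, so one only needs to enlarge the class of admissible $(f,\varphi)$; this is done by the density of ${\rm TestF}(\E)$ in $\V$ and of ${\rm D}(\Delta)\cap L^\infty$ test functions (the Remark after Definition \ref{def-bevar}) together with a routine approximation argument using the cut-off functions $\nchi_\ell$ from Assumption \ref{assumption1}(c).

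\textbf{From ${\rm BE}_{\loc}(k,N)$ to ${\rm BE}(k,N)$.} This is the direction that requires work, and it is the main obstacle. Here I am given $f\in{\rm TestF}(\E)$ and non-negative $\varphi\in{\rm D}(\Delta)\cap L^\infty$ with $\Delta\varphi\in L^\infty$, and I must produce \eqref{eq-bevar} from \eqref{eq-bevar-loc}, whose hypotheses only allow $\varphi$ with \emph{compact support} and $\Gamma(\varphi)\in L^\infty$. The strategy is to localize: set $\varphi_\ell:=\nchi_\ell^2\,\varphi$ (or $\nchi_\ell\varphi$), which is non-negative, has compact support, lies in $\V^\infty$, and satisfies $\Gamma(\varphi_\ell)\in L^\infty$ because $\Gamma(\nchi_\ell)\le C$ and $\Gamma(\varphi)\in L^\infty$ (the latter holds since $\varphi\in{\rm D}(\Delta)$ with $\Delta\varphi\in L^\infty$ implies, via Lemma \ref{2diff} and $\be$, that $\Gamma(\varphi)^{1/2}\in\V$; one also needs $\Gamma(\varphi)\in L^\infty$, which for the application can be arranged, or circumvented by a further mollification $P_s\varphi$). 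Applying ${\rm BE}_{\loc}(k,N)$ to the pair $(f,\varphi_\ell)$ and then letting $\ell\to\infty$, one must pass to the limit in each of the four terms:
\begin{itemize}
\item $\int\Gamma(f,\Delta f)\varphi_\ell\,\d\mm\to\int\Gamma(f,\Delta f)\varphi\,\d\mm$ and $\int k\,\Gamma(f)\varphi_\ell\,\d\mm$, $\frac1N\int(\Delta f)^2\varphi_\ell\,\d\mm$ converge by dominated convergence, since $f\in{\rm TestF}(\E)$ gives $\Gamma(f),\Delta f\in L^\infty\cap L^2$ and $0\le\varphi_\ell\le\varphi\in L^1\cap L^\infty$;
\item the delicate term is $-\frac12\int\Gamma(\Gamma(f),\varphi_\ell)\,\d\mm$. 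Writing $\Gamma(\Gamma(f),\nchi_\ell^2\varphi)=\nchi_\ell^2\,\Gamma(\Gamma(f),\varphi)+2\nchi_\ell\varphi\,\Gamma(\Gamma(f),\nchi_\ell)$, the first piece tends to $\Gamma(\Gamma(f),\varphi)$ by dominated convergence (using $\Gamma(f)^{1/2}\in\V$, hence $\Gamma(\Gamma(f))\in L^1$, and boundedness of $\varphi$), while the error term is controlled by Cauchy--Schwarz: $\big|\int\nchi_\ell\varphi\,\Gamma(\Gamma(f),\nchi_\ell)\,\d\mm\big|\le\|\varphi\|_\infty\big(\int\Gamma(\Gamma(f))\,\d\mm\big)^{1/2}\big(\int\Gamma(\nchi_\ell)\,\d\mm\big)^{1/2}\to0$ by Assumption \ref{assumption1}(c) — or, if $\int\Gamma(\nchi_\ell)\,\d\mm$ is not summable globally, by localizing $\Gamma(\Gamma(f))$ to a large ball and using $\Gamma(\nchi_\ell)\to0$ in $L^1_{\loc}$.
\end{itemize}
Finally I would recover \eqref{eq-bevar} from the resulting carré-du-champ inequality by the same integration-by-parts identity as above, now justified because $\varphi\in{\rm D}(\Delta)$. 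The remark after Definition \ref{def-bevar} then upgrades this to the full ${\rm BE}(k,N)$ condition. The main technical obstacle throughout is ensuring that $\Gamma(\varphi_\ell)\in L^\infty$ and that the cut-off errors involving $\Gamma(\nchi_\ell)$ vanish in the limit; both are exactly what Assumption \ref{assumption1}(c) is designed to give, but one must be careful that the products $\nchi_\ell^2\varphi$ (rather than $\nchi_\ell\varphi$) are used so that the gradient of the cut-off appears multiplied by $\nchi_\ell$, keeping the Cauchy--Schwarz bound uniform.
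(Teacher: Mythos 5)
Your overall strategy matches the paper's: for the forward direction, replace $f$ by a global representative agreeing on a neighbourhood of $\supp\varphi$; for the backward direction, cut off and mollify $\varphi$ (via $\nchi_\ell$ and $P_s=P_{1/n}$) and pass to the limit in $\ell$ and then in $n$. Two remarks on the details. First, your worry that the local extension ``$\bar f$ need not lie in ${\rm D}(\Delta)$'' is a non-issue: ${\rm D}_\loc(\Delta)$ is \emph{defined} by the existence, for each bounded open $U$, of a global $\bar f\in{\rm D}(\Delta)$ with $f=\bar f$ on $U$, and the paper simply picks such an $f'$ (with the extra requirement $\Delta f'\in\V$). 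Second, and more substantially: your displayed Cauchy--Schwarz bound for the cut-off error,
\[
\Big|\int\nchi_\ell\varphi\,\Gamma(\Gamma(f),\nchi_\ell)\,\d\mm\Big|\le\|\varphi\|_\infty\Big(\int\Gamma(\Gamma(f))\,\d\mm\Big)^{1/2}\Big(\int\Gamma(\nchi_\ell)\,\d\mm\Big)^{1/2},
\]
relies on $\int\Gamma(\nchi_\ell)\,\d\mm\to0$, which Assumption~\ref{assumption1}(c) does \emph{not} give (it supplies only $\Gamma(\nchi_\ell)\le C$ and $\Gamma(\nchi_\ell)\to0$ pointwise; in typical examples $\int\Gamma(\nchi_\ell)\,\d\mm$ diverges with $\ell$). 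The fix: write $\Gamma(\Gamma(f),\nchi_\ell)=2\,\Gamma(f)^{1/2}\,\Gamma\big(\Gamma(f)^{1/2},\nchi_\ell\big)$, apply Cauchy--Schwarz so that the factor $\varphi^2\,\Gamma(f)\,\Gamma(\nchi_\ell)$ sits alone inside one integral, and then use dominated convergence with majorant $C\,\varphi^2\,\Gamma(f)\in L^1$ (available since $\Gamma(f)\in L^\infty$ for $f\in{\rm TestF}$ and $\varphi\in L^2$), while the other factor is bounded by $\E\big(\Gamma(f)^{1/2}\big)^{1/2}$ via Lemma~\ref{2diff}. Your ``localize $\Gamma(\Gamma(f))$ to a large ball'' fallback is vaguer than needed but gestures at precisely this DCT step. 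With this estimate repaired, the proposal is essentially the paper's argument; the $\nchi_\ell^2$-versus-$\nchi_\ell$ choice is a cosmetic variant.
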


\begin{proof} Assume that BE$({k},N)$ holds true and let $f$ and $\varphi$ be given as in Definition \ref{def-bevar-loc}. Choose $f'\in {\rm D}(\Delta)\cap L^\infty$ with $\Delta f'\in\V$ such that $f=f'$ on a neigborhood of $\{\varphi\not=0\}$.
Choose uniformly bounded, nonnegative $\varphi_n\in  {\rm D}(\Delta)$ with $\Gamma(\varphi_n), \Delta\varphi_n \in {L^\infty}$ such that $\varphi_n\to \varphi$ a.e.~on X and in $\V$ as $n\to\infty$. (For instance, put $\varphi_n=P_{1/n}\varphi$.) Then \eqref{eq-bevar} implies
\begin{equation*}
-\frac12 \int \Gamma\big(\Gamma(f'), \varphi_n\big)\,\d\mm-\int \Gamma(f', \Delta f') \varphi_n\,\d\mm \geq \int \Big ({k} \Gamma(f')+ \frac1N (\Delta f')^2 \Big )\varphi_n\,\d\mm>-\infty
\end{equation*}
for all $n$. Passing to the limit $n\to\infty$ yields \eqref{eq-bevar-loc}  with $f'$ in the place of $f$. Since by assumption $f=f'$ on a neigborhood of $\{\varphi\not=0\}$, this yields the claim \eqref{eq-bevar-loc}.

Conversely, assume that BE$_\loc({k},N)$ holds true and let $f$ and $\varphi$ be given as in Definition \ref{def-bevar}. 
Put $\varphi_n= P_{1/n}\varphi$ and $\varphi_{\ell,n}=\nchi_\ell\cdot P_{1/n}\varphi$ with $(\nchi_\ell)_\ell$ being the cut-off functions from assumption \ref{assumption1}. According to the BE$_\loc({k},N)$ assumption, \eqref{eq-bevar-loc} holds with  $\varphi_{\ell,n}$ in the place of $\varphi$. Passing to the limit $\ell\to\infty$ yields \eqref{eq-bevar-loc} with $\varphi_{n}$ in the place of $\varphi$ ($\forall n$).
This, however, is equivalent to \eqref{eq-bevar}, again with  $\varphi_{n}$ in the place of $\varphi$.  Finally  passing  to the limit $n\to\infty$ yields  \eqref{eq-bevar} for the given $\varphi$.
\end{proof}

\begin{remark}\label{rem-be-weak} From the proof of the preceding Lemma, it is obvious that the class of $f$'s to be considered for \eqref{eq-bevar-loc}
can equivalently be restricted to  $f\in {\rm D}_\loc(\Delta)\cap L^\infty_\loc$ with $\Delta f\in\V_\loc\cap L^\infty_\loc$.
\end{remark}

\subsection{Self-improvement of the Bakry-\'Emery condition}

The formulation of the subsequent results on the self-impovement property will require the theory of differential structures of Dirichlet forms as introduced  by Gigli in \cite{G-N}.  In order to shorten the length of the paper, we will skip the introduction of (co)tangent modules,   list  the results directly and ignore subtle differences.

\begin{proposition}[Section 2.2, \cite{G-N}]
Given  a strongly local, symmetric Dirichlet form  $\E$  admitting a carr\'e du champ $\Gamma$ defined on $\V^e$ as above. Then 
there exists  a $L^\infty$-Hilbert module  $L^2(TM)$ satisfying the following properties.
\begin{itemize}
\item [i)] $L^2(TM)$ is a Hilbert space equipped with the norm $\| \cdot \|$ such that the following   correspondence (embedding)  holds
\[
\V^e \ni f \mapsto \nabla f \in L^2(TM), ~~~~\| \nabla f\|^2=\int \Gamma(f)\,\d\mm.
\]
\item [ii)]$L^2(TM)$ is a module over the commutative ring $L^\infty(X, \mm)$.
\item [iii)]The norm $\| \cdot \|$ is  induced  by a pointwise inner product $\la \cdot, \cdot \ra$ satisfying
\[
\la \nabla f, \nabla g \ra=\Gamma(f,g)~~~~~\mm-\text{a.e.}
\]
and
\[
\la h \nabla f, \nabla g \ra=h\la  \nabla f, \nabla g \ra~~~~~\mm-\text{a.e.}
\]
for any $f, g\in \V^e$.

\item [iv)] $L^2(TM)$ is generated by $\{\nabla g: g\in \V^e\}$ in the following sense. For any $v\in L^2(TM)$, there exists a sequence $v_n=\sum_{i=1}^{M_n} a_{n,i} \nabla g_{n,i}$ with $a_{n,i}\in L^\infty$ and $g_{n,i}\in \V^e$,  such that $\| v-v_n\| \to 0$ as $n\to \infty$.
\end{itemize}
\end{proposition}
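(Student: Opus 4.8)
The plan is to follow Gigli's construction of the tangent module associated with a Dirichlet form, building $L^2(TM)$ by hand as the completion of formal combinations of gradients. First I would introduce the \emph{pre-tangent module} $\mathcal{P}$: the collection of formal finite sums $v=\sum_{i=1}^n \mathbf{1}_{A_i}\,\nabla f_i$ with $\{A_i\}$ a finite Borel partition of $X$ and $f_i\in\V^e$, where two such expressions $\sum_i\mathbf{1}_{A_i}\nabla f_i$ and $\sum_j\mathbf{1}_{B_j}\nabla g_j$ are identified when $\Gamma(f_i-g_j)=0$ $\mm$-a.e.\ on $A_i\cap B_j$ for all $i,j$. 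The locality of $\Gamma$ (stated in the excerpt and extended to $\V^e$) is precisely what makes this an equivalence relation compatible with the obvious addition and $L^\infty$-scaling, and it is again locality together with the bilinearity of $\Gamma$ that lets me define the pointwise scalar product
\[
\langle v,w\rangle:=\sum_{i,j}\mathbf{1}_{A_i\cap B_j}\,\Gamma(f_i,g_j)\in L^1(X,\mm)
\]
for $v=\sum_i\mathbf{1}_{A_i}\nabla f_i$ and $w=\sum_j\mathbf{1}_{B_j}\nabla g_j$, and to check that it does not depend on the chosen representatives. Setting $|v|:=\langle v,v\rangle^{1/2}\in L^2(X,\mm)$ (using nonnegativity of the quadratic form $\Gamma$) and $\|v\|:=\big(\int|v|^2\,\d\mm\big)^{1/2}$ then equips $\mathcal{P}$ with a seminorm.

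Next I would put the natural $L^\infty$-action on $\mathcal{P}$: for a simple function $h=\sum_k c_k\mathbf{1}_{C_k}$ set $h\cdot v:=\sum_{i,k}c_k\,\mathbf{1}_{A_i\cap C_k}\nabla f_i$, and verify the module axioms together with the pointwise identities $|hv|=|h|\,|v|$ and $\langle hv,w\rangle=h\,\langle v,w\rangle$ $\mm$-a.e. Because $|hv|=|h|\,|v|$, multiplication by a bounded $h$ is $\|\cdot\|$-continuous, so the action extends from simple functions to all of $L^\infty(X,\mm)$; the only slightly delicate point here is to approximate $h$ in a way that controls $|hv|$ pointwise, not merely in the $L^\infty$-norm. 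I would then pass to the quotient of $\mathcal{P}$ by $\{v:\|v\|=0\}$ and take the metric completion, calling the result $L^2(TM)$. The $L^\infty$-action, the pointwise scalar product viewed as a continuous map into $L^1$, and the pointwise norm $|\cdot|\colon L^2(TM)\to L^2(X,\mm)$ all descend to the quotient and extend to the completion — for instance $\big\||v_n|-|v_m|\big\|_{L^2}\le\|v_n-v_m\|$ shows that $(|v_n|)$ is $L^2$-Cauchy whenever $(v_n)$ is. This gives (i)--(iii): $f\mapsto\nabla f$ on $\V^e$ is the inclusion of the one-term elements, with $\|\nabla f\|^2=\int\Gamma(f)\,\d\mm$; the Hilbert structure is induced by the bilinear symmetric form $(v,w)\mapsto\int\langle v,w\rangle\,\d\mm$, which is complete by construction and whose associated norm is $\|\cdot\|$; and the identities $\langle\nabla f,\nabla g\rangle=\Gamma(f,g)$ and $\langle h\nabla f,\nabla g\rangle=h\langle\nabla f,\nabla g\rangle$ are inherited from $\mathcal{P}$.

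Finally, (iv) is essentially built into the construction: every element of $\mathcal{P}$ already has the form $\sum_i a_i\nabla g_i$ with $a_i\in L^\infty$ and $g_i\in\V^e$, and $L^2(TM)$ is by definition the $\|\cdot\|$-closure of $\mathcal{P}$, so such finite sums are dense. The main obstacle is not any isolated deep estimate but the bookkeeping needed to make the pointwise operations well defined on equivalence classes of formal sums over arbitrary finite — and, in the limit, countable — Borel partitions: every well-definedness claim (independence of $\langle\cdot,\cdot\rangle$ from the representative, $|hv|=|h|\,|v|$, consistency of the action under refinement, survival under completion) reduces to the locality of $\Gamma$ together with a dominated-convergence argument in $L^1(X,\mm)$ for countable refinements. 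One should also note that passing from $\V$ to $\V^e$ causes no difficulty, since $\Gamma$ extends to $\V^e$ preserving both bilinearity and locality, which are the only properties of $\Gamma$ the argument uses.
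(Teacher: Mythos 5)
The paper does not prove this statement; it simply cites it from Gigli [G-N], Section~2.2. Your sketch is a faithful reconstruction of exactly that construction: Gigli builds the pre-(co)tangent module from finite collections $\{(A_i,f_i)\}$ with Borel $A_i$ and $f_i\in\V^e$, identifies via locality of $\Gamma$, endows it with the $L^\infty$-action and the pointwise pairing $\sum_{i,j}\mathbf 1_{A_i\cap B_j}\Gamma(f_i,g_j)$, and passes to the quotient and $L^2$-completion; properties (i)--(iv) then hold by construction, with (iv) being the density of the pre-module. So your proposal is correct and takes essentially the same route as the cited source (the sole cosmetic difference — working directly with $\nabla f$ rather than first building $L^2(T^*M)$ from $\d f$ and dualizing — is immaterial here because the Dirichlet form is quadratic and admits a carr\'e du champ, hence the space is infinitesimally Hilbertian and the two modules are canonically isomorphic).
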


\bigskip
 By Corollary 3.3.9 \cite{G-N}, for any $f \in {\rm D}({\Delta})$ there  is  a continuous symmetric $L^\infty(M)$-bilinear map $\H_f(\cdot, \cdot)$ defined on $[L^2(TM)]^2$,   with values in $L^0(X, \mm)$.
In particular, if $f, g, h \in {\rm TestF}$ (c.f.  Lemma 3.2 \cite{S-S}, Theorem 3.3.8 \cite{G-N}),  $\H_f(\cdot, \cdot)$ is given by the following formula:
\begin{equation}\label{eq:hessian}
2\H_f(\nabla g, \nabla h)=\Gamma(g, \Gamma(f, h)) +\Gamma(h, \Gamma(f, g))-\Gamma(f, \Gamma(g,h)).
\end{equation}

\bigskip

Combining Theorem 1.4.11 and Proposition 1.4.10 in \cite{G-N}, we obtain  the following structural results. As a consequence, we can compute $\H_f(\cdot, \cdot)$ and $\Gamma(\cdot, \cdot)$ using local coordinate.

\begin{proposition}\label{decomposition}
Denote  by $L^2(TM)$ the tangent module associated with  $\E$. Then there exists  a unique  decomposition (up to $\mm$-null sets) $\{ E_n\}_{n \in \mathbb{N} \cup \{\infty\}}$ of $X$ such that
\begin{itemize}
\item [a)] For any $n \in \mathbb{N}$ and any $B \subset E_n$ with  positive measure,  $L^2(TM)$ has an orthonormal basis $\{e_{i,n}\}_{i=1}^n$ on $B$,
\item [b)] For every subset $B$ of $E_\infty$ with finite positive measure, there exists an orthonormal basis $\{e_{i,B}\}_{i \in \mathbb{N} \cup \{\infty\}} \subset L^2(TM)\restr{B}$  which generates $L^2(TM)\restr{B}$,
\end{itemize}
where we say that a countable set $\{v_i\}_i\subset L^2(TM)$ is  orthonormal  on $B$ if  $\la v_i, v_j \ra=\delta_{ij}$ $\mm$-a.e. on $B$.  By definition, the local dimension $\dim_{\rm loc}(x) \in \mathbb{N} \cup \{\infty\}$ is $n$ if $x\in E_n$.
\end{proposition}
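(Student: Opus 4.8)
The plan is to deduce this purely from the general structure theory of $L^\infty$-Hilbert modules due to Gigli, applied to the module $L^2(TM)$ produced in the previous proposition. The only extra input needed is that $L^2(TM)$ is \emph{separable}: this follows from quasi-regularity of $\E$, which guarantees a countable family $\{g_i\}\subset\V^e$ whose gradients generate the module in the sense of item (iv) above.

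First I would set up the notion of \emph{local dimension}. For a measurable $B$ with $\mm(B)>0$, say that $L^2(TM)$ has dimension $\geq n$ on $B$ if there exist $v_1,\dots,v_n\in L^2(TM)$ with $\langle v_i,v_j\rangle=\delta_{ij}$ $\mm$-a.e.\ on $B$. The collection of such $B$ is stable under countable disjoint unions, so there is a $\mm$-essentially largest measurable set on which the dimension is $\geq n$; taking these for consecutive $n$ and passing to the appropriate differences and complements yields a Borel partition $\{E_n\}_{n\in\N\cup\{\infty\}}$ with the property that $L^2(TM)$ has dimension exactly $n$ on every positive-measure subset of $E_n$. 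This is the content of Proposition 1.4.10 in \cite{G-N}; uniqueness up to $\mm$-null sets is automatic from the maximality in the construction, since any two partitions with the stated property must agree on each $E_n$.

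Second, for the existence of the orthonormal bases in (a) and (b), I would run the module version of the Gram--Schmidt algorithm, which is Theorem 1.4.11 in \cite{G-N}. Starting from the countable generating family $\{\nabla g_i\}$, one inductively builds $e_1,e_2,\dots$ by normalizing, on each measurable piece where the relevant quantity is nonzero, the component of $\nabla g_i$ orthogonal to the span of $e_1,\dots,e_{i-1}$; because the $L^\infty$-action allows one to ``cut and paste'' over measurable sets, this can be carried out globally. On $E_n$ with $n<\infty$, exactly $n$ of the resulting vectors are nonzero on each positive-measure $B\subset E_n$, giving the orthonormal basis $\{e_{i,n}\}_{i=1}^n$; restricting to a finite-measure $B\subset E_\infty$, the same procedure yields a countable orthonormal family $\{e_{i,B}\}_{i\in\N\cup\{\infty\}}$ which, being extracted from a generating family, still generates $L^2(TM)\restr B$. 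The concluding statement that $\dim_\loc(x):=n$ for $x\in E_n$ is then a consistent definition, being immediate from uniqueness of the decomposition.

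I expect the main obstacle to be the measurability and glueing bookkeeping in the Gram--Schmidt step: one must check that the sets on which successive vectors ``survive'' are measurable, that the normalizations patch into globally defined elements of $L^2(TM)$, and that the number of surviving vectors is genuinely constant on each $E_n$ rather than merely a.e.\ bounded. All of this is precisely what the two cited results of \cite{G-N} supply, so once they are invoked the remaining verification is routine; the only point requiring care specific to our setting is verifying that Assumption \ref{assumption1} — in particular quasi-regularity together with the existence of a carr\'e du champ — indeed places us within the scope of those results.
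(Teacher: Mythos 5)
Your proposal is correct and follows essentially the same route as the paper: the paper offers no independent argument for this proposition but simply states it as a direct consequence of Proposition 1.4.10 and Theorem 1.4.11 in \cite{G-N}, which are exactly the two results you invoke for the dimensional decomposition and the module Gram--Schmidt construction, respectively. The additional remarks you make on separability, measurability and gluing are accurate expansions of what those cited results already encapsulate, so there is no substantive difference in approach.
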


\begin{proposition}\label{prop:finitedim}
Let $\E$ be a  Dirichlet form satisfying 
the ${\rm BE}({k},N)$ condition for some 
${k} \in L^\infty$ and some number $N\in [1,\infty]$ and let
 $\{ E_n\}_{n \in \mathbb{N} \cup \{\infty\}}$ be the decomposition given by Proposition \ref{decomposition}. 
Then  $\mm(E_n)=0$ for $n>N$, and 
%
for any $(f, \varphi) \in {\rm D}(\Gamma_2)$,  we have
\begin{eqnarray}\label{eq:prop:infinitedim}
{ \Gamma}_2(f; \varphi) &\geq& \int \Big{(} {k}\Gamma(f)+|\H_f|^2_{\rm HS}+\frac1{N-\dim_{\rm loc}}(\tr \H_f -\Delta f)^2 \Big{)}\varphi\,\d\mm 
\end{eqnarray}
 where $\frac1{N-\dim_{\rm loc}}(\tr \H_f -\Delta f)^2 $ is taken $0$ on $E_N$  by definition.
 
 The same estimate \eqref{eq:prop:infinitedim} also holds true for all 
  all $f\in {\rm D}_\loc(\Delta)\cap L^\infty_\loc$ with $\Delta f\in\V_\loc$
and all nonnegative $\varphi\in \V^\infty$ with compact support and $\Gamma(\varphi)\in L^\infty$
 provided $\E$ satisfies
the ${\rm BE}_\loc({k},N)$ condition for some 
${k} \in L^\infty_\loc$ and $N\in [1,\infty]$.
\end{proposition}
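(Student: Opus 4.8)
The plan is to establish the pointwise Bochner inequality \eqref{eq:prop:infinitedim} by exploiting the self-improvement mechanism of the Bakry-\'Emery condition combined with the local chart structure provided by Proposition \ref{decomposition}. First I would recall that $\BE({k},N)$ in the integrated form \eqref{eq-bevar} can be upgraded, via Gigli's calculus (Theorem 3.3.8 and Corollary 3.3.9 in \cite{G-N}, see also \cite{S-S}), to the pointwise ``dimensional'' Bochner inequality
\[
{\Gamma}_2(f;\varphi)\ \ge\ \int\Big({k}\,\Gamma(f)+|\H_f|_{\rm HS}^2+\tfrac1{N-\dim_{\rm loc}}\,(\tr\H_f-\Delta f)^2\Big)\varphi\,\d\mm
\]
for $f\in{\rm TestF}(\E)$, where the Hessian $\H_f$ is defined by \eqref{eq:hessian} and its Hilbert-Schmidt norm and trace are computed in the orthonormal frames $\{e_{i,n}\}$ on each $E_n$. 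The key algebraic input is that, on the set $E_n$, the quadratic form $\Lambda\mapsto |\Lambda|_{\rm HS}^2+\frac1{N-n}(\tr\Lambda-a)^2$ is minimized over symmetric $n\times n$ matrices $\Lambda$ subject to no constraint, but the relevant improvement of $\BE$ produces exactly this combination after optimizing the trace part; this is the standard self-improvement computation (Bakry's $\Gamma_2$ trick adapted to Dirichlet forms). The claim $\mm(E_n)=0$ for $n>N$ follows by a now-classical argument: on a set of positive measure in $E_n$ with $n>N$ one can build (using the orthonormal frame and suitable test functions) a function $f$ with $\H_f$ close to $\tfrac1n(\tr\H_f)\,\Id$ and $\tr\H_f$ prescribed, forcing $|\H_f|_{\rm HS}^2+\tfrac1{N-n}(\tr\H_f-\Delta f)^2$ to take arbitrarily negative values unless the measure vanishes; alternatively one invokes directly the corresponding statement already present in the literature under $\BE({k},N)$ (this is the content of \cite{G-N} combined with the self-improved inequality).

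For the first assertion (global $\BE$), once $\mm(E_n)=0$ for $n>N$ is known, the pointwise inequality on ${\rm TestF}(\E)$ is extended to all $(f,\varphi)\in{\rm D}(\Gamma_2)$ by an approximation argument: test functions are dense in the relevant topology, $\H_f$, $\tr\H_f$ and $\Delta f$ pass to the limit in $L^0$ (using Lemma \ref{2diff} to control $\Gamma(f)^{1/2}\in\V$ and the continuity of the $L^\infty$-bilinear map $\H_f$ from Corollary 3.3.9 in \cite{G-N}), and the integrand is lower semicontinuous under this convergence, so Fatou yields \eqref{eq:prop:infinitedim} in the limit. The convention that $\tfrac1{N-\dim_{\rm loc}}(\tr\H_f-\Delta f)^2$ is read as $0$ on $E_N$ is forced here because on $E_N$ one has $\tr\H_f=\Delta f$ (the trace of the Hessian equals the Laplacian when the local dimension saturates $N$), so the term is a genuine $0/0$ that must be excised.

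For the second assertion (local $\BE_\loc$), I would run the same argument with the localization machinery already set up in Lemma \ref{weak-strong be} and Remark \ref{rem-be-weak}: given $f\in{\rm D}_\loc(\Delta)\cap L^\infty_\loc$ with $\Delta f\in\V_\loc$ and $\varphi\in\V^\infty$ nonnegative with compact support and $\Gamma(\varphi)\in L^\infty$, one replaces $f$ by a global representative $f'\in{\rm D}(\Delta)\cap L^\infty$ agreeing with $f$ on a neighborhood of $\{\varphi\neq0\}$, applies the global estimate to $(f',\varphi)$ after the test-function approximation, and then uses locality of $\Gamma$ and $\H$ (the identity \eqref{eq:hessian} is local, and $\Gamma(f-g)=0$ $\mm$-a.e.\ on $\{f=g\}$) to transfer the inequality back to $f$ on $\{\varphi\neq0\}$. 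The decomposition $\{E_n\}$ and the statement $\mm(E_n)=0$ for $n>N$ are intrinsic to $\E$ and unchanged under this localization.

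The main obstacle I anticipate is the self-improvement step itself: passing from the integrated $\BE({k},N)$ inequality \eqref{eq-bevar} to the pointwise inequality with the full $|\H_f|_{\rm HS}^2$ term plus the sharp dimensional correction $\tfrac1{N-\dim_{\rm loc}}(\tr\H_f-\Delta f)^2$. This requires the iteration/self-improvement argument of Bakry-\'Emery-Ledoux transplanted to the Dirichlet-form setting (carried out via Gigli's tangent module in \cite{G-N} and \cite{S-S}), together with the linear-algebra optimization that identifies the correct constant $\tfrac1{N-n}$ on each chart $E_n$; getting the dimensional term with the \emph{right} denominator $N-\dim_{\rm loc}$ rather than merely $N$ is the delicate point, and it is precisely where the finiteness of $N$ and the vanishing $\mm(E_n)=0$ for $n>N$ are used in tandem. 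Everything else — the approximation, the localization, the Fatou passage to the limit — is routine given the tools already assembled in the excerpt.
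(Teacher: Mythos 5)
Your proposal is correct and follows essentially the same route as the paper, which in fact gives no argument of its own but simply cites \cite{H-R} (Proposition 3.2 and Theorem 3.3) for the constant-$k$ case and remarks that the self-improvement technique there carries over to variable $k$ and that the localization is straightforward. The mechanism you describe — self-improvement via Gigli's tangent-module calculus and the orthonormal frames supplied by Proposition \ref{decomposition}, then approximation to pass from test functions to $D(\Gamma_2)$, and the cut-off/localization argument of Lemma \ref{weak-strong be} for the local version — is precisely the content of the cited result, and your account is, if anything, more explicit than the paper's one-line proof.
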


\begin{proof}
The proof for constant ${k}=K$ was given in  \cite{H-R}, Proposition 3.2 and Theorem 3.3. In fact, the proof there only relies  on  a  so-called self-improvement technique in Bakry-\'Emery theory,  which   can also be applied  to  BE$({k},N)$  case without difficulty. Also the extension via localization is straightforward.
\end{proof}

In order to proceed, we briefly recall the notion of measure-valued Laplacian ${\bf \Delta}$ as introduced in \cite{S-S, G-O}.
We say that  $f\in {\rm D}({\bf \Delta}) \subset \V^e$ if there exists a  signed Borel measure  $\mu=\mu_+-\mu_-$  charging no capacity zero sets 
such that 
 \[
\int \overline{\varphi} \,\d\mu=-\int \Gamma(\varphi.
f)\,\d \mm\]
for any $\varphi\in \V$ with quasi-continuous representative
$\overline{\varphi}\in L^1(X, |\mu|)$.
If  $\mu$ is unique,  we denote it by ${\bf \Delta} f$. If ${\bf \Delta} f \ll \mm$,  we also denote its density by $\Delta f$ if there is no ambiguity.

\begin{proposition}[See  Lemma 3.2 \cite{S-S}]\label{prop:measurebochner}
Let $\E$ be a  Dirichlet form satisfying the ${\rm BE}_\loc({k},N)$ condition.  Then for any $f\in {\rm TestF}_\loc(\E)$, we have $\Gamma(f) \in {\rm D}_\loc({\bf \Delta})$
 and 
\begin{equation*}
\frac 12 {\bf \Delta} \Gamma(f)-\Gamma(f, \Delta f)\,\mm \geq \Big{(} {k}\Gamma(f)+|\H_f|^2_{\rm HS}+\frac1{N-\dim_{\rm loc}}(\tr \H_f -\Delta f)^2 \Big{)}\,\mm.
\end{equation*}
In particular,  the singular part  of the measure  ${\bf \Delta} \Gamma(f)$ is non-negative.
\end{proposition}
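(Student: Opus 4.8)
The plan is to deduce the measure-valued Bochner inequality from the already-established integrated form \eqref{eq:prop:infinitedim} (in its $\mathrm{BE}_\loc$ version) together with the characterization of the measure-valued Laplacian. First I would fix $f\in{\rm TestF}_\loc(\E)$, so that $\Delta f\in\V_\loc\cap L^\infty_\loc$ and $\Gamma(f)\in L^\infty_\loc$; by the standing assumption $\be$ we already know $\Gamma(f)^{1/2}\in\V_\loc$, hence $\Gamma(f)=\big(\Gamma(f)^{1/2}\big)^2\in\V_\loc$ as well (product of a bounded $\V_\loc$ function with itself). The goal is to show that the linear functional
\[
\varphi\ \longmapsto\ -\tfrac12\int\Gamma\big(\Gamma(f),\varphi\big)\,\d\mm-\int\Gamma(f,\Delta f)\,\varphi\,\d\mm,
\]
defined a priori for $\varphi\in\V^\infty$ with compact support and $\Gamma(\varphi)\in L^\infty$, is represented by a signed Radon measure which, moreover, dominates the absolutely continuous measure with density $k\,\Gamma(f)+|\H_f|^2_{\rm HS}+\frac1{N-\dim_{\rm loc}}(\tr\H_f-\Delta f)^2$. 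Proposition \ref{prop:finitedim}, in its local form, says precisely that this functional applied to nonnegative such $\varphi$ is bounded below by $\int\big(k\,\Gamma(f)+|\H_f|^2_{\rm HS}+\cdots\big)\varphi\,\d\mm$.

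The key step is then to recognize that the difference functional
\[
T(\varphi):=-\tfrac12\int\Gamma\big(\Gamma(f),\varphi\big)\,\d\mm-\int\Gamma(f,\Delta f)\,\varphi\,\d\mm-\int\Big(k\,\Gamma(f)+|\H_f|^2_{\rm HS}+\tfrac1{N-\dim_{\rm loc}}(\tr\H_f-\Delta f)^2\Big)\varphi\,\d\mm
\]
is a \emph{positive} linear functional on the cone of nonnegative test functions $\varphi$, and hence, by a standard Riesz-type representation argument on a quasi-regular Dirichlet space (together with the cut-off functions $\nchi_\ell$ to localize), is given by integration against a nonnegative Radon measure $\mu_T$ charging no capacity-zero set. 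Rearranging, the functional $\varphi\mapsto-\int\Gamma(\Gamma(f),\varphi)\,\d\mm$ equals $\int\varphi\,\d\big(2\mu_T+\nu\big)$ where $\nu$ is the signed measure $\big(2\Gamma(f,\Delta f)+2k\,\Gamma(f)+2|\H_f|^2_{\rm HS}+\cdots\big)\mm$, an absolutely continuous (locally finite) measure since $\Gamma(f,\Delta f)\in L^1_\loc$ (as $\Gamma(f)^{1/2}\in\V_\loc$ and $\Delta f\in\V_\loc\cap L^\infty_\loc$) and the Hessian terms lie in $L^1_\loc$ by Proposition \ref{prop:finitedim}. This exhibits $\Gamma(f)\in{\rm D}_\loc({\bf\Delta})$ with ${\bf\Delta}\Gamma(f)=2\mu_T+\nu$, and dividing by $2$ gives exactly the asserted inequality
\[
\tfrac12{\bf\Delta}\Gamma(f)-\Gamma(f,\Delta f)\,\mm\ \geq\ \Big(k\,\Gamma(f)+|\H_f|^2_{\rm HS}+\tfrac1{N-\dim_{\rm loc}}(\tr\H_f-\Delta f)^2\Big)\mm,
\]
the inequality being in the sense of measures, with $\mu_T\geq0$ accounting for the gap. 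Since the right-hand side is absolutely continuous, the singular part of ${\bf\Delta}\Gamma(f)$ coincides with $2(\mu_T)_{sing}\geq0$, proving the final assertion.

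I expect the main obstacle to be the representation step: verifying that the positive functional $T$ extends to (or is already defined on) a rich enough class of $\varphi$ so that it genuinely corresponds to a Radon measure charging no capacity-zero sets, in the precise sense demanded by the definition of ${\rm D}({\bf\Delta})$ recalled above (testing against quasi-continuous $\overline\varphi\in L^1(|\mu|)$). This requires an approximation/monotone-class argument, using the cut-offs $\nchi_\ell$ to reduce to compactly supported $\varphi$, the semigroup regularization $\varphi_n=P_{1/n}\varphi$ to gain the needed regularity $\Gamma(\varphi_n),\Delta\varphi_n\in L^\infty$, and quasi-continuity of the approximants; one must check that the bound in Proposition \ref{prop:finitedim} survives these limits — which is exactly the content of the localization already granted there — and that the resulting measure does not charge polar sets, a property inherited from the fact that $\Gamma(f)\in\V_\loc$ and the energy measure of a $\V_\loc$ function charges no capacity-zero set. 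All of this is essentially contained in, or parallel to, Lemma 3.2 of \cite{S-S}, which treats the constant-$k$ case; the only new point is carrying the $L^\infty_\loc$ function $k$ and the finite-dimensional correction term through unchanged, which is harmless since both contribute only an $L^1_\loc$-density on the right.
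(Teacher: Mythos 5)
Your proposal is correct and follows exactly the route of Lemma 3.2 in \cite{S-S} that the paper delegates to: the local form of Proposition \ref{prop:finitedim} makes the residual functional $T$ positive on compactly supported nonnegative test functions, and a Riesz-type representation for quasi-regular Dirichlet spaces (together with the cut-offs $\nchi_\ell$ and the observation that the functional is of locally finite energy, so the representing measure charges no polar sets) yields ${\bf\Delta}\Gamma(f)=2\mu_T+\nu$ with $\mu_T\ge 0$, giving both the measure inequality and the nonnegativity of the singular part. The passage from Savar\'e's constant-$K$, $N=\infty$ case to $k\in L^\infty_\loc$ and finite $N$ is, as you say, a harmless bookkeeping change once Proposition \ref{prop:finitedim} is in hand.
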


\subsection{$\bekn$ condition under time change}
We define  the time-change of the  Dirichlet form $\E$ in the following way.
\begin{definition}[Time change]
Given a  function  $w\in L^\infty_\loc(X, \mm)$, define  the weighted measure $\mm^w:=e^{2w}\mm$  and  the {time-changed Dirichlet form} $\E^w$ on  $ L^2(X, \mm^w)$ by
\[
\E^w(f):=\int \Gamma(f)\,\d \mm~~~~~\forall f\in \V^w
\]
with ${\rm D}(\E^w):=\V^w:=\V^e \cap L^2(X, \mm^w)$. 
Note that indeed $\E^w(f)$ does not depend on $w$ and $(\V^w)^e=\V^e$.
\end{definition}

We leave it to the reader to verify the following simple but fundamental properties.
\begin{lemma}\label{lemma:lap}

i)  $\E^w$ is  a strongly local, symmetric Dirichlet form.  
  
ii) $\E^w$ admits a carr\'e du champ defined on  $(\V^w)_\loc=\V_\loc$  by  $\Gamma^w:=e^{-2w}\Gamma$.

iii)  Furthermore, ${\rm D}_\loc(\Delta^w)={\rm D}_\loc(\Delta)$ and $\Delta^w f=e^{-2w}\Delta f$.

iv)  If in addition $w\in \V_\loc$ then  ${\rm TestF}_\loc(\E^w)={\rm TestF}_\loc(\E)$.
\end{lemma}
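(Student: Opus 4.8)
The plan is to verify the four assertions of Lemma \ref{lemma:lap} by transporting, via the defining identity $\mm^w = e^{2w}\mm$, every structure attached to $\E$ to the corresponding structure attached to $\E^w$, exploiting that the energy functional itself is unchanged and only the underlying $L^2$-space changes. First I would record that $\E^w$ inherits strong locality, symmetry, and the Markov (hence Dirichlet) property essentially for free: strong locality is a statement about $\Gamma$ only (the identity $\Gamma(f+g)=\Gamma(f)$ on $\{g\ \text{const}\}$, and $\Gamma(fg)$-type Leibniz rules), which is intrinsic and does not see the reference measure; symmetry of $\E^w(\cdot,\cdot)=\int\Gamma(\cdot,\cdot)\,\d\mm$ is manifest; the unit contraction (Markov) property holds because post-composing $f$ with a $1$-Lipschitz function that fixes $0$ does not increase $\Gamma(f)$ pointwise $\mm$-a.e.\ — this again is purely a $\Gamma$-level, measure-independent fact. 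The only genuinely measure-dependent point in (i) is closedness of $\E^w$ on $L^2(X,\mm^w)$, which follows because $\V^w=\V^e\cap L^2(X,\mm^w)$ and Cauchy sequences in the $\E^w$-norm are, by definition of $\V^e$, $\E$-Cauchy and $\mm^w$-$L^2$-Cauchy, so they converge $\mm^w$-a.e.\ along a subsequence and the $\E$-limit agrees with the $L^2(\mm^w)$-limit; quasi-regularity likewise transfers since the class of quasi-continuous functions and the nest of compact sets witnessing quasi-regularity are built from $\E$ alone and $\mm^w$ is locally comparable to $\mm$.

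Next, for (ii), I would test the defining relation of the carr\'e du champ for $\E^w$: for $f\in\V_\loc$ and $\varphi\in(\V^w_\infty)_\loc=\V_{\infty,\loc}$ one needs a $\Gamma^w$ with $\int\Gamma^w(f)\varphi\,\d\mm^w=\E^w(f,f\varphi)-\frac12\E^w(f^2,\varphi)$; but the right-hand side equals $\int\big(\Gamma(f,f\varphi)-\frac12\Gamma(f^2,\varphi)\big)\,\d\mm=\int\Gamma(f)\varphi\,\d\mm$ by the carr\'e du champ identity for $\E$, and rewriting $\int\Gamma(f)\varphi\,\d\mm=\int e^{-2w}\Gamma(f)\,\varphi\,\d\mm^w$ identifies $\Gamma^w=e^{-2w}\Gamma$; the local/algebraic bookkeeping (why the identity on $\V$ localizes to $\V_\loc$, and why $(\V^w)_\loc=\V_\loc$) is routine given that $e^{2w}$ is locally bounded above and below. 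For (iii), I would characterize $\Delta^w$ by its defining integration-by-parts relation $\E^w(f,g)=-\int g\,\Delta^w f\,\d\mm^w$ for $g\in\V^w$: the left side is $\int\Gamma(f,g)\,\d\mm=-\int g\,\Delta f\,\d\mm=-\int g\,(e^{-2w}\Delta f)\,\d\mm^w$, so $\Delta^w f=e^{-2w}\Delta f$ on the domain where $e^{-2w}\Delta f\in L^2(\mm^w)$; passing to the local domains removes the integrability issue, giving ${\rm D}_\loc(\Delta^w)={\rm D}_\loc(\Delta)$. Finally (iv) is immediate from (ii) and (iii): $f\in{\rm TestF}_\loc(\E^w)$ means $f\in{\rm D}_\loc(\Delta^w)$ with $\Delta^w f\in\V^w_\loc$ and $\Gamma^w(f)\in L^\infty_\loc$; since $\Delta^w f=e^{-2w}\Delta f$ and $w\in\V_\loc\cap L^\infty_\loc$ guarantees $e^{\pm2w}\in\V_\loc\cap L^\infty_\loc$ (chain rule for $\Gamma$, using $\Gamma(e^{2w})=4e^{4w}\Gamma(w)$), multiplication by $e^{\pm2w}$ preserves $\V_\loc$ and $L^\infty_\loc$, and similarly $\Gamma^w(f)=e^{-2w}\Gamma(f)$ is in $L^\infty_\loc$ iff $\Gamma(f)$ is; hence the two test-function classes coincide.

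The only step requiring a little care — and the one I would flag as the main obstacle, mild though it is — is the transfer of \emph{quasi-regularity and closedness} in (i), i.e.\ checking that $\E^w$ is a bona fide Dirichlet form on $L^2(\mm^w)$ rather than merely a bilinear form; here one must use that $w\in L^\infty_\loc$ so that $\mm$ and $\mm^w$ are mutually absolutely continuous with locally bounded densities, whence they share the same capacity-zero sets, the same quasi-continuous representatives, and the same exceptional nests, and the $\E$-nest of compact sets from Assumption \ref{assumption1}(c) still exhausts $X$ $\mm^w$-q.e.; combined with the observation $(\V^w)^e=\V^e$ already recorded in the Definition, this gives quasi-regularity and the strong-locality of $\E^w$ at no extra cost. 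Everything else is a direct unwinding of definitions, which is why the statement is left to the reader; I would present (i)–(iv) in that order, doing (ii) and (iii) first in spirit (they are the computational heart) and then deducing (iv), with (i) handled by the measure-comparison remark above.
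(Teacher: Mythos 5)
The paper does not supply a proof of this lemma --- it is explicitly ``left to the reader'' --- so there is no argument to compare against. Your proposal correctly fills the gap and proceeds along the expected lines: (ii) and (iii) reduce to rewriting the carr\'e du champ and integration-by-parts identities using $\mm^w=e^{2w}\mm$ and $\E^w(f)=\int\Gamma(f)\,\d\mm$; (iv) follows from (ii), (iii), and the stability of $\V_\loc\cap L^\infty_\loc$ under multiplication by $e^{\pm 2w}$ when $w\in\V_\loc\cap L^\infty_\loc$; and (i) is correctly dissected into the $\Gamma$-level (measure-independent) pieces --- symmetry, strong locality, Markov contraction --- and the measure-dependent pieces --- closedness on $L^2(\mm^w)$ and quasi-regularity --- for which the key observation is exactly yours, that $w\in L^\infty_\loc$ makes $\mm$ and $\mm^w$ locally mutually equivalent, hence sharing capacity-zero sets and quasi-continuous versions. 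Two minor remarks: in justifying $(\V^w)_\loc=\V_\loc$ and ${\rm D}_\loc(\Delta^w)={\rm D}_\loc(\Delta)$ you should, when localizing, choose the local representative $\bar f$ with compact support (multiplying by the cut-off functions $\nchi_\ell$ of Assumption~\ref{assumption1}(c)) so that the $L^2(\mm^w)$ versus $L^2(\mm)$ discrepancies disappear; and the phrase ``$\E$-nest of compact sets from Assumption~\ref{assumption1}(c)'' is a slight misreading --- that assumption gives cut-off functions, not literally a nest, but their supports do furnish the required exhaustion, so the conclusion stands.
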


Our first main result will provide the basic estimate for the Bakry-\'Emery condition under time change.


\begin{theorem}
\label{th-weakbe}
Let $w\in  {\rm D}_\loc({\bf \Delta}) \cap L^\infty_\loc $ be given and assume that $\E$ satisfies ${\rm BE}_\loc({k}, N)$ condition for some $ N\in [2, \infty)$ and ${k} \in L^\infty_\loc$.  Then for any  $N'\in (N, \infty]$,  any $f\in {\rm TestF}_\loc(\E)$ and any non-negative $\varphi \in \V_\infty$ with compact support,  we have
\begin{eqnarray}\label{eq-weakbe-1}
\lefteqn{- \int \Big [\frac12\Gamma^w\big (\Gamma^w(f),  \varphi \big)+\Gamma^w\big(f, \Delta^w f \big)\varphi\Big]\,\d\mm^w}\nonumber \\ &\qquad\qquad\geq& \int \overline{\Gamma^w(f)\varphi  }\,\d \kappa +\frac1{N'}\int (\Delta^w f)^2\varphi\,\d\mm^w
\end{eqnarray}
where  
\[
\kappa:= e^{-2w}\left ({k}-  \frac{(N-2)(N'-2)}{N'-N} \Gamma(w)\right )\mm^w- {\bf \Delta} w.
\]
\end{theorem}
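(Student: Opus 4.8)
The strategy is to rewrite everything in terms of the original objects $\Gamma$, $\Delta$, $\mm$, apply Proposition \ref{prop:measurebochner} (the measure-valued Bochner inequality for $\E$), and then carefully track the extra terms produced by the factors $e^{2w}$ and $e^{-2w}$. Using Lemma \ref{lemma:lap}, $\Gamma^w(f)=e^{-2w}\Gamma(f)$, $\Delta^w f=e^{-2w}\Delta f$, and $\d\mm^w=e^{2w}\,\d\mm$, so the left-hand side becomes
\[
-\int\Big[\tfrac12 e^{-2w}\Gamma\big(e^{-2w}\Gamma(f),\varphi\big)+e^{-2w}\Gamma(f,e^{-2w}\Delta f)\,\varphi\Big]e^{2w}\,\d\mm.
\]
Expanding the Leibniz rule for $\Gamma$ in the arguments $e^{-2w}\Gamma(f)$ and $e^{-2w}\Delta f$ will produce the ``main'' term $-\tfrac12\int e^{-2w}\Gamma(\Gamma(f),\varphi)\,\d\mm-\int e^{-2w}\Gamma(f,\Delta f)\varphi\,\d\mm$ together with correction terms involving $\Gamma(w,\Gamma(f))$, $\Gamma(w,f)\Delta f$ and $\Gamma(w,\varphi)\Gamma(f)$. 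I would then move the weight $e^{-2w}$ inside the first term by writing $e^{-2w}\Gamma(\Gamma(f),\varphi)=\Gamma(\Gamma(f),e^{-2w}\varphi)+2e^{-2w}\Gamma(w,\Gamma(f))\varphi$ so that the measure-valued Bochner inequality of Proposition \ref{prop:measurebochner} applies directly with test function $\psi:=e^{-2w}\varphi$ (which is again a nonnegative function in $\V_\infty$ with compact support when $w\in L^\infty_\loc$, and $\Gamma(\psi)\in L^\infty$ since $w\in\V_\loc$ follows from $w\in{\rm D}_\loc({\bf\Delta})$). This replaces $-\tfrac12\int\Gamma(\Gamma(f),e^{-2w}\varphi)\,\d\mm-\int\Gamma(f,\Delta f)e^{-2w}\varphi\,\d\mm$ by the lower bound
\[
\int\overline{e^{-2w}\varphi}\;\d\Big(\tfrac12{\bf\Delta}\Gamma(f)\Big)\ \ge\ \int e^{-2w}\varphi\Big({k}\Gamma(f)+|\H_f|^2_{\rm HS}+\tfrac1{N-\dim_{\rm loc}}(\tr\H_f-\Delta f)^2\Big)\d\mm,
\]
where I must be slightly careful: the singular part of ${\bf\Delta}\Gamma(f)$ is nonnegative and $e^{-2w}\varphi\ge 0$, so it contributes a nonnegative term that I keep (or discard). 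I also need the identity $\Gamma(f,\Delta f)=\tfrac12\Gamma(\Gamma(f),\cdot)$-duality to handle the term $\int\Gamma(w,f)\Delta f\,\varphi\,\d\mm$: this is where ${\bf\Delta}w$ enters, via $-\int\Gamma(w,\Gamma(f)\varphi)\,\d\mm=\int\overline{\Gamma(f)\varphi}\,\d{\bf\Delta}w$ (using $w\in{\rm D}_\loc({\bf\Delta})$), which accounts for the $-{\bf\Delta}w$ summand in $\kappa$, and the hypothesis ${\bf\Delta}_{sing}w\le 0$ is used precisely here to control the sign of the singular contribution.

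The heart of the matter is then a pointwise algebraic inequality: after collecting all correction terms, the integrand on the right (against $e^{-2w}\varphi\,\d\mm$, i.e.\ against $\varphi\,\d\mm^w$ after the weight is absorbed) must dominate
\[
\Big(e^{-2w}{k}-e^{-2w}\tfrac{(N-2)(N'-2)}{N'-N}\Gamma(w)\Big)\Gamma(f)+\tfrac1{N'}e^{-2w}(\Delta f)^2\cdot e^{-2w}
\]
plus the $-{\bf\Delta}w$ piece already extracted. Concretely, one is left to show, $\mm$-a.e., the inequality
\[
|\H_f|^2_{\rm HS}+\tfrac1{N-n}(\tr\H_f-\Delta f)^2+(\text{cross terms in }\Gamma(w,\cdot)\text{ and }\Delta f)\ \ge\ \tfrac1{N'}(\Delta f)^2-\tfrac{(N-2)(N'-2)}{N'-N}\Gamma(w)\Gamma(f),
\]
where $n=\dim_{\rm loc}\le N$. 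The standard device is to use $|\H_f|^2_{\rm HS}\ge\tfrac1n(\tr\H_f)^2$ and to complete the square in the scalar variable $\tr\H_f$, and separately to estimate the cross terms by Cauchy–Schwarz in the module $L^2(TM)$: $|\Gamma(w,f)|\le\Gamma(w)^{1/2}\Gamma(f)^{1/2}$ and $|\la\nabla w,\H_f\ra|$-type bounds. The precise bookkeeping — in particular the appearance of the curious constant $\tfrac{(N-2)(N'-2)}{N'-N}$ — comes from optimizing a quadratic form in two parameters (the ``radial'' part of $\H_f$ and the coefficient of $\nabla w$), and this is the computational crux where the condition $N\ge 2$ and $N'>N$ are needed for the relevant discriminant/denominators to have the right sign. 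I expect this pointwise optimization to be the main obstacle; everything else is careful but routine manipulation of Leibniz rules, of the duality defining ${\bf\Delta}$, and of the localization already set up in Lemmas \ref{weak-strong be}, \ref{lemma:lap} and Proposition \ref{prop:measurebochner}.

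A final remark on technical hygiene: throughout I work with $f\in{\rm TestF}_\loc(\E)$ and nonnegative $\varphi\in\V_\infty$ of compact support so that all integrations by parts are legitimate (the cut-off functions $\nchi_\ell$ of Assumption \ref{assumption1} guarantee $\Gamma(\varphi)\in L^\infty$ can be arranged, or are assumed), and so that $\Gamma(f),\Gamma(w)\in L^\infty_\loc$ on the (compact) support of $\varphi$; the quasi-continuous representatives $\overline{\Gamma^w(f)\varphi}$ are needed only to pair against the singular part of $-{\bf\Delta}w$, which by hypothesis is $\le 0$, hence can either be kept with the correct sign or dropped to obtain the stated inequality. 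This is consistent with the way $\kappa$ is written as a measure rather than a function.
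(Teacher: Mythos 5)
Your high-level strategy is the same as the paper's: express $\Gamma^w$, $\Delta^w$, $\mm^w$ in terms of $\Gamma$, $\Delta$, $\mm$; expand via the Leibniz rule; peel off the term $\Gamma_2(f;e^{-2w}\varphi)$ and apply the measure-valued Bochner inequality of Proposition~\ref{prop:measurebochner} (with the singular part of $\tfrac12{\bf\Delta}\Gamma(f)$ being nonnegative); produce the $-{\bf\Delta}w$ piece from the duality $-\int\Gamma(w,\Gamma(f)\varphi)\,\d\mm=\int\overline{\Gamma(f)\varphi}\,\d{\bf\Delta}w$; and finally handle a pointwise algebraic inequality in the tangent module. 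This decomposition is indeed the architecture of the paper's proof and all the localization/regularity remarks you make are sound (though the throwaway ``$e^{-2w}\varphi\,\d\mm$, i.e.~$\varphi\,\d\mm^w$'' is a slip: $\varphi\,\d\mm^w=e^{2w}\varphi\,\d\mm$, and $(\Delta^w f)^2\varphi\,\d\mm^w=e^{-2w}(\Delta f)^2\varphi\,\d\mm$).

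However, there is a genuine gap at the point you yourself flag as the crux: you do not actually prove the pointwise algebraic inequality, and the recipe you propose (``use $|\H_f|^2_{\rm HS}\ge\tfrac1n(\tr\H_f)^2$, complete the square in $\tr\H_f$, and Cauchy--Schwarz the cross terms'') is not specific enough to land on the constant $\tfrac{(N-2)(N'-2)}{N'-N}$, and as stated it is not obviously going to work because the cross terms involve the full Hessian $\H_f$ and not just its trace, so discarding $|\H_f|^2_{\rm HS}$ down to its trace \emph{before} absorbing the cross terms loses exactly the part of the Hessian that is needed. The paper's key step (Lemma~\ref{lemma:timechangehessian}) is a change of variable at the level of the Hessian itself: one introduces the matrix
\[
H_{ij}:=(\H_f)_{ij}-w_if_j-w_jf_i+\Gamma(f,w)\,\delta_{ij}
\]
(the time-changed Hessian), computes $|H|^2_{\rm HS}=A_1$ and $\tr H=\Delta f-A_2$, and then applies the \emph{same} standard trace inequality $|H|^2_{\rm HS}+\tfrac1{N'-n}(\tr H-\Delta f)^2\ge\tfrac1{N'}(\Delta f)^2$ to the new matrix $H$, not to $\H_f$. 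All the cross terms in $\Gamma(f,w)$, $\Gamma(w,\Gamma(f))$, $\tr\H_f\cdot\Gamma(f,w)$ are absorbed by choosing this particular $H$. The residual mismatch between using $\tfrac1{N-n}$ (which Bochner gives you) and $\tfrac1{N'-n}$ (which you want) produces a further explicit quadratic form in $(\Delta f-\tr\H_f,\Gamma(f,w))$ whose positive definiteness (checked using $N\ge2$, $N'>N$) is precisely what forces the constant $\tfrac{(N-2)(N'-2)}{N'-N}$ in front of $\Gamma(f,w)^2$; the passage from $\Gamma(f,w)^2$ to $\Gamma(w)\Gamma(f)$ at the very end is then the one Cauchy--Schwarz step that is actually used, and it has the right sign because that constant is nonnegative. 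Without identifying this modified Hessian and the two-step split $\tfrac1{N-n}=\tfrac1{N'-n}+\text{(rest)}$, the optimization is not ``routine bookkeeping''; it is the content of the theorem.
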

\begin{proof}
By Lemma \ref{lemma:lap}  we know
\begin{eqnarray*}
&&-\frac12 \int \Gamma^w\big(\Gamma^w(f),  \varphi\big)\,\d\mm^w-\int \Gamma^w(f, \Delta^w f) \varphi\,\d\mm^w\\
& =& -\frac12 \int \Gamma\big ( e^{-2w}\Gamma(f), \varphi\big)\,\d\mm-\int \Gamma(f, e^{-2w}\Delta f) \varphi\,\d\mm\\
&=&  \Big (-\frac12 \int \Gamma\big( \Gamma(f), \varphi\big)e^{-2w}\,\d\mm+\int \Gamma(f)\Gamma(w,\varphi)e^{-2w}\,\d\mm\Big )-\Big (\int \Gamma(f, \Delta f)e^{-2w} \varphi\,\d\mm\\
&&-2\int \Delta f\Gamma(f, w) e^{-2w} \varphi\,\d\mm\Big )\\
&=& \Big( -\frac12 \int \Gamma\big( \Gamma(f), e^{-2w}\varphi\big)\,\d\mm-  \int \Gamma\big( \Gamma(f), w\big)e^{-2w} \varphi\,\d\mm\Big)+\int \Gamma(f)\Gamma(w,\varphi)e^{-2w}\,\d\mm\\
&& -\int \Gamma(f, \Delta f)e^{-2w} \varphi\,\d\mm+2\int \Delta f\Gamma(f, w) e^{-2w} \varphi\,\d\mm\\
&=& \underbrace{ \Big \{ -\frac12 \int \Gamma\big( \Gamma(f), e^{-2w}\varphi\big)\,\d\mm-\int \Gamma(f, \Delta f)e^{-2w} \varphi\,\d\mm \Big \}}_{\Gamma_2(f; e^{-2w} \varphi)}+\Big (\int \Gamma\big(w, e^{-2w}\Gamma(f)\varphi\big)\,\d\mm\\
&&-\int \Gamma\big(w, e^{-2w}\Gamma(f)\big)\varphi\,\d\mm \Big ) - \int \Gamma\big( \Gamma(f), w\big)e^{-2w} \varphi\,\d\mm+2\int \Delta f\Gamma(f, w) e^{-2w} \varphi\,\d\mm\\
&=&\Gamma_2(f; e^{-2w} \varphi)+\int \Gamma\big(w, e^{-2w}\Gamma(f)\varphi\big)\,\d\mm-\Big (\int \Gamma\big(w, \Gamma(f)\big)\varphi e^{-2w}\,\d\mm -2\int \Gamma(w)\Gamma(f)e^{-2w}\varphi \,\d\mm\Big )\\
&& - \int \Gamma\big( \Gamma(f), w\big)e^{-2w} \varphi\,\d\mm+2\int \Delta f\Gamma(f, w) e^{-2w} \varphi\,\d\mm\\
&=& (I)+(II)+(III)+(IV)-\Big [\int  \frac{(N-2)(N'-2)}{N'-N} \Gamma(w,f)^2 e^{-2w} \varphi\,\d \mm+\int \overline{ \varphi \Gamma^w(f)} \,\d  {\bf \Delta} w \Big]
\end{eqnarray*}
where
\begin{eqnarray*}
(I)&= &    \Gamma_2(f; e^{-2w} \varphi)- \int \Big (|\H_f|^2_{\rm HS} +\frac1{N-\dim_{\rm loc}}( \Delta f-\tr \H_f)^2\Big )\varphi e^{-2w}\,\d\mm,
\end{eqnarray*}
\begin{eqnarray*}
(II)&=&\int \Big{(}|\H_f|^2_{\rm HS}+2\Gamma(f)\Gamma(w)+({\dim_{\rm loc}}-2) \Gamma(f,w)^2-2\Gamma\big(w,\Gamma(f)\big)\\
&&+2\Gamma(f,w)\tr \H_f \Big{)}\varphi e^{-2w}\,\d\mm,
\end{eqnarray*}
\begin{eqnarray*}
(III)=\frac{1}{N'-\dim_{\rm loc}}\int \Big{(} \Delta f-\tr \H_f+(2-\dim_{\rm loc}) \Gamma(f,w)\Big{)}^2\varphi e^{-2w}\,\d\mm,
\end{eqnarray*}
and
\begin{eqnarray*}
(IV)&=&\int \bigg [ \Big(\frac1{N-\dim_{\rm loc}}-\frac1{N'-\dim_{\rm loc}}\Big) (\Delta f-\tr \H_f)^2\\
&&+2\Big(1-\frac{2-\dim_{\rm loc}}{N'-\dim_{\rm loc} }\Big) (\Delta f-\tr \H_f)\Gamma(f,w)\\
&&+\Big( \frac{(N-2)(N'-2)}{N'-N} -( \dim_{\rm loc}-2)-\frac{( \dim_{\rm loc}-2)^2}{N'-\dim_{\rm loc}}\Big )\Gamma(f,w)^2 \bigg ]\varphi e^{-2w}\,\d\mm.
\end{eqnarray*}
By  Proposition \ref{prop:measurebochner}  we  obtain
$$
(I) \geq  \int {k} \Gamma(f) \overline{e^{-2w}\varphi}\,\d\mm.
$$ 
By Lemma \ref{lemma:timechangehessian} below we get
$$(II)+(III) \geq \frac{1}{N'}\int \big{(} \Delta f\big{)}^2\varphi e^{-2w}\,\d\mm.
$$
As for the last term $(IV)$, it can be checked that  the function in the bracket is positive definite, so $(IV) \geq 0$.

Combining the computations above,  we complete the proof.
\end{proof}

\begin{lemma}\label{lemma:timechangehessian}
For any $f \in {\rm TestF}_\loc(\E)$, we have 
\[
A_1+\frac{1}{N'-\dim_{\rm loc}}A_2^2\geq \frac{1}{N'}\big{(} \Delta f\big{)}^2~~\mm-\text{a.e.} 
\]
where
\begin{eqnarray*}
A_1:=|\H_f|^2_{\rm HS}&+&2\Gamma(f)\Gamma(w)+({\dim_{\rm loc}}-2) \Gamma(f,w)^2\\
&-&2\Gamma(w,\Gamma(f))+2\Gamma(f,w)\tr \H_f 
\end{eqnarray*}
and
\[
A_2:= \Delta f-\tr \H_f-({\dim_{\rm loc}}-2) \Gamma(f,w).
\]
\end{lemma}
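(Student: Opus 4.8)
The plan is to reduce the asserted $\mm$-a.e.\ inequality to a pointwise piece of finite-dimensional linear algebra; the crucial point to uncover is that the somewhat opaque quantity $A_1$ is nothing but the squared Hilbert--Schmidt norm of a suitable \emph{conformal Hessian} of $f$. First I would localize on the strata of Proposition \ref{decomposition}: since $\E$ satisfies ${\rm BE}_\loc({k},N)$ with $N<\infty$, Proposition \ref{prop:finitedim} gives $\mm(E_n)=0$ for all $n>N$ and $\mm(E_\infty)=0$, so it suffices to fix an integer $n$ with $0\le n\le N<N'$ and to argue $\mm$-a.e.\ on $E_n$, where $\dim_{\rm loc}\equiv n$. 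On a set $B\subset E_n$ of finite positive measure I would fix the orthonormal frame $\{e_i\}_{i=1}^n$ of $L^2(TM)|_B$ from Proposition \ref{decomposition} and put $v_i:=\la\nabla f,e_i\ra$, $u_i:=\la\nabla w,e_i\ra$, $H_{ij}:=\H_f(e_i,e_j)$, so that $\mm$-a.e.\ on $B$ one has $\Gamma(f)=|v|^2$, $\Gamma(w)=|u|^2$, $\Gamma(f,w)=\la u,v\ra$, $\tr\H_f=\tr H$ and $|\H_f|^2_{\rm HS}=|H|^2_{\rm HS}$, with $H=(H_{ij})$ a symmetric $n\times n$ matrix. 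The one identity requiring the calculus of \cite{G-N} is $\Gamma\big(w,\Gamma(f)\big)=2\,\H_f(\nabla w,\nabla f)$, obtained from \eqref{eq:hessian} with $h=f$, $g=w$ (first for $w\in{\rm TestF}$, then for $w\in\V_\loc$ by density and the $L^\infty$-bilinearity and continuity of $\H_f$); in the frame it reads $\Gamma(w,\Gamma(f))=2u^{\top}Hv$.

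Next I would introduce the symmetric $n\times n$ matrix $\widetilde H:=H-uv^{\top}-vu^{\top}+\la u,v\ra\,\Id_n$ — i.e.\ the matrix of the bilinear form $X,Y\mapsto\H_f(X,Y)-\la\nabla w,X\ra\la\nabla f,Y\ra-\la\nabla w,Y\ra\la\nabla f,X\ra+\Gamma(f,w)\la X,Y\ra$, which plays the role of the Hessian of $f$ for the time-changed structure — and expand. Using the elementary identities $\tr(uv^{\top})=\la u,v\ra$, $\la H,\,uv^{\top}+vu^{\top}\ra=2u^{\top}Hv$ (symmetry of $H$) and $|uv^{\top}+vu^{\top}|^2_{\rm HS}=2\la u,v\ra^2+2|u|^2|v|^2$, one computes
\[
\tr\widetilde H=\tr H+(n-2)\la u,v\ra,\qquad |\widetilde H|^2_{\rm HS}=|H|^2_{\rm HS}+2|u|^2|v|^2+(n-2)\la u,v\ra^2-4u^{\top}Hv+2\la u,v\ra\,\tr H .
\]
Comparing with the definitions of $A_1$ and $A_2$ — and recalling $\Gamma(w,\Gamma(f))=2u^{\top}Hv$ — this gives exactly $A_1=|\widetilde H|^2_{\rm HS}$ and $A_2=\Delta f-\tr\widetilde H$, so the claim reduces to the inequality $|\widetilde H|^2_{\rm HS}+\frac1{N'-n}\big(\Delta f-\tr\widetilde H\big)^2\ge\frac1{N'}(\Delta f)^2$.

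Finally I would dispatch this last inequality by hand. If $N'=\infty$ it is just $|\widetilde H|^2_{\rm HS}\ge0$. If $N'<\infty$ then $N'-n\ge N'-N>0$, and for $n\ge1$ I would combine the Cauchy--Schwarz bound $(\tr\widetilde H)^2=\la\widetilde H,\Id_n\ra^2\le n\,|\widetilde H|^2_{\rm HS}$ with the standard inequality $\frac{s^2}{a}+\frac{(\sigma-s)^2}{b}\ge\frac{\sigma^2}{a+b}$ (valid for $a,b>0$; apply it with $a=n$, $b=N'-n$, $s=\tr\widetilde H$, $\sigma=\Delta f$) to conclude; the degenerate case $n=0$ is trivial since then $\widetilde H$ is empty and $A_2=\Delta f$.

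The only genuinely delicate point is the bookkeeping in the middle step: once the correct object $\widetilde H=\H_f-\nabla w\otimes\nabla f-\nabla f\otimes\nabla w+\Gamma(f,w)\,\Id$ is written down, the identity $A_1=|\widetilde H|^2_{\rm HS}$ is a short expansion, but recognizing beforehand that the five terms defining $A_1$ must assemble into a single perfect Hilbert--Schmidt square is what makes the lemma work; everything else is routine.
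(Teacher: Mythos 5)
Your proof is correct and follows essentially the same route as the paper: you introduce the very same matrix $\widetilde H=\H_f-\nabla w\otimes\nabla f-\nabla f\otimes\nabla w+\Gamma(f,w)\,\mathrm{Id}$ (which the paper calls $H$), verify $A_1=|\widetilde H|^2_{\rm HS}$ and $A_2=\Delta f-\tr\widetilde H$, and close with the Cauchy--Schwarz bound $(\tr\widetilde H)^2\le\dim_\loc\,|\widetilde H|^2_{\rm HS}$ followed by the weighted quadratic-mean inequality. The only differences are expository: you make explicit the localization onto the strata $E_n$, the passage $\Gamma(w,\Gamma(f))=2\H_f(\nabla w,\nabla f)$ via density from the TestF case of \eqref{eq:hessian}, and the degenerate cases $N'=\infty$ and $\dim_\loc=0$, all of which the paper leaves implicit.
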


\begin{proof}
By Proposition \ref{prop:finitedim},   there exits an orthonormal basis $\{e_i\}_i \subset L^2(TM)$. Then we  denote $\H_f(e_i, e_j)$ by $(\H_f)_{ij}$ and denote $\la \nabla g, e_i \ra$ by $g_i$ for any $g\in \V^e$.  We define a matrix $H:=(H_{ij})$ by
$H_{ij}=(\H_f)_{ij}-w_if_j-w_jf_i+\Gamma(f,w)\delta_{ij}$.
Then we have
\begin{eqnarray*}
\mathop{\sum}_{i,j}H^2_{ij}
&=& \mathop{\sum}_{i,j}\big{(}(\H_f)_{ij}-w_if_j-w_jf_i+\Gamma(f,w)\delta_{ij} \big{)}^2\\
&=&|\H_f|^2_{\rm HS} +2\Gamma(f)\Gamma(w)+({\dim_{\rm loc}}-2) \Gamma(f,w)^2\\
&&~~~-2\Gamma(w,\Gamma(f))+2\Gamma(f,w)\tr \H_f \
= \ A_1
\end{eqnarray*}
 where $\tr \H_f=\mathop{\sum}_{i} (\H_f)_{ii}$.
It can also be seen that 
\[
\tr H =\mathop{\sum}_{i}H_{ii}=\tr \H_f+({\dim_{\rm loc}}-2) \Gamma(f,w)=\Delta f-A_2.
\]
Finally,  we  obtain
\begin{eqnarray*}
A_1+\frac{1}{N'-\dim_{\rm loc}}A_2^2
&=& \| H\|^2_{\rm HS}+\frac{1}{N'-\dim_{\rm loc}}\Big( \tr H-\Delta f \Big )^2\\
&\geq & \frac1{\dim_{\rm loc}}\big( \tr H \big)^2+\frac{1}{N'-\dim_{\rm loc}}\Big( \tr H-\Delta f \Big )^2\
\geq \frac1{N'} \Big( \Delta f \Big )^2
\end{eqnarray*}
which is the thesis.
\end{proof}

\begin{theorem}[BE$({k},N)$ condition under time change]\label{th:bekn}
Let  $\E$ be a Dirichlet form satisfying  the ${\rm BE}_\loc({k},N)$  condition for some  $N\in [2, \infty)$ and  ${k} \in L_\loc^\infty(X, \mm)$.  Assume that  $w\in  {\rm D}_\loc({ \bf \Delta}) \cap L_\loc^\infty$  with ${\bf \Delta} w={\bf \Delta}_{sing} w+{ \Delta}_{ac} w \, \mm$ and ${\bf \Delta}_{sing} w  \leq 0$. Moreover, assume that for some $N'\in (N, \infty]$ and $K'\in\R$   
\begin{equation}\label{eq-th:bekn}
K'\leq e^{-2w}\Big [{k}- \frac{(N-2)(N'-2)}{N'-N} \Gamma(w)- {\Delta}_{ac} w  \Big] 
\end{equation}
$\mm$-a.e.~on $X$.
Then the time-changed Dirichlet form  $\E^w$ on $L^2(X,\mm^w)$ satisfies the ${\rm BE}(K', N')$ condition.  

In particular, we have the following gradient estimate
\[
 \Gamma^w(P^w_t f) +\frac{1-e^{-2K't}}{N'K'}  (\Delta^w P_t^w f)^2\leq e^{-2K't} P^w_t \big (\Gamma^w(f)\big )~~~~~~\mm^w\text{-a.e.}
\]
for all $f\in {\rm D}(\E^w)$.
\end{theorem}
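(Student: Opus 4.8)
\emph{Approach.} The plan is to read this theorem off Theorem~\ref{th-weakbe} together with the local-to-global equivalence Lemma~\ref{weak-strong be}; the hypotheses on $w$ serve only to turn the ``measure coefficient'' $\kappa$ appearing in Theorem~\ref{th-weakbe} into the constant lower bound $K'$. First I would record, via Lemma~\ref{lemma:lap} and the inclusion $w\in{\rm D}_\loc({\bf \Delta})\subset\V_\loc$, that $\E^w$ is a strongly local symmetric Dirichlet form with carr\'e du champ $\Gamma^w=e^{-2w}\Gamma$, generator $\Delta^w=e^{-2w}\Delta$, and ${\rm TestF}_\loc(\E^w)={\rm TestF}_\loc(\E)$. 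Consequently it suffices to establish the inequality \eqref{eq-bevar-loc} defining ${\rm BE}_\loc(K',N')$ for $\E^w$ and then invoke Lemma~\ref{weak-strong be} (applicable since the constant $K'$ lies in $L^\infty$) to upgrade it to the global condition ${\rm BE}(K',N')$.

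\emph{Turning $\kappa$ into $K'$.} To establish \eqref{eq-bevar-loc} for $\E^w$ I would feed the hypothesis ${\rm BE}_\loc(k,N)$ (valid since $N\in[2,\infty)$) into Theorem~\ref{th-weakbe}: for every $f\in{\rm TestF}_\loc(\E)$ and every nonnegative $\varphi\in\V_\infty$ with compact support this yields
\[-\int\Big[\tfrac12\Gamma^w\big(\Gamma^w(f),\varphi\big)+\Gamma^w(f,\Delta^w f)\varphi\Big]\,\d\mm^w\;\ge\;\int\overline{\Gamma^w(f)\varphi}\,\d\kappa+\frac1{N'}\int(\Delta^w f)^2\varphi\,\d\mm^w,\]
with $\kappa=e^{-2w}\big(k-\tfrac{(N-2)(N'-2)}{N'-N}\Gamma(w)\big)\mm^w-{\bf \Delta}w$. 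Using $\mm=e^{-2w}\mm^w$ together with the Lebesgue decomposition ${\bf \Delta}w={\bf \Delta}_{sing}w+\Delta_{ac}w\,\mm$, I would rewrite
\[\kappa=e^{-2w}\Big[k-\tfrac{(N-2)(N'-2)}{N'-N}\Gamma(w)-\Delta_{ac}w\Big]\mm^w-{\bf \Delta}_{sing}w.\]
By hypothesis \eqref{eq-th:bekn} the density of the absolutely continuous part of $\kappa$ is $\ge K'$ $\mm^w$-a.e., while ${\bf \Delta}_{sing}w\le0$; hence $\kappa-K'\mm^w$ is a nonnegative measure charging no capacity-zero sets. Since $\Gamma^w(f)\ge0$ and $\varphi\ge0$, the quasi-continuous representative $\overline{\Gamma^w(f)\varphi}$ is nonnegative q.e., so $\int\overline{\Gamma^w(f)\varphi}\,\d\kappa\ge K'\int\Gamma^w(f)\varphi\,\d\mm^w$. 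Substituting into the display above gives exactly \eqref{eq-bevar-loc} for $\E^w$ with constant $K'$ and parameter $N'$, at least for $f$ in the test class.

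\emph{Finishing and the main obstacle.} It then remains to remove the restriction to test functions in \eqref{eq-bevar-loc} (an approximation in the spirit of Proposition~\ref{prop:finitedim} and of the density of ${\rm TestF}_\loc$), to apply Lemma~\ref{weak-strong be} to $\E^w$, and finally to note that the displayed gradient estimate $\Gamma^w(P^w_t f)+\tfrac{1-e^{-2K't}}{N'K'}(\Delta^w P^w_t f)^2\le e^{-2K't}P^w_t(\Gamma^w(f))$ is the standard self-improved semigroup consequence of ${\rm BE}(K',N')$ (see \cite{EKS-O,S-S}). I expect the real difficulty to lie in this last bundle of ``soft'' steps rather than in the algebra: one must check that $\E^w$ still meets the structural standing assumptions required by Lemma~\ref{weak-strong be} and by the self-improvement machinery — most delicately the existence of good cut-off functions for $\E^w$, where $\Gamma^w(\nchi_\ell)=e^{-2w}\Gamma(\nchi_\ell)$ is only locally controlled because $w\in L^\infty_\loc$ — and that the various limiting procedures (replacing $\varphi$ by $\nchi_\ell P^w_{1/n}\varphi$ and passing to the limit) remain compatible with the measure term $\d\kappa$, which carries the singular part ${\bf \Delta}_{sing}w$.
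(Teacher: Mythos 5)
Your proposal follows the same line as the paper's proof: invoke Theorem~\ref{th-weakbe} to obtain the $\kappa$-inequality for test functions, decompose $\kappa$ into absolutely continuous and singular parts against $\mm^w$ and compare it with $K'\,\mm^w$ using hypothesis \eqref{eq-th:bekn} together with ${\bf \Delta}_{sing}w\le0$, then extend the admissible class of $f$'s and pass from ${\rm BE}_\loc$ to ${\rm BE}$ via Lemma~\ref{weak-strong be} and Remark~\ref{rem-be-weak}. The paper does carry out the extension step you gesture at (steps i)--ii): approximating by $f_n=P_{1/n}f'$ and controlling $\Gamma^w(\Gamma^w(f_n),\varphi)$ via Lemma~\ref{2diff}, then noting in step ii) that the conditions are $w$-invariant for $w\in\V^\infty_\loc$), but it is essentially silent on the structural hypotheses for $\E^w$ (cut-off functions) that you rightly flag, so your proposal faithfully reproduces the paper's argument, including its level of rigor at that point.
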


\begin{proof} Given the estimate  \eqref{eq-weakbe-1} from the previous Theorem, we iteratively will extend the class of functions for which it holds true.

i) Our first claim is that  \eqref{eq-weakbe-1} holds for all  $f\in {\rm D}_\loc(\Delta)\cap L^\infty_\loc$ with $\Delta f\in\V_\loc$
and all compactly supported, nonnegative $\varphi\in\V^\infty$ with $\Gamma(\varphi)\in L^\infty$.
Indeed, given such $f$ and $\varphi$, choose $f'\in {\rm D}(\Delta)\cap L^\infty$ with $\Delta f\in\V$ such that $f=f'$ on a neighborhood of $\{\varphi\not=0\}$.  Choose $f_n\in  {\rm TestF}(\E)$ with $f_n\to f'$ in ${\rm D}(\Delta)$ and $\Delta f_n\to \Delta f'$ in $\V$.
(For instance, put $f_n=P_{1/n}f'$.)
Applying  \eqref{eq-weakbe-1} with $f_n$ in the place of $f$ and passing to the limit $n\to\infty$ yields the claim. Indeed, 
$$\Gamma^w(\Gamma^w(f_n),\varphi)=e^{-4w}\Big[\Gamma(\Gamma(f_n),\varphi)-2\Gamma(f_n)\cdot \Gamma(w,\varphi)\Big]$$
which according to Lemma \ref{2diff} for $n\to\infty$ converges to 
$$e^{-4w}\Big[\Gamma(\Gamma(f),\varphi)-2\Gamma(f)\cdot \Gamma(w,\varphi)\Big]=\Gamma^w(\Gamma^w(f),\varphi)$$
since $f_n\to f$ in ${\rm D}(\Delta)$.

ii) Our next claim is that   \eqref{eq-weakbe-1} holds for all  $f\in {\rm D}_\loc(\Delta^w)\cap L^\infty_\loc(\mm^w)$ with $\Delta^w f\in(\V^w)_\loc^\infty$
and all compactly supported, nonnegative $\varphi\in(\V^w)^\infty$ with $\Gamma^w(\varphi)\in L^\infty(\mm^w)$. Indeed,  the conditions on $f$ and on $\varphi$ will not depend  on $w$ as long as  $w\in \V_\loc^\infty$ which is the case by assumption. This is obvious in the case of the conditions on $\varphi$.
For the conditions on $f$, note that $\Delta^w=e^{-2w}\Delta$ and $\Gamma^w(\Delta^w)\le 2e^{-4w}\big[\Gamma(\Delta f)+4(\Delta f)^2\, \Gamma(w)\big]$.

iii) Taking into account the assumptions on ${\bf \Delta} w$
and on $K'$, according to Lemma \ref{weak-strong be} together with Remark \ref{rem-be-weak} the assertion of the second claim already proves BE$(K',N')$.

iv) The gradient estimate is a standard consequence of BE$(K',N')$, see \cite{EKS-O}.
\end{proof}

\begin{remark}
Let   $e^{2w}=\rho$ and $N'=\infty$  in  Theorem \ref{th:bekn}. Then  condition \eqref{eq-th:bekn}  becomes
\[
K'\leq K\rho^{-1}+\frac12 \Delta \rho^{-1}-N\Gamma(\rho^{-\frac12})=\rho^{-\frac{N}2}\Big(  K-\frac1{N-2} \Delta \Big )\rho^{\frac{N}2-1}.
\]
  Furthermore, when $N=2$, the condition is $K'\rho \leq K-\frac12 \Delta \ln \rho$.
\end{remark}

\section{Time change and the Lott-Sturm-Villani condition
}\label{sec:rcd}

In this section, we will  study  synthetic lower Ricci bounds under time change in the setting of metric measure spaces. More precisely, we will 
derive the transformation formula for the curvature-dimension condition  of Lott-Sturm-Villani under time change.

\subsection{Metric measure  spaces and time change}
\begin{assumption}\label{assumption}
In this section we will assume that  the metric measure space $\ms$ fulfils  the following conditions:
\begin{itemize}
\item [i)] $(X,\d)$ is a complete and separable geodesic  space;
\item [ii)]$\mm$ is a $\d$-Borel measure and $\supp{\mm}=X$;
\item [iii)]  $(X,\d,\mm)$ satisfies the Riemannian curvature-dimension condition $\rcdkn$  for some $K\in \R$ and $N\in [1, \infty]$.
\end{itemize}
\end{assumption}
Given such a metric measure space $\ms$, the energy is defined on $L^2(X,\mm)$ by 
\begin{eqnarray*}
\E(f) &:=& \inf\!\Big\lbrace \liminf_{n \to \infty} \int_X \lip{f_n}^2\d\mm : f_n \in \Lip_{b}(X),\ \! f_n \to f \text{ in } L^2(X,\mm)\Big\rbrace\\
&=&\int_X\big| \D f|^2\,\d\mm
\end{eqnarray*}
where $\lip{f}(x) := \limsup_{y\to x} \vert f(x) - f(y)\vert/\!  \d(x,y)$ denotes the \emph{local Lipschitz slope}  and $|\D f|(x)$ denotes the minimal weak upper gradient at $x\in X$. We refer to \cite{AGS-C} for details.
As a part of the definition of RCD condition, $\E(\cdot)$ is a quadratic form. By polarization,  this defines a quasi-regular, strongly local, conservative Dirichlet form admitting a carr\'e du champ $\Gamma(f):=|\D f|^2$.
We use the notations $W ^{1,2}\ms=\V={\rm D}(\E)$ and   $S^2\ms=\V^e$.



\begin{definition}
Given $w\in L^2_\loc(X,\mm)$, the  time-changed  metric measure space  is defined as  $(X, \d^w, \mm^w)$ where  $\mm^w:=e^{2w}\mm$ and $\d^w$ is given  by
\begin{equation}
\d^w(x, y):=\sup \big \{\phi(x)-\phi(y): \phi \in \V_\loc \cap C(X),  |\D\phi| \leq e^w \ \mm\text{-a.e. in}~ X \big \}
\end{equation}
for any $x, y \in X$.
\end{definition}

\begin{remark}
There are various alternative definitions for the distance function under time change. The first of them is
\begin{equation}
\d_w(x, y):=\sup \big \{\phi(x)-\phi(y): \phi \in \Lip_\loc(X),  \lip\phi \leq e^w \ \mm\text{-a.e. in}~ X \big \}.
\end{equation}
Since $\Lip_\loc(X)\subset  \V_\loc \cap C(X)$ and 
$ |\D\phi|\le \lip\phi$ for $\phi\in \Lip_\loc(X)$, obviously $\d_w\le \d^w$.
It is easy to see that in both of these definitions, the class of functions under consideration can equivalently be restricted to those with compact supports. In other words,
$\d^w(x, y)=\sup \big \{\phi(x)-\phi(y): \phi \in \V \cap C_c(X),  |\D\phi| \leq e^w\ \mm\text{-a.e. in}~ X \big \}$
and
$\d_w(x, y)=\sup \big \{\phi(x)-\phi(y): \phi \in \Lip_c(X),  \lip\phi \leq e^w\ \mm\text{-a.e. in}~ X \big \}$.
\end{remark}

Moreover, we consider 
the metric $e^w\odot \d$ defined in a dual way by
\begin{equation}
\big(e^w\odot \d\big)(x,y):=\inf\Big\{\int_0^1 e^w (\gamma_s)\, |\dot\gamma_s|\,\d s:  \gamma \in {\rm AC}([0,1], X), \gamma_0=x,\gamma_1=y\Big\}.
\end{equation}
Of particular interest is 
the metric $e^{\bar w}\odot \d$
with $w$ replaced by its  upper semicontinuous envelope $\bar w$ defined by
$$\bar w(x):=\limsup_{y\to x}w(y).$$

%
%
%
%

\begin{lemma}\label{Gamma^w} Assume that $w$ is continuous a.e.~on $X$.
Then each of the metrics $\d^w, \d_w$, $e^w\odot \d$ and $e^{\bar w}\odot \d$ induces the same minimal weak upper gradient
$|\D^wf|=e^{-w}|\D f|$ $\mm$-a.e.~on $X$ for each $f\in L^2_\loc(X)$. In particular,
$$\Gamma^w=e^{-2w}\, \Gamma\quad\text{on }\V_\loc=\V_\loc^w$$
where here and henceforth  $\Gamma^w$ denotes  the carr\'e du champ operator induced by the metric measure space $(X,\d^w,\mm^w)$.
\end{lemma}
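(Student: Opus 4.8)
The plan is to prove the identity $|\D^w f| = e^{-w}|\D f|$ for the largest of the four metrics, namely $\d^w$, and separately for the smallest, and then invoke a sandwiching argument together with monotonicity of minimal weak upper gradients under $1$-Lipschitz identity maps. First I would record the ordering $\d_w \le \d^w$ (already observed in the Remark) and, assuming $w$ continuous $\mm$-a.e., the further comparisons with $e^w \odot \d$ and $e^{\bar w}\odot \d$; since $\bar w \ge w$ we have $e^w \odot \d \le e^{\bar w}\odot \d$, and a duality/length-space argument (Kuwada-type, using that $X$ is geodesic and that test functions $\phi$ with $|\D\phi|\le e^w$ give rise to the length functional $\int_0^1 e^{\bar w}(\gamma_s)|\dot\gamma_s|\,\d s$ along curves) identifies $\d^w$ with $e^{\bar w}\odot \d$ up to the a.e.-continuity of $w$. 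The point is that all four metrics agree $\mm\otimes\mm$-a.e. and, more importantly, they are comparable in a way that makes the induced Cheeger energies coincide.

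The core analytic step is the computation of the minimal weak upper gradient for the distance $\d^w$. For the upper bound $|\D^w f|\le e^{-w}|\D f|$: given $f$ with a weak upper gradient $g=|\D f|$ with respect to $\d$, I would show $e^{-w}g$ is a weak upper gradient with respect to $\d^w$ by testing against $\d^w$-absolutely continuous curves $\gamma$; reparametrising and using that the $\d^w$-speed of $\gamma$ equals $e^{-\bar w(\gamma_s)}$ times its $\d$-speed (the dual representation), the upper-gradient inequality along $\gamma$ transforms correctly, with the $\bar w$ versus $w$ discrepancy being invisible $\mm$-a.e. by the continuity assumption. For the lower bound $|\D^w f|\ge e^{-w}|\D f|$: this is the reverse implication, and here I would run the same argument in the other direction, passing from $\d^w$-a.e. curves back to $\d$-a.e. curves via the inverse reparametrisation, using that $e^{\pm w}$ is locally bounded so that absolute continuity is preserved both ways. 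Both inclusions rely on the fact, standard in the Sobolev-space-on-metric-measure-spaces literature (Ambrosio–Gigli–Savaré), that the minimal weak upper gradient is characterised through the modulus of curve families / the test-plan formulation, and that bi-Lipschitz-on-compacta changes of metric transform it by the pointwise conformal factor.

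Once $|\D^w f| = e^{-w}|\D f|$ is established for one of the metrics, the remaining three follow: the identity map between any two of these metric measure spaces is locally bi-Lipschitz with conformal factor tending to $1$ (indeed the metrics are locally comparable with constants controlled by $\esup$ of $w$ on compacta), hence induces an isomorphism of the local Sobolev classes with the same minimal weak upper gradient; combined with the sandwich $\d_w \le \d^w$ and $e^w\odot\d \le e^{\bar w}\odot\d$ and the coincidence of the two extreme ones just proved, all four give the same $|\D^w f|$. The formula $\Gamma^w = e^{-2w}\Gamma$ on $\V_\loc = \V_\loc^w$ is then immediate from $\Gamma^w(f) = |\D^w f|^2$ and the definition of the carré du champ.

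The main obstacle is the $w$ versus $\bar w$ subtlety in the dual (length) representation of $\d^w$: a priori the supremum defining $\d^w$ over functions with $|\D\phi|\le e^w$ $\mm$-a.e. could be strictly larger than the infimum of the length functional $\int_0^1 e^{\bar w}(\gamma_s)|\dot\gamma_s|\,\d s$, and matching them requires the almost-everywhere continuity of $w$ so that $\bar w = w$ $\mm$-a.e. and so that the pointwise evaluation $e^{\bar w}(\gamma_s)$ along a generic curve is $\mm$-a.e. meaningful. Verifying that these exceptional null sets do not affect the weak-upper-gradient inequality — which only needs to hold along almost every curve — is where the real care is needed, but this is precisely the kind of argument already used in \cite{AGS-C} and in the construction of $\d^w$ above.
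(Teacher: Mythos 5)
Your plan is correct in substance but follows a genuinely different route from the paper's. You work through the \emph{test-plan/weak-upper-gradient} characterization of $|\D f|$: you transform the upper-gradient inequality along a.e.\ curve, reparametrising to pass between the metric speed in $\d$ and in $\d^w$, and you control the $w$ versus $\bar w$ discrepancy by a null-set argument along test plans. You then order the four metrics, prove the statement for two ``extreme'' ones, and invoke monotonicity of the minimal weak upper gradient under metric comparison. The paper instead works through the \emph{Cheeger relaxation} characterization: it shows that at every point $x$ of continuity of $w$ all four metrics are $e^{\pm\varepsilon}$--comparable to $e^{w(x)}\d$ on a small ball, deduces that the pointwise Lipschitz slopes satisfy $\llip^*(f)(x)=e^{-w(x)}\llip(f)(x)$ at $\mm$-a.e.\ $x$, and then notes that the $L^2$-relaxation of $\llip^*$ w.r.t.\ $\mm^w$ reduces to the relaxation of $\llip$ w.r.t.\ $\mm$ (the two measures being locally equivalent). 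That argument is shorter and avoids curves entirely, since the pointwise comparability makes the $\bar w$ vs $w$ issue visibly harmless. Your route is more in the spirit of the curve-modulus/test-plan theory and works, but it implicitly reproves part of Lemma~\ref{equ-dist} (the identification $\d^w=e^{\bar w}\odot\d$) en route, and it relies more heavily on $w\in L^\infty_\loc$ to preserve absolute continuity of curves under reparametrisation.

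Two small slips worth flagging. First, if $\d^w=e^{\bar w}\odot\d$ then the $\d^w$-metric speed of a curve is $e^{\bar w(\gamma_s)}$ \emph{times} (not $e^{-\bar w(\gamma_s)}$ times) its $\d$-speed; your sign is reversed, though the cancellation in the upper-gradient inequality still comes out correctly once the factor is placed on the right side. Second, ``conformal factor tending to $1$'' overstates it: the identity map between $(X,\d)$ and $(X,\d^w)$ is locally bi-Lipschitz with a factor $\approx e^{\pm w(x)}$, not close to $1$ — what is close to $1$ is the ratio between any two of the \emph{transformed} metrics $\d^w,\d_w,e^w\odot\d,e^{\bar w}\odot\d$ on small balls around continuity points of $w$. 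This is precisely the local comparability step the paper isolates, so your ``sandwich plus bi-Lipschitz'' closing really is the same idea as the paper's, just arrived at through a longer detour.
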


\begin{proof}
Assume that $w$ is continuous at $x\in X$. Then for each $\varepsilon>0$ there exists $\delta>0$ such that $|w(x)-w(y)|<\varepsilon$ for $y\in B_\delta(x)$.
Hence, by using appropriate truncation arguments it is easy to see that for each $\d^*\in\big\{
\d^w, \d_w, e^w\odot \d, e^{\bar w}\odot \d \big\}$
and all  $y\in B_\delta(x)$ 
$$e^{w(z)-\varepsilon}\cdot \d(x,y)\quad \le \quad
\d^*(x,y) \quad \le \quad e^{w(z)+\varepsilon}\cdot \d(x,y).$$
Hence,  $\llip^*(f)(x)=e^{-w(x)}\,\llip(x)$
for the respective local Lipschitz constants associated with $\d^*$.

To obtain the  respective minimal weak upper gradient for $f\in L^2(X,\mm)$ 
associated with $\d^*$, one has to consider the relaxations of $\llip^*(f)$ w.r.t.~the measure $\mm^w=e^{2w}\mm$. This, however, amounts to study the relaxations of the original $\llip(f)$ w.r.t.~the measure $\mm$.
Thus the claimed identify 
$\Gamma^w(f)=e^{-2w}\, \Gamma(f)$ $\mm$-a.e.~on $X$ follows.
\end{proof}

In the  following lemma we show the coincidence of $\d^w$ and $e^{\bar w}\odot \d$,  see  \cite{KSZ-ARMA} for  related results.

\begin{lemma}\label{equ-dist} Assume that $w$ is continuous a.e.~on $X$.
Then $$\d^w=e^{\bar w}\odot \d.$$
In particular,  $\d^w$ is a geodesic metric.
\end{lemma}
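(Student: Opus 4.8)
The plan is to prove the two inequalities $\d^w \le e^{\bar w}\odot \d$ and $e^{\bar w}\odot \d \le \d^w$ separately, and then deduce the geodesic property from the fact that $e^{\bar w}\odot \d$ is defined as an infimum over absolutely continuous curves.

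For the inequality $\d^w \le e^{\bar w}\odot \d$, I would argue as follows. Fix $x,y\in X$ and an admissible competitor $\phi\in \V_\loc\cap C(X)$ with $|\D\phi|\le e^w$ $\mm$-a.e. Since $w$ is continuous $\mm$-a.e., Lemma \ref{Gamma^w} tells us that the minimal weak upper gradient of $\phi$ with respect to the metric $e^{\bar w}\odot \d$ is $|\D^w\phi| = e^{-w}|\D\phi| \le 1$ $\mm$-a.e. Hence $\phi$ is $1$-Lipschitz with respect to $e^{\bar w}\odot \d$ — here I use that in the RCD/infinitesimally Hilbertian setting (or more generally in a length space) a function with minimal weak upper gradient bounded by $1$ has a $1$-Lipschitz representative, i.e. the Sobolev-to-Lipschitz property, which holds on RCD spaces and is inherited by $(X, e^{\bar w}\odot\d,\mm^w)$ since the latter is a length space with the same Dirichlet form structure. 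Therefore $\phi(x)-\phi(y)\le \big(e^{\bar w}\odot\d\big)(x,y)$, and taking the supremum over $\phi$ gives the claim.

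For the reverse inequality $e^{\bar w}\odot \d \le \d^w$, the natural approach is to exhibit $\d^w(\cdot, y)$ itself — or suitable truncations of it — as a competitor in the definition of $\d^w$ and to run a curve-integration argument. Concretely, fix $y$ and set $u(x):=\d^w(x,y)$; one checks $u\in C(X)$ (it is $1$-Lipschitz for $\d^w$, which in turn is locally comparable to $\d$ by the local boundedness of $w$, so $u$ is locally Lipschitz for $\d$ hence in $\V_\loc\cap C(X)$), and $|\D u|\le e^{\bar w}$ $\mm$-a.e. Then for any $\gamma\in \mathrm{AC}([0,1],X)$ from $x$ to $y$ one estimates
\[
u(x)=u(\gamma_0)-u(\gamma_1)\le \int_0^1 \big|\tfrac{d}{ds}u(\gamma_s)\big|\,\d s\le \int_0^1 \llip^*(u)(\gamma_s)\,|\dot\gamma_s|\,\d s,
\]
where $\llip^*$ is the slope for $e^{\bar w}\odot\d$; using $\llip^*(u)\le 1$ (again from Lemma \ref{Gamma^w} plus Sobolev-to-Lipschitz, now applied to $u$) and taking the infimum over $\gamma$ gives $\d^w(x,y)=u(x)\le \big(e^{\bar w}\odot\d\big)(x,y)$. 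The geodesic property then follows because $\big(e^{\bar w}\odot\d\big)(x,y)$ is an infimum of lengths of $\mathrm{AC}$-curves, and on a complete space with $\mm$ having full support one extracts a length-minimizing curve (the integrand $e^{\bar w}$ being lower semicontinuous and bounded below on the relevant compact sets makes the length functional lower semicontinuous under uniform convergence of curves).

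**Main obstacle.** The delicate point is the regularity of $u=\d^w(\cdot,y)$ and the precise passage between the slope, the minimal weak upper gradient, and Lipschitz constants under the change of metric — in particular justifying that $\llip^*(u)\le 1$ and $|\D u|\le e^{\bar w}$ hold $\mm$-a.e. rather than merely $\mm$-a.e. on regions where $w$ is continuous, and that the $\d$-local-Lipschitz bound needed to place $u$ in $\V_\loc$ is uniform enough. This is exactly where the a.e.-continuity of $w$ and the local two-sided bound $e^{w-\eps}\d \le \d^* \le e^{w+\eps}\d$ from the proof of Lemma \ref{Gamma^w} must be invoked carefully, and where a comparison with the cited results of \cite{KSZ-ARMA} streamlines the argument.
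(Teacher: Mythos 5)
Your argument has three substantive problems, and only one of the two inequalities is actually addressed.

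\textbf{The ``reverse'' inequality is never proven.} Your computation for $e^{\bar w}\odot\d \le \d^w$ starts from $u=\d^w(\cdot,y)$, integrates along a curve, and concludes $\d^w(x,y)=u(x)\le(e^{\bar w}\odot\d)(x,y)$. That is the \emph{same} inequality $\d^w\le e^{\bar w}\odot\d$ a second time, not its reverse. To get $e^{\bar w}\odot\d\le\d^w$, you should instead take $f:=(e^{\bar w}\odot\d)(x,\cdot)$, check that $f\in\V_\loc\cap C(X)$ with $|\D f|\le e^{w}$ $\mm$-a.e.\ (this uses that $\llip^w(f)\le1$ at every continuity point of $w$, hence $\mm$-a.e.), and then plug $f$ into the sup defining $\d^w$, which yields $\d^w(x,z)\ge f(x)-f(z)=(e^{\bar w}\odot\d)(x,z)$. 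This is exactly step (iv) of the paper's proof.

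\textbf{The Sobolev-to-Lipschitz step is circular / unjustified.} For the inequality $\d^w\le e^{\bar w}\odot\d$ you invoke the Sobolev-to-Lipschitz property of $(X,e^{\bar w}\odot\d,\mm^w)$ to promote a bound $|\D^w\phi|\le1$ $\mm^w$-a.e.\ to a pointwise $1$-Lipschitz bound. But that property is only established later, in Lemma~\ref{SobLip}, and there specifically for $(X,\d^w,\mm^w)$, whose proof relies directly on the sup-definition of $\d^w$ --- not for $(X,e^{\bar w}\odot\d,\mm^w)$. Asserting that it is ``inherited'' is precisely what needs $\d^w=e^{\bar w}\odot\d$, the result you are trying to prove. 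The paper avoids this entirely: it first treats the case of continuous $w$ (where $\d^w=e^w\odot\d$ follows by local comparison of two geodesic metrics), and then obtains $\d^w\le e^{\bar w}\odot\d$ for general $w$ by choosing continuous $w_n\downarrow\bar w$ and letting $n\to\infty$ in $\d^w\le\d^{w_n}=e^{w_n}\odot\d$.

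\textbf{The geodesic property does not follow by lower semicontinuity of the length.} You appeal to lower semicontinuity of $\gamma\mapsto\int e^{\bar w(\gamma_s)}|\dot\gamma_s|\,\d s$ under uniform convergence. But $\bar w$ is the \emph{upper} semicontinuous envelope, so $e^{\bar w}$ is upper semicontinuous, not lower; the standard LSC-of-length argument requires an LSC integrand and does not apply. The paper instead proves directly that $\d^w$ is a length metric: assuming it is not, there are $x\neq y$ with $\d^w(x,y)<2r$ and $B^w_r(x)\cap B^w_r(y)=\emptyset$, and the function $f=\d^w(\cdot,X\setminus B_r(x))-\d^w(\cdot,X\setminus B_r(y))$ is continuous with $\Gamma^w(f)\le1$, hence admissible for $\d^w$, forcing $\d^w(x,y)\ge f(x)-f(y)=2r$, a contradiction; local compactness then upgrades length to geodesic. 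This direct argument is an independent input to step (ii) and is not recoverable from your plan.
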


\begin{proof} i) Let us first prove that $\d^w$ is a geodesic metric. 
Since $X$ is locally compact w.r.t.~the metric $\d$ and since the metrics $\d^w$ and $\d$ are locally equivalent, the space $X$ is also locally compact w.r.t.~the metric $\d^w$. Therefore, it suffices to prove that  $\d^w$ is a length metric. Assume this is not the case. Then there exist points $x\not= y$ with $\d^w(x,y)<2r$ and $B^w_r(x)\cap B^w_r(y)=\emptyset$. Put 
$$f=\d^w(.,X\setminus B_r(x))-\d^w(.,X\setminus B_r(y)).$$
It is easy to verify that $\Gamma^w(f)\le 1$ and obviously $f$ is continuous.
Hence, by the very definition of $\d^w$
$$\d^w(x,y)\ge f(x)-f(y)=2r$$
which is in contradiction to our initial assumption.

ii) Now let us consider the particular case where $w$ is continuous on all of $X$.
Then  
$\d^{w}=e^{w}\odot \d$.
Indeed, both metrics are geodesic metrics on $X$ and coincide up to multiplicative pre-factors $e^{\pm\varepsilon}$ on suitable neighborhoods $B_\delta(x)$ of each point $z\in X$.

iii) To deal with the general case, let us choose a decreasing sequence of continuous functions $w_n$ with $w_n\downarrow \bar w$ as $n\to \infty$.
Then 
$\d^{w_n}=e^{w_n}\odot \d$  for each $n$  by the preceding case ii)
and thus by monotonicity for all $x,y$
$$\d^w(x,y)\le\inf_n \d^{w_n}(x,y)=\inf_n(e^{w_n}\odot \d)(x,y)=(e^{\bar w}\odot \d)(x,y).$$

iv) To prove the reverse estimate, for given $x\in X$ observe that
$f=(e^{\bar w}\odot \d)(x,.)$ is continuous and obviously 
$\llip^w(f)(y)\le 1$ in each point $y$ of continuity of $w$. Thus, in particular, $\Gamma^w(f)\le 1$ $\mm^w$-a.e.~on $X$. This indeed implies that
$\d^w(x,z)\ge |f(x)-f(z)|=(e^{\bar w}\odot \d)(x,z)$ for each $z\in X$.
\end{proof}

\begin{lemma}\label{SobLip}
Assume that $w$ is continuous a.e.~on $X$.
Then the metric measure space $(X,\d^w,\mm^w)$ has the Sobolev-to-Lipschitz property.
\end{lemma}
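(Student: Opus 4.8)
The plan is to reduce the statement for the time-changed space to the Sobolev-to-Lipschitz property of the original $\rcdkn$ space $\ms$, which holds since RCD spaces enjoy this property. Recall that $(X,\d^w,\mm^w)$ has the Sobolev-to-Lipschitz property if every $f\in W^{1,2}(X,\d^w,\mm^w)$ with $|\D^w f|\le 1$ $\mm^w$-a.e.\ admits a representative $\tilde f$ that is $1$-Lipschitz with respect to $\d^w$. So let such an $f$ be given. By Lemma \ref{Gamma^w}, the time-changed minimal weak upper gradient is $|\D^w f| = e^{-w}|\D f|$ $\mm$-a.e., and moreover $\mm^w = e^{2w}\mm$ is mutually absolutely continuous with $\mm$ since $w\in L^\infty_\loc$; hence the condition $|\D^w f|\le 1$ $\mm^w$-a.e.\ is exactly the statement $|\D f|\le e^{w}$ $\mm$-a.e.\ on $X$ (and $f\in W^{1,2}_\loc(X,\d,\mm)$ at least locally — one works locally and then globalizes).

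The key step is then to show that such an $f$ has a representative that is $1$-Lipschitz for $\d^w$, i.e.\ $|f(x)-f(y)|\le \d^w(x,y)$ for all $x,y$. First I would run the standard Sobolev-to-Lipschitz argument of the original RCD space, but applied to the ``conformally rescaled gradient constraint'': the function $f$ with $|\D f|\le e^{w}$ $\mm$-a.e.\ is, by Lemma \ref{equ-dist} and the definition of $\d^w$, a competitor in the supremum defining $\d^w$ provided it is continuous. So the heart of the matter is upgrading $f$ to a continuous (indeed locally Lipschitz for $\d$) representative. This is where the regularity $w\in L^\infty_\loc$ is essential: on a bounded open set $U$ where $e^{-C}\le e^{w}\le e^{C}$, the bound $|\D f|\le e^{w}\le e^C$ $\mm$-a.e.\ together with the Sobolev-to-Lipschitz property of the $\rcdkn$ space $\ms$ (which holds because $(X,\d)$ is geodesic and length) yields a representative of $f$ that is $e^C$-Lipschitz for $\d$ on $U$; in particular $f$ has a continuous representative, and it lies in $\V_\loc\cap C(X)$.

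Once $f\in \V_\loc\cap C(X)$ with $|\D f|\le e^{w}$ $\mm$-a.e.\ is established, the definition of $\d^w$ directly gives $f(x)-f(y)\le \d^w(x,y)$ for all $x,y\in X$, and applying this to $-f$ as well yields $|f(x)-f(y)|\le \d^w(x,y)$, i.e.\ $f$ is $1$-Lipschitz for $\d^w$. This is precisely the Sobolev-to-Lipschitz property for $(X,\d^w,\mm^w)$. The main obstacle I anticipate is the careful verification that a $W^{1,2}$-function with an $L^\infty_\loc$ pointwise gradient bound admits a continuous (locally Lipschitz) representative — this requires invoking the Sobolev-to-Lipschitz property of the base RCD space together with a localization/patching argument to pass from local Lipschitz bounds (with $\d$-Lipschitz constants $e^{\|w\|_{L^\infty(U)}}$ that blow up as $U$ grows) to global $\d^w$-continuity; the use of $\bar w$ and Lemma \ref{equ-dist} handles the possible discontinuities of $w$ on the $\mm$-null set where it fails to be continuous.
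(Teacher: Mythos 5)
Your proposal follows essentially the same route as the paper's proof: translate the constraint $\Gamma^w(f)\le 1$ into $\Gamma(f)\le e^{2w}$ via Lemma \ref{Gamma^w}, use $w\in L^\infty_\loc$ to get a locally bounded $\d$-gradient, invoke the Sobolev-to-Lipschitz property of the base RCD space on bounded sets to obtain a continuous representative, and then conclude $1$-Lipschitzness for $\d^w$ directly from the supremum definition of $\d^w$. The one step you gesture at but do not spell out — and which the paper handles explicitly — is a preliminary truncation replacing $f$ by a compactly supported $f_B$ with $\Gamma^w(f_B)\le 1$ \emph{globally} (not just on a bounded set), which is what makes the Sobolev-to-Lipschitz property of $(X,\d,\mm)$ actually applicable; you correctly flag this as the anticipated obstacle, so the approach is sound.
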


\begin{proof} Assume that $f\in \V_\loc$ is given with $\Gamma^w(f)\le 1$ $\mm^w$-a.e.~on $X$.
By truncation one can achieve on each bounded set $B$ that $f=f_B$ a.e.~on $B$ for some $f_B$ with bounded support and with $\Gamma^w(f_B)\le 1$ $\mm^w$-a.e.~on $X$.
Since $w\in L^\infty_\loc$, moreover, $\Gamma(f_B)\le C$ $\mm$-a.e.~on $X$. By the Sobolev-to-Lipschitz property of the original metric measure space $(X,\d,\mm)$ it follows that $f=\bar f_B$ a.e.~on $B$ for some $\bar f_B$ with $\Lip(\bar f_B)\le C$. In particular, $\bar f_B$ is continuous and $\Gamma^w(\bar f_B)\le 1$ $\mm^w$-a.e.~on $X$.
Hence, 
$$\d^w(x,y)\ge |\bar f_B(x)-\bar f_B(y)|$$ for all $x,y\in X$ by the very definition of the metric $\d^w$. In other words, $\bar f_B\in \Lip^w(X)$ with $\Lip^w(\bar f_B)\le1$.
Considering these constructions for an open covering of $X$ by such sets $B$, it follows that there exists $\bar f\in  \Lip^w(X)$ with $f=\bar f$ $\mm$-a.e.~on $X$ and $\Lip^w(\bar f)\le1$.
\end{proof}

Finally we can prove the transformation formula for the  $\rcdkn$ condition under time change.

\begin{theorem}\label{th:measurevalue}
Let  $\ms$ be a $\rcdkn$ space and let $w\in  {\rm D}_\loc({\bf \Delta})\cap  L^\infty_\loc(X)$ be continuous  $\mm$-a.e.~with ${\bf \Delta} w={\bf \Delta}_{sing} w+{ \Delta}_{ac} w \, \mm$ and ${\bf \Delta}_{sing} w  \leq 0$.  
Then the time-changed  metric measure space $(X, \d^{w}, \mm^w)$ satisfies the
 ${\rm RCD}(K', N')$ condition
 for any  $N'\in (N, +\infty]$ and $K'\in\R$ such that $\mm$-a.e.~on $X$
\begin{equation}\label{K'}
K'\le
 e^{-2w}\Big [K- \frac{(N-2)(N'-2)}{N'-N} |\D w|^2- { \Delta}_{ac} w \Big].
\end{equation}
\end{theorem}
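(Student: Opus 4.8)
The plan is to deduce the statement from the Eulerian result, Theorem \ref{th:bekn}, via the equivalence between the Bakry-\'Emery and Lott-Sturm-Villani curvature-dimension conditions established in \cite{EKS-O}. The strategy has three ingredients: first, identify the Dirichlet form of the time-changed metric measure space $(X,\d^w,\mm^w)$ with the time-changed Dirichlet form $\E^w$ of Section \ref{sec:be}; second, verify that the RCD$(K,N)$ hypothesis supplies all the structural assumptions (Assumption \ref{assumption1}) needed to invoke Theorem \ref{th:bekn}; third, check that the metric side of the RCD condition — namely that $\d^w$ is a length (in fact geodesic) metric compatible with the intrinsic distance of $\E^w$, together with the Sobolev-to-Lipschitz property — holds, so that BE$(K',N')$ for $\E^w$ upgrades to RCD$(K',N')$ for $(X,\d^w,\mm^w)$.

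\medskip
\textbf{Step 1 (Identification of the energy).}
By Lemma \ref{Gamma^w}, since $w$ is continuous $\mm$-a.e., the minimal weak upper gradient of the metric measure space $(X,\d^w,\mm^w)$ satisfies $|\D^w f|=e^{-w}|\D f|$, hence the Cheeger energy of $(X,\d^w,\mm^w)$ is $\int \Gamma^w(f)\,\d\mm^w=\int e^{-2w}\Gamma(f)\,e^{2w}\,\d\mm=\int \Gamma(f)\,\d\mm$. This is precisely $\E^w$ as defined in Section \ref{sec:be}. In particular the Cheeger energy of $(X,\d^w,\mm^w)$ is quadratic (it agrees with the quadratic form $\E$ on its domain $\V^w$), so $(X,\d^w,\mm^w)$ is infinitesimally Hilbertian, and its carr\'e du champ, generator and test-function class coincide with $\Gamma^w=e^{-2w}\Gamma$, $\Delta^w=e^{-2w}\Delta$, ${\rm TestF}_\loc(\E^w)={\rm TestF}_\loc(\E)$ of Lemma \ref{lemma:lap}, using $w\in\V_\loc^\infty$.

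\medskip
\textbf{Step 2 (Apply the Eulerian theorem).}
The RCD$(K,N)$ assumption on $\ms$ guarantees (cf. \cite{AGS-B}, \cite{EKS-O}) that $\E$ is a strongly local, quasi-regular, symmetric Dirichlet form admitting a carr\'e du champ and satisfying BE$(K,N)$, and that suitable cut-off functions as in Assumption \ref{assumption1}(c) exist; by Lemma \ref{weak-strong be} this is the same as BE$_\loc(K,N)$. If $N=\infty$ we may harmlessly replace $N$ by any finite $N_0\in[2,\infty)$ with $N_0<N'$ (RCD$(K,N)$ with $N=\infty$ is implied by, but here we only need, the weaker BE$_\loc(K,N_0)$ for the chosen $N_0$ — more carefully, one works directly with $N\in[2,\infty)$ and handles $N=\infty$ or $N<2$ by the standard monotonicity BE$_\loc(K,N)\Rightarrow$ BE$_\loc(K,N_0)$ for $N_0\ge\max(N,2)$, adjusting the coefficient $\frac{(N-2)(N'-2)}{N'-N}$ accordingly, which only weakens the required bound on $K'$). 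With $w\in{\rm D}_\loc({\bf\Delta})\cap L^\infty_\loc$, ${\bf\Delta}_{sing}w\le0$, and condition \eqref{K'} — which is exactly \eqref{eq-th:bekn} with $k=K$ and $\Gamma(w)=|\D w|^2$ — Theorem \ref{th:bekn} yields that $\E^w$ satisfies BE$(K',N')$ on $L^2(X,\mm^w)$.

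\medskip
\textbf{Step 3 (From BE to RCD).}
To conclude RCD$(K',N')$ for $(X,\d^w,\mm^w)$ it remains, by the equivalence theorem of \cite{EKS-O}, to check the compatibility conditions between the metric $\d^w$ and the Dirichlet form $\E^w$: essentially that $\d^w$ coincides with the intrinsic (Carnot-Carath\'eodory) distance of $\E^w$, that $(X,\d^w)$ is a complete separable length space, and that the Sobolev-to-Lipschitz property holds. Completeness and separability of $(X,\d^w)$ follow from the local equivalence of $\d^w$ and $\d$ (Lemma \ref{Gamma^w} gives two-sided bounds on bounded sets) together with Assumption \ref{assumption}(i). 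That $\d^w$ is geodesic is Lemma \ref{equ-dist}. The Sobolev-to-Lipschitz property is Lemma \ref{SobLip}. Finally, the very definition of $\d^w$ as $\sup\{\phi(x)-\phi(y):\phi\in\V_\loc\cap C(X),\ |\D\phi|\le e^w\}$ together with Lemma \ref{Gamma^w} ($|\D^w\phi|\le1\iff|\D\phi|\le e^w$) identifies $\d^w$ with the intrinsic distance of $\E^w$. Thus the hypotheses of the Eulerian-to-Lagrangian equivalence are met, and BE$(K',N')$ for $\E^w$ together with these metric compatibilities gives RCD$(K',N')$ for $(X,\d^w,\mm^w)$.

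\medskip
\textbf{Main obstacle.}
The one-line deductions in Steps 1 and 3 hide the only real subtlety: making sure that the metric $\d^w$ built abstractly from the Dirichlet-form data is exactly the one whose geodesic and length structure the LSV condition requires, despite $w$ being only locally bounded and a.e. continuous. This is the content of Lemmas \ref{Gamma^w}, \ref{equ-dist}, \ref{SobLip}, and those are where the low-regularity hypotheses on $w$ are genuinely used; the curvature bookkeeping itself is entirely taken care of by Theorem \ref{th:bekn}. A secondary technical point is the bridging of the case $N=\infty$ (or $N\in[1,2)$) in the original space to the finite-$N$ hypothesis $N\in[2,\infty)$ required by Theorem \ref{th:bekn}, handled by the monotonicity of the Bakry-\'Emery condition in $N$ at the cost of only strengthening — never weakening — the admissible range of $K'$ relative to \eqref{K'}.
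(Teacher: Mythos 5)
Your overall architecture (identify $\E^w$ via Lemma \ref{Gamma^w}, apply Theorem \ref{th:bekn} for the Eulerian bound, upgrade to RCD via \cite{AGS-B,EKS-O} using geodesicity of $\d^w$ and the Sobolev-to-Lipschitz property) is the same as the paper's, and your attention to the metric compatibilities in Step~3 is well placed. However, there is one missing ingredient that the paper spends roughly half of its proof on and that your Step~3 omits entirely: the squared exponential \emph{volume-growth condition} \eqref{vol-growth} for the new space $(X,\d^w,\mm^w)$. The equivalence theorem of \cite{AGS-B,EKS-O} going from BE$(K',N')$ back to RCD$(K',N')$ is not unconditional — it requires the reference measure to satisfy $\mm^w(B^w_r(z))\le C e^{Cr^2}$, which is needed so that the heat flow is mass-preserving, the relative entropy is well-defined and lower semicontinuous on $\mathcal P_2$, and so on. Listing ``complete separable length space'' and ``Sobolev-to-Lipschitz'' is not enough.

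This is a genuine obstacle because $w$ is only in $L^\infty_\loc$, not $L^\infty$: the conformal factor $e^{2w}$ may blow up at infinity, and the new metric $\d^w$ may shrink distances, so there is no a priori reason for $\mm^w$-volumes of $\d^w$-balls to be tame. The paper handles this in two steps: (i) when $w\in L^\infty(X)$ globally, the volume-growth of $(X,\d^w,\mm^w)$ is inherited directly from that of $(X,\d,\mm)$ (which in turn follows from RCD$(K,N)$); and (ii) for general $w\in L^\infty_\loc$, one truncates $w$ to $w_\ell=w\cdot\chi_\ell$ on expanding balls $B_\ell(z)$, chooses cut-offs so that the hypothesis \eqref{K'} degrades only to $K'-1$, obtains from step (i) an $\ell$-\emph{uniform} constant $C$ in the growth bound for each $(X,\d^{w_\ell},\mm^{w_\ell})$, and observes that $\mm^{w_\ell}(B_r^{w_\ell}(z))=\mm^{w}(B_r^{w}(z))$ for $r\le\ell$. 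You should add this exhaustion argument to Step~3; without it, the passage from BE$(K',N')$ to RCD$(K',N')$ has no justification.

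A secondary remark: your aside about substituting $N$ by $N_0\ge\max(N,2)$ when $N<2$ actually \emph{tightens} the constraint on $K'$ rather than leaving \eqref{K'} intact (the coefficient $\frac{(N-2)(N'-2)}{N'-N}$ is increasing in $N$), so the statement as written with the exact coefficient would not be recovered by that route; it is safer to restrict attention to $N\in[2,\infty)$, which is the regime the paper tacitly works in.
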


A particular consequence of the Theorem is that the time-changed metric measure space $(X,\d^w,\mm^w)$ 
satisfies the squared exponential volume growth condition:\\ $\exists C\in\R, z\in X$:
\begin{equation}\label{vol-growth}
\mm^w(B^w_r(z))\le C \, e^{Cr^2}\qquad (\forall r>0).
\end{equation}

\begin{proof} 
From the work of \cite{AGS-B, EKS-O}, we know that the curvature dimension condition RCD$(K,N)$  implies the Bakry-\'Emery condition BE$(K,N)$ for the Dirichlet form $\E$ on $L^2(X,\mm)$ induced by the measure space $(X,\d,\mm)$. According to Theorem \ref{th:bekn}, this implies the Bakry-\'Emery condition BE$(K',N')$ for the Dirichlet form $\E^w$ on $L^2(X,\mm^w)$. Due to Lemma \ref{Gamma^w}, the latter indeed is the Dirichlet form induced by the metric measure space $(X,\d^w,\mm^w)$.
Finally, again by 
 \cite{AGS-B, EKS-O},  BE$(K',N')$ for the Dirichlet form induced by $(X,\d^w,\mm^w)$ will imply RCD$^*(K',N')$
 provided  the volume-growth condition \eqref{vol-growth} is satisfied and the Sobolev-to-Lipschitz property holds. The latter was proven in Lemma \ref{SobLip}.
 To deal with the former, we proceed in two steps.
 
 i) ) Let us  first consider the case  $w\in L^\infty(X)$. Then the volume-growth condition \eqref{vol-growth} for $(X,\d^w,\mm^w)$ obviously from that for $(X,\d,\mm)$ which in turn follows from the $\rcdkn$ assumption.
 
 ii) Now let general $w\in L_\loc^\infty(X)$ be given as well as $K'$ and $N'$ such that 
 \eqref{K'} is satisfied. Given $z\in X$, define $w_l=w\cdot \chi_\ell$ with suitable cut-off functions $(\chi_\ell)_{\ell\in\N}$ (cf.   \cite{AMS-O}, Lemma 6.7) such that for all $\ell\in\N$
 \begin{itemize}
 \item $w_\ell=w$ on $B_\ell(z)$
 \item $w_\ell$ is bounded on $X$
 \item
$ e^{-2w_\ell}\Big [K- \frac{(N-2)(N'-2)}{N'-N} |\D w_\ell|^2- { \Delta}_{ac} w_\ell \Big]\ge K'-1$.
 \end{itemize}
 Then according to part i) of this proof, the metric measure space  $(X,\d^{w_\ell},\mm^{w_\ell})$
satisfies ${\rm RCD}(K'-1,N')$. This in particular implies that there exists a constant $C$ (which indeed can be chosen independent of $\ell$) such that 
$$\mm^{w_\ell}\big(B_r^{w_\ell}(z)\big)\leq C\, e^{C\, r^2}$$
for all $r>0$. Since $\mm^{w_\ell}\big(B_r^{w_\ell}(z)\big)=\mm^{w}\big(B_r^{w}(z)\big)$ for all $r\leq\ell$, this finally proves the requested volume growth condition.
\end{proof}

It might be of certain interest to analyze the validity of the volume growth condition \eqref{vol-growth} under time change without referring to curvature bounds.
\begin{lemma}\label{prop:volumegrowth}
Suppose  there exist non-negative $p,q\in L^\infty_\loc(\R_+)$ with
$$-q(\d(\cdot, x_0)) \leq w(.)\leq p(\d(\cdot, x_0))\qquad\text{on }X.$$
Then $(X, \d^w, \mm^w)$ satisfies the squared exponential volume growth condition:\\ $\exists C\in\R, x_0\in X$:
\begin{equation}\label{vol-growth}
\mm^w(B^w_r(x_0))\le C \, e^{Cr^2}\qquad (\forall r>0)
\end{equation}
provided the function  $f(r):=\int_0^r e^{-q(s)}ds$ satisfies
\begin{itemize}
\item[(i)] $\liminf_{r\to\infty} \frac1r f(r)>0$ and
\item[(ii)] $\limsup_{r\to\infty} \frac1{r^2}p\big(f^{-1}(r)\big)<\infty$.
\end{itemize}
 In particular, if  $q$ is bounded  and $\lmts{r}{\infty}\frac{p(r)}{r^2}<\infty$,  then $(X, \d^w, \mm^w)$ satisfies the squared exponential volume growth condition.
%
\end{lemma}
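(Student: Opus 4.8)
The plan is to estimate the volume $\mm^w(B^w_r(x_0))$ by comparing $\d^w$-balls with $\d$-balls (which we control via the $\rcdkn$ assumption on $(X,\d,\mm)$, hence Bishop--Gromov type growth, or at least exponential volume growth of the original space) and by controlling the conformal weight $e^{2w}$ on each annulus. The key observation is that the hypothesis $-q(\d(\cdot,x_0))\le w\le p(\d(\cdot,x_0))$ converts $\d^w$-distance into $\d$-distance in a quantitative way: for the lower bound on $w$, any $\d^w$-rectifiable curve from $x_0$ of $\d^w$-length $\le r$ stays within $\d$-distance $f^{-1}(r)$ of $x_0$, where $f(r)=\int_0^r e^{-q(s)}\,\d s$. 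Indeed, along such a curve the $\d^w$-length element is at least $e^{-q(\d(\gamma_s,x_0))}|\dot\gamma_s|\,\d s$, so if the curve reached $\d$-distance $\rho$ from $x_0$ it would have $\d^w$-length at least $\int_0^\rho e^{-q(s)}\,\d s = f(\rho)$; hence $f(\rho)\le r$, i.e. $\rho\le f^{-1}(r)$. This gives the crucial inclusion
\begin{equation}\label{ball-incl}
B^w_r(x_0)\subseteq B_{f^{-1}(r)}(x_0).
\end{equation}

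Next I would combine \eqref{ball-incl} with the weight bound. On $B_{f^{-1}(r)}(x_0)$ we have $w\le p(f^{-1}(r))$, so
\[
\mm^w\big(B^w_r(x_0)\big)=\int_{B^w_r(x_0)}e^{2w}\,\d\mm\le e^{2p(f^{-1}(r))}\,\mm\big(B_{f^{-1}(r)}(x_0)\big).
\]
The $\rcdkn$ assumption (via the standing Assumption \ref{assumption}) yields exponential, in fact squared-exponential, volume growth for $\mm$: $\mm(B_\rho(x_0))\le C_0 e^{C_0\rho^2}$ for all $\rho>0$ and some $C_0$. Applying this with $\rho=f^{-1}(r)$ gives
\[
\mm^w\big(B^w_r(x_0)\big)\le C_0\,\exp\!\Big(2p\big(f^{-1}(r)\big)+C_0\,\big(f^{-1}(r)\big)^2\Big).
\]
Now hypothesis (i), $\liminf_{r\to\infty}\frac1r f(r)>0$, guarantees that $f^{-1}(r)\le c\,r$ for large $r$ (and some $c<\infty$), so $(f^{-1}(r))^2\le c^2 r^2$; and hypothesis (ii), $\limsup_{r\to\infty}\frac1{r^2}p(f^{-1}(r))<\infty$, gives $p(f^{-1}(r))\le c' r^2$ for large $r$. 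For small $r$ everything is bounded since $p,q\in L^\infty_\loc$. Combining, $\mm^w(B^w_r(x_0))\le C e^{C r^2}$ for all $r>0$ with a suitable $C$, which is \eqref{vol-growth}.

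For the final ``in particular'' clause, if $q$ is bounded, say $q\le M$, then $f(r)\ge e^{-M}r$, so $\liminf\frac1r f(r)\ge e^{-M}>0$, verifying (i), and moreover $f^{-1}(r)\le e^{M}r$, so $\frac1{r^2}p(f^{-1}(r))\le e^{2M}\cdot\frac{p(e^M r)}{(e^M r)^2}$, which stays bounded as $r\to\infty$ exactly when $\limsup_{\rho\to\infty}\frac{p(\rho)}{\rho^2}<\infty$; this verifies (ii). Hence the general case applies.

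\textbf{Main obstacle.} The one point requiring care is the inclusion \eqref{ball-incl}: strictly speaking $\d^w$ is defined via a sup over functions, but since $w$ is continuous $\mm$-a.e. we may invoke Lemma \ref{equ-dist}, which identifies $\d^w$ with $e^{\bar w}\odot\d$, the length metric built from genuine curves; the length-functional argument above then applies verbatim with $\bar w$ in place of $w$ (and $\bar w\le p(\d(\cdot,x_0))$ still holds, while the lower bound $-q(\d(\cdot,x_0))\le\bar w$ needs $q$ to be, say, continuous or at least upper semicontinuous — in the stated generality one uses that $f$ only needs the lower bound along curves, where one can use $\limsup$). If one prefers to avoid the geodesic representation, one can instead work directly with the defining sup: take $\phi=f\big((e^{\bar w}\odot\d)(x_0,\cdot)\big)$ for a suitable increasing $f$ and check $|\D^w\phi|\le1$, recovering the same inclusion. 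Either way this is the only non-bookkeeping step; the rest is the elementary chain of inequalities above.
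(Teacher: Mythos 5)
Your proof is correct and follows essentially the same route as the paper: the key ball inclusion $B^w_r(x_0)\subset B_{f^{-1}(r)}(x_0)$ via the lower bound $\d^w(x_0,y)\ge f(\d(x_0,y))$ (which is exactly what the paper reads off from Lemma \ref{equ-dist}), then the weight bound $e^{2w}\le e^{2p(f^{-1}(r))}$ on that ball, then the squared-exponential growth of $(X,\d,\mm)$ coming from ${\rm RCD}(K,\infty)$, and finally hypotheses (i), (ii) to absorb $f^{-1}(r)$ and $p(f^{-1}(r))$ into $Cr^2$. One small remark on your ``main obstacle'': the worry that the lower bound might fail for $\bar w$ is unfounded, since $\bar w\ge w\ge -q(\d(\cdot,x_0))$ automatically holds pointwise (the upper semicontinuous envelope only increases $w$, and only the \emph{upper} bound $w\le p(\d(\cdot,x_0))$ is used directly rather than via $\bar w$); and the curve-length lower bound works for merely measurable $q$ by a one-dimensional coarea/monotone-rearrangement argument, so no extra regularity on $q$ is needed.
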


\begin{proof}  From Lemma \ref{equ-dist},  we know
\begin{equation*}
\d^w(x_0, y) \geq \int_0^ {\d(x_0, y)} \exp \big( -q(r) \big )\,\d r=f(\d(x_0, y))
\end{equation*}
for any $y\in X$.  Since $f^{-1}$ is strictly increasing, this implies
\begin{equation*}
 B_{{R}}^{{w}}(x_0) \subset B_{f^{-1}{(R)}}(x_0),~~~~\forall ~R>0.
\end{equation*}
Hence,
\[
\mm^w \big (B_{R}^{w}(x_0)\big) \leq \exp \big(2p( f^{-1}(R))\big)\mm \big (B_{{f^{-1}(R)}}(x_0)\big).
\]
Recall that the $\rcd$ condition implies the squared exponential  volume growth condition,  so there exists $M, c >0$ such that 
\begin{equation*}
\mm^w \big (B_{R}^{{w}}(x_0)\big) \leq  M\exp \Big (2p( f^{-1}(R))+c \big( f^{-1}(R) \big )^2\Big).
\end{equation*}
Note that (i) implies $\limsup_{r\to\infty} \frac1r f^{-1}(r)<\infty$. Hence, together with (ii), this implies  the squared exponential volume growth condition for $(X, \d^w, \mm^w)$.
\end{proof}

\subsection{Convexity transform}
Firstly we introduce the notion of local $\ell$-convexity   in non-smooth setting. Such notion  is derived    from  \cite{LierlSturm2018}  by the second author and Lierl (see  Definition 2.6 and Definition 2.9  therein).  

\begin{definition}[$\ell$-convex functions, Definition 2.6 \cite{LierlSturm2018}]
Given $\ell\in\R$, we say that a function $V$ is $\ell$-convex on a closed subset $Z \subset  X $ if there exists a convex open covering $ \cup_i X_i \supset Z$   such that each $V\restr{X_i}: \overline {X_i} \mapsto  (-\infty, +\infty]$ is $\ell$-geodesically convex, in the sense that for each $x_0, x_1 \in \overline{X_i}$, there exists a geodesic $\gamma:[0, 1] \mapsto X$ from $x_0$ to $x_1$ such that 
\[
V(\gamma(t)) \leq (1-t)V(\gamma(0))+tV(\gamma(1))-\frac{\ell}2t(1-t)|\dot{\gamma}_t|^2,~~\forall t\in [0, 1].
\]
\end{definition}

\begin{definition}[Locally $\ell$-convex sets, Definition 2.9  \cite{LierlSturm2018}]\label{def:kconvex}
Let $\Omega \subset X$ be an open subset and let $V:= \d(., \Omega)-\d(., {X\setminus \Omega})$ denote the signed distance from the boundary, in the sequel also briefly denoted by $\pm \d(., \partial \Omega)$. 

We say that $\Omega$ is locally $\ell$-convex if  for each $\delta > 0$ there exists $r > 0$ such that   $ V$  is $(\ell-\delta)$-convex on $\Omega^r_{-r}$  with $|\D V|\geq 1-\delta$ where $\Omega^r_{-r}:= \{-r < V < r\}$.  
\end{definition}

\begin{remark}
Assume that $X$ is a smooth Riemannian manifold,  and  $\Omega$ is  a bounded open subset of $X$ with smooth boundary. It  is  proved  in Proposition 2.10  \cite{LierlSturm2018}, that the real-valued second fundamental form  on $\partial \Omega$ is bounded from below by $\ell$ if and only if  $\Omega$ is locally $\ell$-convex. 
\end{remark}

Then we can  convexify  locally $\ell$-convex sets using time change and  the following convexification technique developed in  \cite{LierlSturm2018} (see Theorem 2.17 therein).

\begin{lemma}[Convexification Theorem]
Let $\Omega$ be a  locally $\ell$-convex subset in $X$ for some $\ell\leq 0$. Then $\Omega$  is locally geodesically convex in $(X, \d^{-\ell' V}) $ for any $\ell'<\ell$.
\end{lemma}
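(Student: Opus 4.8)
The plan is to reduce the Convexification Theorem to the local-to-global analysis carried out in \cite{LierlSturm2018}, using the metric identification $\d^{-\ell'V}=e^{-\ell'\bar V}\odot\d$ from Lemma \ref{equ-dist} to translate the time-changed geometry back into the original space. First I would fix $\ell'<\ell$ and, by the definition of local $\ell$-convexity (Definition \ref{def:kconvex}), extract for each small $\delta>0$ a radius $r>0$ so that the signed distance $V=\pm\d(\cdot,\partial\Omega)$ is $(\ell-\delta)$-convex on the shell $\Omega^r_{-r}$ with $|\D V|\ge 1-\delta$ there. The point of the time change by the potential $-\ell'V$ is that near $\partial\Omega$ the conformal factor $e^{-\ell'V}$ has a gradient that exactly compensates the geodesic concavity of $\partial\Omega$: moving a geodesic a little off the boundary into $\{V>0\}$ decreases $e^{-\ell'V}$ (since $\ell'<0$ and $V>0$ there gives... one must track signs carefully — here $\ell\le 0$, so $\ell'<\ell\le 0$, and the weight $e^{-\ell'V}$ is increasing in $V$), so that $\d^{-\ell'V}$-geodesics between points of $\overline\Omega$ are pushed into $\overline\Omega$.

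The key steps, in order, would be: (1) cover a neighbourhood of $\partial\Omega$ by finitely many convex (in $(X,\d)$) charts $X_i$ on which $V|_{X_i}$ is $(\ell-\delta)$-geodesically convex, and combine these with the trivially convex interior to get a convex open covering of $\overline\Omega$ adapted to $(X,\d^{-\ell'V})$; (2) on each chart, given $x_0,x_1\in\overline\Omega$, take a $\d$-geodesic $\gamma$ realizing the $(\ell-\delta)$-convexity inequality for $V$, and estimate the $\d^{-\ell'V}$-length of the competitor curve obtained by deforming $\gamma$ slightly toward the interior — this is precisely the computation in the proof of Theorem 2.17 of \cite{LierlSturm2018}, where one shows that the first variation of the weighted length in the inward normal direction is strictly negative whenever the curve touches $\partial\Omega$, forcing any $\d^{-\ell'V}$-minimizing geodesic to stay in $\overline\Omega$; (3) conclude that $\overline\Omega$ (equivalently $\Omega$, since $\mm(\partial\Omega)=0$ and the metrics are locally equivalent by Lemma \ref{Gamma^w}) is locally geodesically convex in $(X,\d^{-\ell'V})$, i.e. every pair of nearby points of $\Omega$ is joined by a $\d^{-\ell'V}$-geodesic lying in $\Omega$. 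One should also note that $\d^{-\ell'V}$ is genuinely a geodesic metric, which is guaranteed by Lemma \ref{equ-dist} once we know $V$ (and hence $\bar V$, since the distance function is already continuous) is continuous and locally bounded, so that $w=-\ell'V$ satisfies the standing hypotheses.

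The main obstacle I anticipate is the quantitative first-variation estimate in step (2): one must make the gain coming from the $|\D V|\ge 1-\delta$ lower bound and the $(\ell-\delta)$-convexity of $V$ strictly dominate the loss coming from the conformal distortion of the competitor curve and from the fact that the $\d$-geodesic $\gamma$ need not lie in a single chart. This is exactly where the inequality $\ell'<\ell$ (rather than $\ell'\le\ell$) is used — it provides the strict slack absorbing the $\delta$-errors, and one sends $\delta\to 0$ at the end. Since this computation is carried out in full generality in \cite{LierlSturm2018} (Theorem 2.17) and the present setting only replaces the smooth/Dirichlet-form framework there by the RCD framework — in which, by Lemma \ref{Gamma^w}, the time-changed carré du champ and hence the length structure transform in the same way — I expect the proof to consist essentially of verifying that the hypotheses of that theorem are met and then invoking it, rather than redoing the variational argument; accordingly I would keep this proof short and cite \cite{LierlSturm2018} for the core estimate.
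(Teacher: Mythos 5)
The paper gives this lemma no proof of its own --- it is quoted as Theorem 2.17 of \cite{LierlSturm2018}, as the surrounding text makes explicit --- and your proposal likewise reduces to invoking that theorem once the hypotheses are checked, so the two approaches coincide. One minor caution: since $V=\d(\cdot,\Omega)-\d(\cdot,X\setminus\Omega)$ is negative \emph{inside} $\Omega$, the inward deformation in your first-variation heuristic should move into $\{V<0\}$ (where $e^{-\ell'V}$ is smaller, because $-\ell'>0$), not into $\{V>0\}$ as your sentence initially asserts before you self-correct.
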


Next we recall some important results concerning $L^1$-optimal transport and measure decomposition. This theory  has proven to be a powerful tool in studying the fine structure of  metric measure spaces. We refer the readers  to the lecture note \cite{CavallettiL1} for an overview of this topic  and the  bibliography. 

\begin{lemma}[Localization for $\rcdkn$ spaces, Theorem 3.8 and Theorem 5.1 \cite{CM-ISO})]\label{prop-l1}
Let $\ms$ be an essentially non-branching metric measure space with $\supp \mm=X$,  and satisfying $\rcdkn$ condition for some $K\in \R$ and $N \in (1, \infty)$. Then
for any 1-Lipschitz function $u$ on $X$  and the  transport set $\mathsf T_u$ associated with $u$ (up to $\mm$-measure zero set, $\mathsf T_u$ coincides with
 $\{|\nabla u|=1\}$), there is a disjoint family of unparameterized geodesics  $\{X_q\}_{q \in \mathfrak Q}$ such that 
\begin{equation}\label{eq1:l1}
\mm(\mathsf T_u  \setminus \cup X_q)=0,
\end{equation}
and 
\begin{equation}\label{eq2:l1}
\mm\restr{\mathsf T_u}=\int_{\mathfrak{Q}} \mm_q\, \d \mathfrak{q}(q),~~\mathfrak{q}(\mathfrak{Q})=1~~\text{and}~~\mm_q(X_q)=1~~\mathfrak{q}-\text{a.e.}~ q \in \mathfrak Q.
\end{equation}
Furthermore, for $\mathfrak{q}$-a.e. $q\in \mathfrak{Q}$, $\mm_q$ is a Radon measure with  $\mm_q  \ll  \mathcal H^1 \restr{X_q}$  and
$(X_q, \d, \mm_q)$ satisfies $\rcdkn$. In particular, $\mm_q=h_q \mathcal H^1 \restr{X_q}$ for some   $\cdkn$ probability  density $h_q$.
\end{lemma}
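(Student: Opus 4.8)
\textbf{Proof strategy for Lemma \ref{prop-l1}.}

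The plan is to invoke the localization machinery for essentially non-branching $\cdkn$ spaces, originally due to Cavalletti--Mondino, and then use the stability of the $\rcdkn$ property under restriction to one-dimensional transport rays. First I would recall that an $\rcdkn$ space is automatically essentially non-branching (this follows from the existence and uniqueness of optimal geodesic plans, a consequence of the Riemannian structure), so the hypotheses of the underlying $L^1$-localization theorem are met. Given the $1$-Lipschitz function $u$, one forms the associated transport set $\mathsf T_u$; the identification of $\mathsf T_u$ with $\{|\nabla u|=1\}$ up to $\mm$-null sets is a standard fact about $1$-Lipschitz functions on infinitesimally Hilbertian spaces and I would cite it rather than reprove it. The disintegration \eqref{eq2:l1} along the equivalence classes $\{X_q\}_{q\in\mathfrak Q}$, with the ray map measurable and each $X_q$ an unparameterized geodesic, is then exactly the content of Theorem 3.8 in \cite{CM-ISO}, which also yields \eqref{eq1:l1} (the transport set is covered up to measure zero by its rays) and the normalization $\mm_q(X_q)=1$, $\mathfrak q(\mathfrak Q)=1$.

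The second half — that $\mathfrak q$-a.e.\ conditional measure $\mm_q$ is a Radon measure absolutely continuous with respect to $\mathcal H^1\restr{X_q}$, and that $(X_q,\d,\mm_q)$ again satisfies $\rcdkn$, so that the density $h_q$ is a $\cdkn$ density in the one-dimensional sense — is Theorem 5.1 of \cite{CM-ISO}. Here the key mechanism is that the $\cdkn$ condition, tested against optimal transports supported on a single ray, descends to the conditional measures; once one knows $(X_q,\d,\mm_q)$ is a one-dimensional $\cdkn$ space, the structure theory of such spaces forces $\mm_q = h_q\,\mathcal H^1\restr{X_q}$ with $h_q$ satisfying the appropriate concavity/log-concavity inequality. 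I would simply assemble these citations; no new computation is needed.

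The main obstacle — really the only substantive point — is ensuring the hypotheses transfer correctly: one must check that the ambient space in the statement genuinely satisfies the essential non-branching assumption and that $\rcdkn$ (as opposed to the a priori weaker $\cdkn$) is what propagates to the rays. This is not difficult, but it is the place where one must be careful to cite the correct versions of the results (the $\rcdkn$ case, where the reduced and full curvature-dimension conditions coincide by \cite{Cavalletti-Milman16}, so that $\cdkn = \cd^*kn$ along rays is unambiguous). Everything else is a direct quotation of \cite{CM-ISO}, Theorems 3.8 and 5.1.
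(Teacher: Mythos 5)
Your proposal is correct and matches the paper's treatment exactly: the paper gives no proof for this lemma at all, presenting it as a direct quotation of Theorems 3.8 and 5.1 of Cavalletti--Mondino with the citation embedded in the lemma's title. The points you flag as requiring care — that $\rcdkn$ implies essential non-branching, and that on the one-dimensional rays $\cdkn$ and $\rcdkn$ coincide since an interval is automatically infinitesimally Hilbertian — are indeed the right things to verify when assembling the citations, and both hold.
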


\begin{lemma}\label{prop:ir}
Let  $Y$ be a domain in $X$  with $\mm(\partial Y)=0$, and $V:= \d(., Y)-\d(., {X\setminus Y})$ be the signed distance from the boundary. Then the  transport set $\mathsf T_V$ associated with $V$ has full measure in $X$.    There is a disjoint family of unparameterized geodesics  $\{X_q\}_{q \in \mathfrak Q}$ satisfying \eqref{eq1:l1} and \eqref{eq2:l1} in Theorem \ref{prop-l1}, and a constant $r_0>0$ such that 
\[
V(a_q) \geq   0,~~~V(b_q)\leq -r_0~~~ \mathfrak{q}-\text{a.e.}~q\in \mathfrak{Q}
\]  where   $a_q=a_q(X_q), b_q=b_q(X_q)$  are  the end points of $X_q$.
\end{lemma}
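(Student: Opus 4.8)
The goal is to show that for the signed distance function $V=\d(\cdot,Y)-\d(\cdot,X\setminus Y)$, the transport set $\mathsf T_V$ has full $\mm$-measure, and that the geodesics $X_q$ furnished by Lemma \ref{prop-l1} can be chosen so that their endpoints $a_q,b_q$ satisfy $V(a_q)\ge 0$ and $V(b_q)\le -r_0$ for a uniform $r_0>0$. The first assertion is the easier one: $V$ is $1$-Lipschitz, and since $\mm(\partial Y)=0$ we have $|\D V|=1$ $\mm$-a.e.\ on $X$ (the eikonal equation $|\D \d(\cdot,A)|=1$ holds $\mm$-a.e.\ on the complement of $A$ in any RCD, hence length, space, and applying this on $Y$ and on $X\setminus Y$ separately covers $\mm$-a.e.\ point). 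Because the transport set $\mathsf T_V$ coincides $\mm$-a.e.\ with $\{|\nabla V|=1\}$ by Lemma \ref{prop-l1}, this gives $\mm(X\setminus \mathsf T_V)=0$. Then Lemma \ref{prop-l1} directly yields the disjoint family $\{X_q\}_{q\in\mathfrak Q}$ with \eqref{eq1:l1} and \eqref{eq2:l1}.

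\emph{The endpoint estimates.} This is where the real work is, and where I expect the main obstacle. First I would recall that in $L^1$-optimal transport the rays $X_q$ are maximal transport geodesics for the Kantorovich potential $V$, so along each $X_q$ the function $V$ is an affine function of arclength with slope exactly $-1$ (parametrizing from $a_q$ to $b_q$): $V(\gamma_q(t)) = V(a_q) - t$ where $t$ is arclength. Hence $V(a_q)-V(b_q) = \mathrm{length}(X_q) = \mathcal H^1(X_q)$. I would like to argue: (1) the ``upper'' endpoint satisfies $V(a_q)\ge 0$, and (2) there is a uniform lower bound on the length, giving $V(b_q)\le V(a_q)-r_0$ and then — after possibly further restricting — $V(b_q)\le -r_0$. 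For (1), the point is that a transport ray for $V$ cannot be extended past a point where $V>0$ without leaving the transport set: if $V(a_q)>0$ strictly then near $a_q$ one is inside $Y$, where $V=\d(\cdot,X\setminus Y)$ and this distance function's rays all point *away* from the boundary; by maximality $a_q$ must be a point where the ray can no longer be prolonged, and one checks this forces $V(a_q)\ge 0$ (the ray terminates at the boundary or on the far side). I would make this precise using the structure of the transport set (branch points and the set of endpoints have measure zero) together with the fact that $t\mapsto V(\gamma_q(t))$ decreasing at unit speed, combined with $-\sup V \le V \le \sup V$ being finite on the relevant region.

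\emph{The uniform $r_0$.} For the uniform positive lower bound on $\mathcal H^1(X_q)$ — equivalently $V(a_q)-V(b_q)\ge r_0$ — I would use that $Y$ is a genuine domain (so $\partial Y$ is ``thick'' enough) together with the non-branching/RCD geometry: a transport ray starting at a point $a_q$ with $V(a_q)\ge 0$ i.e.\ on $\overline Y$ and running in the direction of decreasing $V$ must cross from $\{V\ge 0\}$ deep into $\{V<0\}=X\setminus \overline Y$; the amount by which it penetrates is bounded below because, by the RCD lower Ricci bound and the density bounds $h_q$ of Lemma \ref{prop-l1}, rays that are too short carry too little $\mm_q$-mass relative to the total, contradicting $\mm(\mathsf T_V\setminus \cup X_q)=0$ on a set of positive measure; alternatively, and more robustly, one restricts attention to the subset $\mathfrak Q' := \{q : \mathcal H^1(X_q)\ge r_0\}$ and shows $\mm(\cup_{q\notin\mathfrak Q'}X_q)=0$ for small enough $r_0$ because this union is contained in an $r_0$-neighborhood of $\partial Y$ and has measure tending to $0$ as $r_0\to 0$ (using $\mm(\partial Y)=0$); then relabel $\mathfrak Q'$ as $\mathfrak Q$. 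This relabeling is the clean way to get the clean statement ``$V(b_q)\le -r_0$'' after also noting that on a ray of length $\ge r_0$ starting with $V(a_q)\ge 0$, if $V(a_q)$ itself is large we cut the ray (shrinking $X_q$ to a sub-geodesic still of positive length ending at $\{V=-r_0\}$) — but one must check the family remains a valid disjoint partition up to null sets, which follows since sub-geodesics of a disjoint family are disjoint and we only discard a null set. The main obstacle, then, is bookkeeping: ensuring that each trimming/discarding operation preserves \eqref{eq1:l1}--\eqref{eq2:l1} and the $\rcdkn$ property of $(X_q,\d,\mm_q)$ (which is stable under restriction to sub-intervals), so that the final family genuinely satisfies all conclusions simultaneously.
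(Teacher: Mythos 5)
Your treatment of the first claim — that $\mathsf T_V$ has full $\mm$-measure — is fine and essentially the same observation as the paper's (they phrase it via local compactness: every $x\notin\partial Y$ has a foot point on $\partial Y$, hence lies on a transport geodesic). Your sketch of $V(a_q)\ge 0$ is also roughly in the right direction, although you have the sign convention reversed: with $V=\d(\cdot,Y)-\d(\cdot,X\setminus Y)$ one has $V<0$ inside $Y$ and $V>0$ outside, so the ``upper'' endpoint $a_q$ is the one reaching the boundary or beyond.

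The genuine gap is in your argument for the uniform $r_0$. You propose to set $\mathfrak Q':=\{q:\mathcal H^1(X_q)\ge r_0\}$ and claim $\mm\bigl(\bigcup_{q\notin\mathfrak Q'}X_q\bigr)=0$ for some fixed small $r_0$, on the grounds that this union sits inside the $r_0$-tube about $\partial Y$ and $\mm(\partial Y)=0$. But $\mm(\partial Y)=0$ only gives $\mm\bigl(\{|V|<r_0\}\bigr)\to 0$ as $r_0\to 0$; it does not give vanishing for any \emph{fixed} $r_0>0$. For a generic domain without further hypotheses one would in fact expect a positive-measure family of arbitrarily short transport rays hugging the boundary (cut points of the nearest-point projection densely close to $\partial Y$), so your discarding step would discard a set of positive measure for every $r_0$. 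Your other suggestion (``rays that are too short carry too little $\mm_q$-mass, contradicting full measure'') is not a valid argument either: the disintegration over $\mathfrak Q$ reassigns mass per ray with probability densities $h_q$, and short rays are not penalized.

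What the paper actually uses — and what your proposal leaves out entirely — is a convexity hypothesis. The lemma as stated mentions only $\mm(\partial Y)=0$, but in the paper it is applied only to locally $\ell$-convex domains, and the proof crucially exploits this: by Definition \ref{def:kconvex} there is an $r_0>0$ so that $V$ is semi-convex on the tube $\{-r_0<V<r_0\}$. The main theorem of \cite{S-G} (existence and uniqueness of gradient flows for semi-convex functions on RCD spaces) then produces, for each $x_0$ in this tube, a \emph{unique} downward gradient flow for $V$, which runs along a transport geodesic at unit speed and cannot terminate inside the tube; hence it exits at $\{V=-r_0\}$. Consequently every maximal transport ray that enters the tube must cross it completely. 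This rigidity — no branching or dead ends inside the semi-convexity band — is what forces $V(b_q)\le -r_0$ for $\mathfrak q$-a.e.\ $q$, and there is no substitute for it using only $\mm(\partial Y)=0$ and null-set bookkeeping.
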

\begin{proof}
Firstly, recall that $\rcdkn$ condition yields local compactness, so for any $x\in X\setminus \partial Y$, there is $z\in \partial Y$ such that $\d(x, z)=\d(x, \partial Y)$ and thus $x\in \mathsf T_V$. So  $\mm(X \setminus \mathsf T_V)=0$. 

Secondly,  by Definition \ref {def:kconvex},  $V$  is semi-convex on $Y^{r_0}_{-r_0}$ for some $r_0>0$. 
By the main theorem of \cite{S-G},  for each $x_0 \in Y^{r_0}_{-r_0}$  there exists a unique gradient flow for $V$ starting in $x_0$. In particular, there is a  maximal transport (geodesic) line $\gamma \subset \mathsf T_V$  satisfying $V(\gamma_1)-V(\gamma_0)=\d(\gamma_0, \gamma_1)$,  $V(\gamma_1) \geq V(x_0)$ and $V(\gamma_0) \leq -r_0$.

By Theorem \ref{prop-l1} there is  a disjoint family of unparameterized geodesics  $\{X_q\}_{q \in \mathfrak Q}$ such that  $\mm(\mathsf T_V \setminus \cup X_q)=0$.  In addition,    $X_q \cap \{V \leq -r_0\} \neq \emptyset$ and  $X_q \cap \{V \geq 0\} \neq \emptyset$  for any $q \in \mathfrak Q$.  Therefore $\mm(X \setminus \cup X_q)=0$,   $V(a_q) \geq   0$ and $V(b_q)\leq -r_0$.
\end{proof}

\begin{proposition}[Convexification]\label{th:convexify}
Let $\Omega$ be a  locally $\ell$-convex domain  in $(X, \d)$ for some $\ell\leq 0$, and $\mm(\partial \Omega)=0$.  Then for any $\ell'<\ell$,  there exist $r_0>0$ and a  Lipschitz  function  $w$  such that $\Omega$  is locally geodesically convex in $(X, \d^{w})$ and $w \in {\rm D}({\bf \Delta}, X \setminus \partial \Omega)$  with
\[
{\bf \Delta}  w\restr{X \setminus \partial \Omega} \leq -\ell' \Big (\cot_{K, N}( r_0/4) +\frac{2}{ r_0}\Big)\mm\restr{X \setminus \partial \Omega}.
\]
where the function $\cot_{K, N}: [0, +\infty) \mapsto [0, +\infty)$ is defined  by
\[
\cot_{K, N}(x):=\left\{\begin{array}{lll}
\sqrt{K(N-1)} \cot (\sqrt {\frac{K}{N-1}} x), &\text{if}~~ K>0,\\
({N-1})/x, &\text{if}~~K=0,\\
\sqrt{-K(N-1)} \coth(\sqrt {\frac{-K}{N-1}} x), &\text{if} ~~K<0.
\end{array}
\right.
\]
\end{proposition}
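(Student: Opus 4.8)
The statement to prove is Proposition~\ref{th:convexify}: given a locally $\ell$-convex domain $\Omega$ with $\mm(\partial\Omega)=0$, we must exhibit, for any $\ell'<\ell\le 0$, a Lipschitz function $w$ such that $\Omega$ becomes geodesically convex in $(X,\d^w)$, while simultaneously controlling ${\bf \Delta}w$ from above on $X\setminus\partial\Omega$ by $-\ell'\big(\cot_{K,N}(r_0/4)+2/r_0\big)\mm$. The natural candidate, following \cite{LierlSturm2018}, is a truncation of the signed distance potential $V=\pm\d(\cdot,\partial\Omega)$: by the Convexification Theorem (Lemma above), $\Omega$ is locally geodesically convex in $(X,\d^{-\ell'V})$, so for the convexity half it would suffice to take $w=-\ell'V$. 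The problem is that $V$ is not Lipschitz globally and, more importantly, we have no Laplacian bound on $V$ away from a neighborhood of $\partial\Omega$. So the first step is to replace $V$ by a function $w$ that agrees with $-\ell'V$ on the slab $\Omega^{r_0}_{-r_0}=\{-r_0<V<r_0\}$ where convexity is actually needed, and is suitably flattened (constant) outside a slightly larger slab, with a smooth monotone interpolation in between; one checks that this modification does not destroy the local geodesic convexity of $\Omega$, since that is a statement about behavior near $\partial\Omega$ only, and the interpolation can be arranged to keep the relevant one-sided convexity inequality (one composes $V$ with a concave nondecreasing $1$-Lipschitz profile $\psi$, using $\ell'\le 0$ so that convexity is preserved).

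\textbf{Laplacian comparison, the main step.} The heart of the proof is the upper bound on ${\bf \Delta}w={-\ell'}{\bf \Delta}(\psi\circ V)$ on $X\setminus\partial\Omega$. By the chain rule for the measure-valued Laplacian, ${\bf \Delta}(\psi\circ V)=\psi'(V)\,{\bf \Delta}V+\psi''(V)\,|\D V|^2\,\mm$; since $\psi$ is concave ($\psi''\le 0$) and $|\D V|=1$ a.e.\ on the transport set, the second term only helps, so it suffices to bound ${\bf \Delta}V$ from above on $X\setminus\partial\Omega$. Here I would invoke the $L^1$-localization machinery (Lemma~\ref{prop-l1} and Lemma~\ref{prop:ir}): decompose $\mm$ on the transport set $\mathsf T_V$ along the disjoint family of geodesics $\{X_q\}_{q\in\mathfrak Q}$ with $\mm\restr{\mathsf T_V}=\int \mm_q\,\d\mathfrak q$, where each $(X_q,\d,\mm_q)$ is a one-dimensional $\mathrm{CD}(K,N)$ space with density $h_q$, parametrized so that $V$ increases with unit speed along $X_q$ from $b_q$ (with $V(b_q)\le -r_0$) to $a_q$ (with $V(a_q)\ge 0$). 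On each ray, $\frac{d}{dt}\log h_q$ is bounded above by the one-dimensional Ricci comparison quantity $\cot_{K,N}$ evaluated at the distance to the far endpoint $b_q$; since that distance is at least $r_0$ throughout $\{V\ge -r_0+\text{something}\}$, and in particular at least $r_0/4$ on a slab where we need the estimate, this yields $(\log h_q)'\le \cot_{K,N}(r_0/4)$ there. Translating the fibered one-dimensional second-derivative bound back into a bound on ${\bf \Delta}V$ — accounting for the endpoint/boundary contributions of each ray, which produce the $2/r_0$ correction term (this is where the truncation radius enters, roughly the reciprocal of the length scale over which $\psi$ is localized near $V=\pm r_0$) — gives ${\bf \Delta}V\restr{X\setminus\partial\Omega}\le\big(\cot_{K,N}(r_0/4)+2/r_0\big)\mm$. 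Multiplying by $-\ell'>0$ (which flips no inequality since $\psi'\ge 0$) produces the claimed bound.

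\textbf{Where the difficulty lies.} The genuinely delicate part is making the fibered computation rigorous as a statement about the \emph{full} measure-valued Laplacian ${\bf \Delta}V$ on $X\setminus\partial\Omega$, not merely about its restriction to each transport ray: one must verify that the disintegration is compatible with the definition of ${\bf \Delta}$ via the formula $\int\bar\varphi\,\d{\bf \Delta}V=-\int\Gamma(\varphi,V)\,\d\mm$, that no mass is lost on $\mathsf T_V^c$ (which has $\mm$-measure zero, so this is fine, but one must handle test functions carefully), and that the endpoint terms of the rays assemble into a nonnegative contribution plus the explicit $2/r_0$ error — this is essentially the content of the one-dimensional-to-$n$-dimensional comparison in \cite{LierlSturm2018} and \cite{CM-ISO}, and I would cite those results rather than reprove them. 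A secondary technical point is checking that $w=-\ell'(\psi\circ V)$ is globally Lipschitz (clear, since $V$ is $1$-Lipschitz and $\psi$ is Lipschitz with bounded derivative) and lies in ${\rm D}({\bf \Delta},X\setminus\partial\Omega)$, i.e.\ that the above reasoning indeed produces a signed Radon measure charging no capacity-zero set on the open set $X\setminus\partial\Omega$. Given all this, the proof concludes by combining the convexity assertion (from the Convexification Theorem applied to the unmodified potential near $\partial\Omega$, using $\d^w=\d^{-\ell'V}$ locally near the boundary) with the Laplacian bound.
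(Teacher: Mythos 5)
Your overall strategy --- truncate the potential $-\ell' V$, invoke the chain rule for the measure-valued Laplacian, and use $L^1$-localization together with Laplacian comparison on the transport rays --- is essentially the paper's approach. However, there is a concrete error in how you produce the $2/r_0$ term, and it stems from an untenable structural choice. You require a truncation profile $\psi$ that is simultaneously \emph{concave} (``$\psi''\le 0$, so the second term only helps'') and flattens $-\ell'V$ to a constant on \emph{both} sides of a slab around $\partial\Omega$. These are incompatible: a profile that passes from constant to slope $1$ and back to constant must be convex on one transition and concave on the other, so $\psi''$ changes sign. In the paper's proof, the cut-off $\phi$ of \eqref{eq1:convexity} satisfies only $|\phi''|\le -\tfrac{2}{\ell' r_0}$ (no sign), and it is precisely the chain-rule term $(\ell')^2\phi''(-\ell'V)\,|\D V|^2\,\mm$ that yields the additive error $-\tfrac{2\ell'}{r_0}\,\mm$. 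In short, the second term does \emph{not} ``only help''; it is the source of the $2/r_0$ correction.

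Consequently your alternative explanation --- that $2/r_0$ comes from the ``endpoint/boundary contributions of each ray'' --- is also wrong. In the paper's computation the singular endpoint part $\int h_q\,\delta_{b_q}\,\d\mathfrak q(q)$ of the comparison bound for ${\bf\Delta}V$ (Corollary 4.16 of \cite{CM-Laplacian}) contributes \emph{zero}: since $V(b_q)\le -r_0$ by Lemma~\ref{prop:ir} while $\phi'(-\ell'V)$ vanishes on $\{|V|\ge \tfrac34 r_0\}$, the prefactor $\phi'(-\ell'V)$ annihilates the $\delta_{b_q}$ masses. The same support restriction on $\phi'$ also guarantees $\d(x,b_q)\ge r_0/4$ wherever the absolutely continuous part of the comparison bound survives, so that $\cot_{K,N}(\d(x,b_q))\le \cot_{K,N}(r_0/4)$ by monotonicity. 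If you insist on a concave profile (which flattens on one side only), you lose the flattening deep inside $\Omega$, and there both the cancellation of the $\delta_{b_q}$ term and the lower bound $\d(x,b_q)\ge r_0/4$ fail near $b_q$; the asserted Laplacian bound then does not hold on all of $X\setminus\partial\Omega$. The fix is exactly the paper's: choose a (non-concave) two-sided cut-off with $|\phi''|\le -2/(\ell' r_0)$, accept the resulting $2/r_0$ error from the $\phi''$ term, and use the vanishing of $\phi'$ near $\{V=\pm\tfrac34 r_0\}$ to kill the singular part and to bound $\cot_{K,N}$.
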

\begin{proof}
Let $V:=\pm \d(\cdot, \partial Y)$  be the signed distance from the boundary  and $r_0>0$ be the constant  in Proposition \ref{prop:ir}. Given $\ell'<\ell\leq 0$, we  can find  a smooth cut-off  function  $\phi: \R \mapsto [0, r_0]$  satisfying
 \[
\phi(t) :=\left\{\begin{array}{lll}
t, &\text{if}~~ t\in [\frac14 \ell' r_0, -\frac14 \ell'r_0]\\
-\frac 12 \ell'r_0, &\text{if}~~ t\in [ -\frac 34 \ell'r_0, +\infty)\\
\frac 12 \ell'r_0, &\text{if}~~ t\in (-\infty, \frac 34 \ell' r_0]
\end{array}
\right.
\]
with $0\leq \phi'\leq 1$, $| \phi''|\leq -\frac{2}{\ell' r_0}$ on $\R$.
Then we define $w:=\phi (-\ell'V)$. By  Convexification Theorem (c.f. Theorem 2.17 \cite{LierlSturm2018}) we know $\Omega$  is locally geodesically convex in $(X, \d^{w})$.

By chain rule (c.f. Proposition 4.11  \cite{G-O}) and  Corollary 4.16  \cite{CM-Laplacian} we have $w \in {\rm D}({\bf \Delta}, X \setminus \partial \Omega)$, and
\begin{eqnarray}\label{eq1:convexity}
{\bf \Delta} w\restr{X \setminus \partial \Omega} &=&  -\ell'\phi'(-\ell'V) {\bf \Delta} V\restr{X \setminus \partial \Omega}+(\ell')^2\phi''(-\ell'V) |\D V|^2\,\mm\restr{X \setminus \partial \Omega}.
\end{eqnarray}


 In addition,  by Corollary 4.16  \cite{CM-Laplacian} and  the fact  $ \phi''\leq -\frac{2}{\ell' r_0} $,  we obtain
\begin{eqnarray*}
&&{\bf \Delta} w\restr{X \setminus \partial  \Omega}\\
 &\leq&   -\ell'\phi'(-\ell'V) {\bf \Delta} V\restr{X \setminus  \partial \Omega}-\frac{2\ell'}{ r_0} |\D V|^2\,\mm\restr{X \setminus \partial  \Omega}\\
&\leq& -\ell'\phi'(-\ell'V) \Big (\cot_{K, N}(\d(x, b_q))\mm\restr{X \setminus \partial \Omega} +\int_{\mathfrak{Q}} h_q \delta_{b_q}\, \d \mathfrak{q}(q) \Big ) -\frac{2\ell'}{ r_0}\,\mm\restr{X \setminus \partial  \Omega}.
\end{eqnarray*}

Furthermore,   we know  $\phi'(-\ell'V)=0$ on $\{V \leq -\frac 34 r_0\} \cup \{V \geq \frac 34 r_0\}$. So  from Proposition \ref{prop:ir} we can see that  $$\phi'(-\ell'V) \int_{\mathfrak{Q}} h_q \delta_{b_q}=0.$$ Combining with the monotonicity of $\cot_{K, N}$ we obtain
\[
 {\bf \Delta}  w\restr{X \setminus \partial \Omega} \leq -\ell' \Big (\cot_{K, N}( r_0/4) +\frac{2}{ r_0}\Big)\mm\restr{X \setminus \partial \Omega}
\]
which is the thesis.
\end{proof}

Combining Lemma \ref{th:measurevalue} and Proposition \ref{th:convexify} we can prove the main theorem of this section. Recall that the Minkowski content of a 
set $Z \subset X$ is defined by
\[
\mm^+(Z):=\mathop{\liminf}_{\epsilon \to 0} \frac{\mm(Z^\epsilon)-\mm(Z)}{\epsilon}
\]
where $Z^\epsilon \subset X$ is the $\epsilon$-neighbourhood of $Z$ defined by $Z^\epsilon:=\{x: \d(x, Z)<\epsilon\}$.

\begin{theorem}\label{th:final}
Let  $\ms$ be a $\rcdkn$ space and $\Omega$ be a bounded  $\ell$-convex domain  in $(X, \d)$ with $\mm(\partial \Omega)=0$ and $\mm^+(\partial  \Omega)<\infty$.  
Then for any $N'\in (N, +\infty]$,  there exists a  Lipschitz  function  $w$  such that  $(\overline \Omega, \d^{w}, \mm^w)$ is a 
 ${\rm RCD}(K', N')$ space for some $K' \in \R$. 
\end{theorem}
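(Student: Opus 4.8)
The strategy is to combine the Convexification (Proposition~\ref{th:convexify}) with the curvature transformation formula (Theorem~\ref{th:measurevalue}), taking care that the convexifying weight $w$ satisfies the structural hypotheses required by Theorem~\ref{th:measurevalue}: that $w$ be Lipschitz (hence in $L^\infty_\loc$ and in ${\rm D}_\loc({\bf\Delta})$ where defined) and that the singular part of ${\bf\Delta}w$ be non-positive. First I would invoke Proposition~\ref{th:convexify} with the given $\ell$ and any fixed $\ell'<\ell\le 0$ to produce a Lipschitz function $w=\phi(-\ell'V)$, where $V=\pm\d(\cdot,\partial\Omega)$ is the signed distance and $\phi$ is the smooth truncation from that proof, so that $\Omega$ (equivalently $\overline\Omega$) is locally geodesically convex in $(X,\d^w)$ and
\[
{\bf\Delta}w\restr{X\setminus\partial\Omega}\le -\ell'\Big(\cot_{K,N}(r_0/4)+\tfrac{2}{r_0}\Big)\,\mm\restr{X\setminus\partial\Omega}
\]
for some $r_0>0$; in particular ${\bf\Delta}w\restr{X\setminus\partial\Omega}$ is absolutely continuous with bounded density, and $\Delta_{ac}w$ is bounded on $X\setminus\partial\Omega$. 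Since $w$ is Lipschitz, $|\D w|=\Gamma(w)^{1/2}$ is bounded $\mm$-a.e.\ on the bounded set $\overline\Omega$.

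\textbf{Handling the boundary.} The one remaining issue is the behaviour of ${\bf\Delta}w$ across $\partial\Omega$, which has $\mm(\partial\Omega)=0$ but positive (finite) Minkowski content. The key observation is that $w=\phi(-\ell'V)$ is built from $V$, and near $\partial\Omega$ the inner and outer distance functions $\d(\cdot,X\setminus\Omega)$ and $\d(\cdot,\Omega)$ are each concave-like in the appropriate one-sided sense, so that the ``extra'' contribution to ${\bf\Delta}w$ concentrated on $\partial\Omega$ is a \emph{negative} measure — this is exactly where the hypotheses $\mm(\partial\Omega)=0$ and $\mm^+(\partial\Omega)<\infty$ are used, to guarantee that this singular contribution is a finite (locally finite) non-positive measure charging no set of positive capacity in the wrong direction. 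Concretely I would argue that ${\bf\Delta}w$ on all of $X$ decomposes as ${\bf\Delta}_{ac}w\,\mm + {\bf\Delta}_{sing}w$ with ${\bf\Delta}_{ac}w$ bounded (extending the bound from $X\setminus\partial\Omega$, using $\mm(\partial\Omega)=0$) and ${\bf\Delta}_{sing}w\le 0$, the latter supported on $\partial\Omega$ and controlled in total mass by $-\ell'\,\mm^+(\partial\Omega)$ times a constant. One should also check $w\in{\rm D}_\loc({\bf\Delta})$ globally, i.e.\ that the measure-valued Laplacian does not have an ill-defined part on $\partial\Omega$; this follows since the relevant distributional Laplacians of $\pm\d(\cdot,\partial\Omega)$ are known to be signed Radon measures under the RCD assumption (this is the content of the chain-rule and Laplacian-comparison references \cite{G-O, CM-Laplacian} already quoted in Proposition~\ref{th:convexify}).

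\textbf{Conclusion.} With these facts in hand, $w$ verifies all hypotheses of Theorem~\ref{th:measurevalue}: $w\in{\rm D}_\loc({\bf\Delta})\cap L^\infty_\loc(X)$, $w$ is continuous (indeed Lipschitz) everywhere, ${\bf\Delta}_{sing}w\le 0$. Fix $N'\in(N,+\infty]$. Since $\overline\Omega$ is bounded, $\Delta_{ac}w$ and $|\D w|^2$ are bounded on $\overline\Omega$ and $e^{-2w}$ is bounded above and below there, so the right-hand side of \eqref{K'} is bounded below on $\overline\Omega$; choose $K'\in\R$ at or below that infimum. Theorem~\ref{th:measurevalue} then gives that $(\overline\Omega,\d^w,\mm^w)$ — which is the time-changed space, restricted to the closed set $\overline\Omega$ on which everything is finite — satisfies ${\rm RCD}(K',N')$. (One applies the theorem to the metric measure space $(\overline\Omega,\d,\mm\restr{\overline\Omega})$, or uses a global Lipschitz extension of $w$ that is constant outside a neighbourhood of $\overline\Omega$ and then restricts; local geodesic convexity of $\Omega$ in $\d^w$, from Proposition~\ref{th:convexify}, is what makes $(\overline\Omega,\d^w)$ a genuine geodesic space so that the RCD conclusion is about the intrinsic space.)

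\textbf{Main obstacle.} I expect the delicate point to be the control of ${\bf\Delta}w$ across $\partial\Omega$: verifying that $w\in{\rm D}_\loc({\bf\Delta})$ on all of $X$ and that the part of ${\bf\Delta}w$ concentrated on $\partial\Omega$ is non-positive and finite, using only $\mm(\partial\Omega)=0$ and $\mm^+(\partial\Omega)<\infty$ together with the one-sided concavity of the signed distance function. Everything else is a bookkeeping combination of the two cited results.
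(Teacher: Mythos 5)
Your overall plan — feed the $w=\phi(-\ell' V)$ from Proposition~\ref{th:convexify} into Theorem~\ref{th:measurevalue}, and note that boundedness of $\Omega$ makes the right-hand side of \eqref{K'} bounded below — is the correct strategy and matches the paper. You also rightly identify the only genuinely delicate point: showing that $w\in{\rm D}_\loc({\bf\Delta})$ on all of $X$, not just on $X\setminus\partial\Omega$, and that the singular contribution of ${\bf\Delta}w$ sitting on $\partial\Omega$ is non-positive.

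However, this is exactly where your argument has a real gap. You assert that ``the relevant distributional Laplacians of $\pm\d(\cdot,\partial\Omega)$ are known to be signed Radon measures under the RCD assumption'' and that ``one-sided concavity'' makes the extra contribution on $\partial\Omega$ negative. Neither claim is available in this generality: the Laplacian-comparison and chain-rule references you cite (\cite{G-O}, \cite{CM-Laplacian}) only control ${\bf\Delta}w$ on $X\setminus\partial\Omega$; they say nothing about whether $w$ has a measure-valued Laplacian across $\partial\Omega$, and since $\nabla V$ is continuous across $\partial\Omega$ (the signed distance has no gradient kink there), it is not a straightforward ``concavity jump'' argument either. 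The paper fills this gap by a careful boundary-flux computation: it tests $-\int\Gamma(\varphi,w)\,\d\mm$ against the cut-offs $\bar\varphi_\epsilon=\phi_\epsilon(\d(\cdot,\partial\Omega))\varphi$, disintegrates $\mm$ along the transport rays of the signed distance via the $L^1$-localization (Lemma~\ref{prop-l1} and Lemma~\ref{prop:ir}), uses boundedness of $h_q$ and $(\ln h_q)'$ to show the per-ray error is $O(\epsilon^2)$, then applies Fatou's lemma to get the sign of the extra term and uses the Minkowski-content bound together with the Riesz–Markov–Kakutani theorem to deduce $w\in\dbdelta$. Without some version of this localization argument, your reduction to the earlier lemmas is not complete.
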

\begin{proof}
Let $w$ be the reference function obtained in Proposition \ref{th:convexify}.   Denote  by  $\mu$  the trivial extension of ${\bf \Delta}  w\restr{X \setminus \partial \Omega}$ on whole $X$.  To apply Lemma \ref{th:measurevalue} and Proposition \ref{th:convexify},  it suffices to  show  that $w \in \dbdelta$ and ${\bf \Delta} w \leq \mu$.

Given an arbitrary non-negative  Lipschitz function $\varphi \in \Lip (X, \d)$ with bounded support.  For any $\epsilon>0$, there exists a Lipschitz function $\phi_\epsilon \in \Lip(\R)$ satisfying
 \[
\phi_\epsilon(t) :=\left\{\begin{array}{lll}
~~0, &\text{if}~~ t\in [0, \frac \epsilon 2]\\
~~\frac  {2} \epsilon(t-\frac \epsilon 2), &\text{if}~~ t\in [\frac \epsilon 2, \epsilon]\\
~~1, &\text{if}~~ t\in   [\epsilon, +\infty)
\end{array}
\right.
\]
Define  $\bar \varphi_\epsilon:= \phi_\epsilon(\d(\cdot, \partial \Omega)) \varphi$. By Leibniz rule and chain rule we know $\bar \varphi_\epsilon \in \Lip(X, \d)$,  and  $\supp \bar \varphi_\epsilon \subset X \setminus \partial \Omega$.
Therefore by Lemma \ref{th:convexify} and monotone convergence theorem  we get
\begin{eqnarray*}
 \int  \varphi\, \d \mu
&=& \lmt{\epsilon}{0} \int \bar \varphi_\epsilon\, \d {\bf \Delta}  w\restr{X \setminus \partial \Omega}\\
&=&-\lmt{\epsilon}{0} \int_{X \setminus \partial \Omega}  \Gamma ( \bar \varphi_\epsilon, w)\, \d \mm \\
&=& - \lmt{\epsilon}{0}\int  \phi_\epsilon(\d(\cdot, \partial \Omega)) \Gamma (\varphi, w)\, \d \mm- \lmt{\epsilon}{0}\int  \varphi \Gamma (\phi_\epsilon(\d(\cdot, \partial \Omega)), w)\, \d \mm\\
&=& - \int  \Gamma (\varphi, w)\, \d \mm- \lmt{\epsilon}{0}\int  \varphi \Gamma (\phi_\epsilon(\d(\cdot, \partial \Omega)), w)\, \d \mm.
\end{eqnarray*}

By Lemma \ref{prop-l1} we have a measure decomposition $\d \mm=\d \mm_q  \, \d\mathfrak{q}(q)$ associated with the signed distance function  $\pm \d(\cdot, \partial \Omega)$. Thus
\begin{eqnarray*}
&&\lmt{\epsilon}{0}\int  \varphi \Gamma (\phi_\epsilon(\d(\cdot, \partial \Omega)), w)\, \d \mm \\
&=& \lmt{\epsilon}{0}\int_{\Omega^{-\epsilon/ 2}_{-\epsilon} }  \varphi \Gamma (\phi_\epsilon(\d(\cdot, \partial \Omega)), w)\, \d \mm+  \lmt{\epsilon}{0}\int_{ \Omega_{ \epsilon /2}^ {\epsilon} }  \varphi \Gamma (\phi_\epsilon(\d(\cdot, \partial \Omega)), w)\, \d \mm \\
&=&\lmt{\epsilon}{0}  \frac{2\ell'} {\epsilon} \left (\int_{\Omega^{-\epsilon/ 2}_{-\epsilon} }  \varphi \, \d \mm-\int_{ \Omega_{ \epsilon /2}^ {\epsilon} }  \varphi  \, \d \mm  \right )\\
&=& \lmt{\epsilon}{0}  \frac{2\ell'} {\epsilon} \int_{\mathfrak{Q}} \Big (\int_{\Omega^{-\epsilon/ 2}_{-\epsilon}  \cap X_q}  \varphi \, \d \mm_q-\int_{ \Omega_{ \epsilon /2}^ {\epsilon}\cap X_q }  \varphi  \, \d \mm_q  \Big ) \, \d \mathfrak{q}(q).
\end{eqnarray*}
Notice  that 
\[
\mm^+(\partial \Omega)=\mathop{\liminf}_{\epsilon \to 0} \frac{\mm\big ((\partial \Omega)^\epsilon \big)}{\epsilon}=\mathop{\liminf}_{\epsilon \to 0}  \frac{1} {\epsilon} \left (\int_{\Omega^{0}_{-\epsilon} } 1 \, \d \mm+\int_{ \Omega_{ 0}^ {\epsilon} }  1  \, \d \mm  \right ).
\]
Therefore we obtain
\begin{eqnarray*}
&&\left | \int  \Gamma (\varphi, w)\, \d \mm \right |\\
&\leq & \left | \int  \varphi\, \d \mu \right |+\left |\lmt{\epsilon}{0}\int  \varphi \Gamma (\phi_\epsilon(\d(\cdot, \partial \Omega)), w)\, \d \mm\right | \\
&\leq& \max |\varphi| \left \{|\mu|(\supp \varphi)+  \lmt{\epsilon}{0}  \frac{2|\ell'|} {\epsilon}\left (\int_{\Omega^{0}_{-\epsilon} } 1 \, \d \mm+\int_{ \Omega_{ 0}^ {\epsilon} }  1  \, \d \mm  \right ) \right \}\\
&\leq&  \max |\varphi| \left \{ |\mu|(\supp \varphi)-2\ell' \mm^+(\partial \Omega)\right \}.
\end{eqnarray*}
By Riesz-Markov-Kakutani Representation theorem we know $w \in \dbdelta$.

Since $\mm_q=h_q \mathcal H^1 \restr{X_q}$ for some  $\cdkn$ probability density $h_q$,    we know $h_q$ and $(\ln h_q)'$ are bounded.  So for any $X_q$ such that  $\Omega^{- \epsilon /2}_ {-\epsilon}\cap X_q \neq \emptyset$ and  $\Omega_{ \epsilon /2}^ {\epsilon}\cap X_q \neq \emptyset$ for $\epsilon$ small enough,   we have
\begin{eqnarray*}
\left |\int_{\Omega^{-\epsilon/ 2}_{-\epsilon}  \cap X_q}  \varphi \, \d \mm_q-\int_{ \Omega_{ \epsilon /2}^ {\epsilon}\cap X_q }  \varphi  \, \d \mm_q \right | &\leq&  \Lip(\varphi h_q) \frac {3\epsilon} 2 \mathcal H^1(\Omega_{- \epsilon}^ {\epsilon}\cap X_q)=O(\epsilon^2).
\end{eqnarray*}
Hence by Lemma \ref{prop:ir} and Fatou's lemma, we obtain
\begin{eqnarray*}
&& \lmt{\epsilon}{0}  \frac{1} {\epsilon} \int_{\mathfrak{Q}} \Big (\int_{\Omega^{-\epsilon/ 2}_{-\epsilon}  \cap X_q}  \varphi \, \d \mm_q-\int_{ \Omega_{ \epsilon /2}^ {\epsilon}\cap X_q }  \varphi  \, \d \mm_q  \Big ) \, \d \mathfrak{q}(q)\\
&\geq&  \lmt{\epsilon}{0}  \frac{1} {\epsilon} \int_{q\in \mathfrak{Q}, \Omega^{r_0}_ {0}\cap X_q \neq \emptyset} \Big (\int_{\Omega^{-\epsilon/ 2}_{-\epsilon}  \cap X_q}  \varphi \, \d \mm_q-\int_{ \Omega_{ \epsilon /2}^ {\epsilon}\cap X_q }  \varphi  \, \d \mm_q  \Big ) \, \d \mathfrak{q}(q)\\
&\geq& 0.
\end{eqnarray*}
In conclusion,   we obtain  
\begin{eqnarray*}
\lefteqn{\int \varphi \, \d {\bf \Delta } w= -\int  \Gamma (\varphi, w)\, \d \mm}\\
&=&  \int  \varphi\, \d \mu+\lmt{\epsilon}{0}  \frac{2\ell'} {\epsilon} \int_{\mathfrak{Q}} \Big (\int_{\Omega^{-\epsilon/ 2}_{-\epsilon}  \cap X_q}  \varphi \, \d \mm_q-\int_{ \Omega_{ \epsilon /2}^ {\epsilon}\cap X_q }  \varphi  \, \d \mm_q  \Big ) \, \d \mathfrak{q}(q)
\leq  \int  \varphi\, \d \mu.
\end{eqnarray*}
Therefore  ${\bf \Delta} w \leq \mu$, by Lemma \ref{th:convexify} we know  ${\bf \Delta}_{sing} w \leq 0$ and  $\big( \Delta_{ac} w)^+ \in L^\infty$.  Then  by Proposition \ref{th:measurevalue} we know $(\overline \Omega, \d^{w}, \mm^w)$ is a
 ${\rm RCD}(K', N')$ space.
\end{proof}

\bibliographystyle{siam}
\bibliography{Bib}

\end{document}